\documentclass[11pt,a4paper,twoside,leqno]{amsart}

\usepackage[a4paper,textwidth=16cm,textheight=21cm,centering]{geometry}
\usepackage[colorlinks, citecolor=blue,pagebackref,hypertexnames=false]{hyperref}
\usepackage{amssymb}
\usepackage{amsmath}
\usepackage{mathrsfs}
\usepackage{amsthm}
\usepackage{amsfonts}
\usepackage{mathtools}
\usepackage{newtxtext}
\usepackage{newtxmath}
\usepackage{enumerate}
\usepackage{enumitem}
\usepackage{latexsym}
\usepackage{color}
\usepackage{graphicx}
\usepackage{microtype}
\usepackage{indentfirst}
\usepackage{hyperref}
\usepackage{tikz}
\usepackage{float}
\usetikzlibrary{
	arrows.meta,
	calc,
	angles,
	quotes,
	positioning,
	decorations.pathreplacing
}
\hypersetup{colorlinks=false,pdfborder={0 0 0}}
\makeatletter
\def\subsection{\@startsection{subsection}{2}%
	\z@{.7\linespacing\@plus.2\linespacing}%
	{.5\linespacing}%
	{\normalfont\bfseries}}
\makeatother
\allowdisplaybreaks
\newtheoremstyle{boldplain}%
{3pt}{3pt}{\itshape}{}%
{\bfseries\upshape}{.}{0.5em}%
{\thmname{\bfseries #1}\thmnumber{\bfseries\ #2}%
 \thmnote{\bfseries\ (#3)}}

\newtheoremstyle{bolddefinition}%
{3pt}{3pt}{\normalfont}{}%
{\bfseries\upshape}{.}{0.5em}%
{\thmname{\bfseries #1}\thmnumber{\bfseries\ #2}%
 \thmnote{\bfseries\ (#3)}}

\theoremstyle{boldplain}
\newtheorem{theorem}{Theorem}[section]
\newtheorem{proposition}[theorem]{Proposition}
\newtheorem{lemma}[theorem]{Lemma}

\theoremstyle{bolddefinition}
\newtheorem{definition}[theorem]{Definition}
\newtheorem{remark}[theorem]{Remark}

\numberwithin{equation}{section}
\newtheorem{question}{Question}[section]
\newcommand{\R}{\mathbb R}
\newcommand{\Sph}{\mathbb S^{n-1}}
\newcommand{\sig}{\sigma}
\newcommand{\1}{\mathbf 1}
\newcommand{\dd}{\,d}
\newcommand{\abs}[1]{\left|#1\right|}
\newcommand{\norm}[1]{\left\lVert#1\right\rVert}
\newcommand{\Hc}{\mathcal H^{(n-1)/2}_{1}}
\newcommand{\Cspace}{\mathcal H\mathcal C}
\newcommand{\Aspace}{\mathcal A}
\newcommand{\Xspace}{\mathcal X}

\graphicspath{{figures/}}

\begin{document}
\arraycolsep=1pt

\title[Hausdorff--Choquet Angular Spaces]
{Hausdorff--Choquet Angular Spaces and Weak-Type $(1,1)$ Bounds for Rough Maximal Operators}

\author{Yanping Chen$^\ast$}
\address{Department of Mathematics,
	Northeastern University;
	State Key Laboratory of Synthetical Automation for Process Industries,
	Shenyang 110004, China}
\email{yanpingch@126.com}

\author{Zhengyang Ji}
\address{Department of Mathematics,
	Northeastern University,
	Shenyang 110004, China}
\email{zhengyang.ji.zjut@gmail.com}

\thanks{Yanping Chen is the corresponding author. The research was partially supported by the National Natural Science Foundation of China ( Grant No.  12525105 and 12371092)}

\subjclass[2010]{42B20, 42B25}
\keywords{weak  type  $(1,1)$ bounds;  the maximal operator; Hausdorff Content ; Choquet Integrals; rough kernel}
 
\begin{abstract}
	In the present paper, we consider the maximal operator
	\[
	\mathcal M_{\Omega}f(x)
	:=
	\sup_{r>0}\frac{1}{r^n}
	\int_{|y|<r}
	|f(x-y)|
	\left|
	\Omega\!\left(\frac{y}{|y|}\right)
	\right|\,dy.
	\]
	A longstanding open conjecture raised by E.~M.~Stein asks whether
	the maximal operator $\mathcal M_{\Omega}$ is of weak type $(1,1)$
	when $\Omega$ is merely in $L^1(\mathbb S^{n-1})$. We partially settle this problem by proving weak type $(1,1)$ bounds of
	$\mathcal M_{\Omega}$ with kernel
	$\Omega\in\mathcal X(\mathbb S^{n-1})$,
	yielding a significant improvement over the work of M.~Christ and Rubio de Francia.
	Here
	$\mathcal X(\mathbb S^{n-1})$ is the space related to the Hausdorff--Choquet angular space and $$
	L\log^+\!L(\mathbb S^{n-1})
	\subsetneq
	\mathcal X(\mathbb S^{n-1})\subset L^1(\mathbb S^{n-1}).
	$$
	
	Finally, for the general Hausdorff--Choquet scale
	$\Cspace_{\alpha}$, we show that
	$\alpha=(n-1)/2$ is the sharp exponent for uniform weak type $(1,1)$
	estimates.
\end{abstract}

\maketitle

\section{Introduction}\label{sec:intro}

In recent decades, a broad class of operators related to the Calderón–Zygmund singular integrals, yet lacking the smoothness assumptions imposed in the classical theory, has attracted substantial research attention. Investigations into the boundedness of rough singular integral operators date back to the seminal work of Calder\'on and Zygmund \cite{CalderonZygmund1956}, in which the authors first introduced singular integral operators with homogeneous kernels.

Let $\R^n$ be the $n$-dimensional Euclidean space of dimension $n\ge2$ and
\(
 \Sph=\{x\in\R^n:|x|=1\},
\)
the unit sphere in $\R^n$. Let $\sig$ denote the normalized surface measure on $\Sph$. Let $\Omega$ be a measurable function on $\Sph$ and be homogeneous of degree $0$ on $\Sph$. 
Consider the maximal function
\begin{equation}\label{eq:rough-maximal}
 \mathcal M_{\Omega}f(x)
 :=
 \sup_{r>0}r^{-n}
 \int_{|y|<r}|f(x-y)|\,|\Omega(y)|\dd y,
\end{equation}
for $f$ bounded and measurable with compact support in $\R^n$.

It is a consequence of the method of rotations of Calder\'on and Zygmund \cite{CalderonZygmund1956} that $\mathcal M_{\Omega}$ extends to an operator bounded on $L^p(\R^n)$ for all $p>1$, provided $\Omega\in L^1(\Sph)$. $\mathcal M_{\Omega}$ is a variant of the Hardy--Littlewood maximal function, so one might hope that it is of weak type $(1,1)$. Results of this type cannot be obtained by the method of rotations, since weak $L^1$ fails to be a normed linear space. In \cite{Fefferman1978}, R.~Fefferman proved that \(\mathcal M_\Omega\) is of weak type
\((1,1)\) whenever the kernel \(\Omega\) has finite
\(L^1\)-entropy. He also showed that the \(L^1\)-Dini condition
implies finite \(L^1\)-entropy; see also
\cite[Chapter~II, Section~5.19, p.~83]{Stein1993}.
 In a paper of S. Hudson \cite{Hudson1987}, who, in the case $n=2$, showed $\mathcal M_{\Omega}$ to be of weak type $(1,1)$ under the minimal size condition $\Omega\in L^1(\mathbb S^1)$, but assuming that $\Omega$ is monotone and assuming an extra technical hypothesis on $\Omega$. In \cite{Christ1988}, M. Christ used the $T T^*$ method to show that $\mathcal M_{\Omega}$ is of weak type $(1,1)$ in $\R^2$ whenever $\Omega\in L^q(\mathbb S^1)$ for some $q>1$. In 1988, M. Christ and Rubio de Francia \cite{ChristRubio1988} applied a variant of the geometric method and the $T T^*$ method to prove the weak type $(1,1)$ bounds for the maximal operator $\mathcal M_{\Omega}$ whenever $\Omega\in L\log^+\! L(\Sph)$. Now let us summarize the dominant result as follows.

\medskip
\noindent{\bf Theorem A (Christ and Rubio de Francia \cite{ChristRubio1988}).}
Let $\Omega\in L\log^+\! L(\Sph)$. Then $\mathcal M_{\Omega}$ is of weak type $(1,1)$, that is, for any $\lambda>0$,
\[
 \lambda\,|\{x\in\R^n:\mathcal M_{\Omega}f(x)>\lambda\}|
 \lesssim_n
 \bigl(1+\|\Omega\|_{L\log^+\! L})\bigr)\|f\|_1,
\]
where
\[
 \|\Omega\|_{L\log^+\! L}
 :=
 \int_{\Sph}|\Omega(\theta)|\log\!\bigl(2+|\Omega(\theta)|\bigr)\dd\sig(\theta).
\]

M.~Christ and Rubio de Francia \cite{ChristRubio1988} specially mentioned that it is not
known whether the conclusion is valid for all $\Omega\in L^1(\Sph)$. In particular,
E.~M.~Stein explicitly raised this question in his book \cite{Stein1993}. \begin{question}\label{q:Stein-weak-type}
	 Whether $M_\Omega$ is of weak type $(1,1)$ when  $\Omega$ is merely in $L^1(\Bbb S^{n-1})$?
\end{question}This long-standing open problem has been repeatedly emphasized by E. M. Stein in another work \cite[p1139]{St1}, which is also addressed in Grafakos' textbook \cite{Grafakos2014Classical}, the 1999 survey by Grafakos and Stefanov \cite[Question~8]{GrafakosStefanov1999}, and the 2001 monograph written by Duoandikoetxea \cite[p88]{Duoandikoetxea2001}.

This long-standing endpoint problem has remained open for more than
three decades and the full
$L^1(\Sph)$ problem still open.

In this paper, we give a partial answer to this open problem. More precisely, we construct a kernel space $\Xspace(\Sph)$ such that
\begin{equation}\label{eq:Y-goal-intro}
 L\log^+\! L(\Sph)\subsetneq\Xspace(\Sph)\subset L^1(\Sph),
\end{equation}
and prove that $\mathcal M_{\Omega}$ is of weak type $(1,1)$ for every $\Omega\in\Xspace(\Sph)$.

The construction of $\mathcal X(\mathbb S^{n-1})$ is motivated by the desire to measure not only  the size of the kernel, but also its
geometric concentration on the sphere. This naturally leads to Hausdorff contents and  Choquet integrals, which play a
well-established role in potential theory and harmonic analysis.
Adams investigated Choquet integration with respect to Hausdorff capacity and derived maximal‑function estimates within this capacitary framework
\cite{Adams1988Choquet}; see also his later survey
\cite{Adams1998Choquet}. Later, Orobitg and Verdera established a new family  of inequalities relating Hausdorff content, Choquet integrals, and the Hardy–Littlewood maximal operator
\cite{OrobitgVerdera1998}. More recently,  the theory of capacitary maximal-function
 has been extended to wider classes of Hausdorff contents
and outer capacities \cite{BasakChenRoychowdhurySpector2025}.

By contrast, Hausdorff–Choquet quantities assume a different role in the present paper. Rather than using Hausdorff content as an ambient capacity
on $\mathbb R^n$, we use the associated Choquet integral to measure
the geometric concentration of the kernel on $\mathbb S^{n-1}$.

We now introduce the definitions of some spaces. For $0<\alpha\le n-1$ and $E\subset\Sph$, define the unit-scale Hausdorff content by
\[
 \mathcal H_1^\alpha(E)
 :=
 \inf\left\{
 \sum_j r_j^\alpha:
 E\subset\bigcup_jB(\theta_j,r_j),\quad 0<r_j\le1
 \right\},
\]
where $B(\theta,r)$ denotes a geodesic ball. For a nonnegative measurable function $u$ on $\Sph$, define the associated Choquet integral by
\[
 \int_{\Sph}u\dd\mathcal H_1^\alpha
 :=
 \int_0^\infty\mathcal H_1^\alpha(\{u>t\})\dd t.
\]
For $\alpha=(n-1)/2$, set
\begin{equation}\label{eq:rhoC-intro}
 \rho_{\Cspace}(\Omega)
 :=
 \inf_{\widetilde\Omega=\Omega\ {\rm a.e.}\,[\sig]}
 \int_{\Sph}|\widetilde\Omega|^{1/2}\dd\mathcal H_1^{(n-1)/2},
 \qquad
 \|\Omega\|_{\Cspace}:=\rho_{\Cspace}(\Omega)^2,
\end{equation}
and define
\[
 \Cspace(\Sph):=\{\Omega:\rho_{\Cspace}(\Omega)<\infty\}.
\]
We call $\Cspace(\Sph)$ the \emph{Hausdorff--Choquet angular space}.

Our first result gives an atomic description of this space. For a geodesic ball $B\subset\Sph$, write
\[
 u_B:=\frac{\1_B}{\sig(B)}.
\]

\begin{definition}[Geodesic-ball atomic space]\label{def:atomic}
A measurable function $a$ is called a geodesic-ball atom if there exists a geodesic ball $B\subset\Sph$ such that
\[
 \abs a\le u_B.
\]
Define the geodesic-ball atomic space $\Aspace(\Sph)$  as follows.
\begin{equation}\label{eq:atomic-majorant-norm}
 \norm{\Omega}_{\Aspace}
 :=
 \inf
 \left\{
 \left(\sum_j\sqrt{c_j}\right)^2:
 \abs\Omega\le\sum_jc_ju_{B_j}\ {\rm a.e.},\
 c_j\ge0
 \right\}.
\end{equation}
\end{definition}
 
\begin{theorem}\label{thm:atomic-main}
The spaces $\Cspace(\Sph)$ and $\Aspace(\Sph)$ coincide with equivalent quasi-norms; namely,
\[
 \|\Omega\|_{\Aspace}\asymp_n\|\Omega\|_{\Cspace}.
\]
Moreover, $\Cspace(\Sph)$ is a complete quasi-Banach space, and $\|\cdot\|_{\Aspace}$ is an equivalent $1/2$-norm.
\end{theorem}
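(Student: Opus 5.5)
The plan is to compare both quasi-norms through the pointwise function $u:=|\Omega|^{1/2}$ and the elementary fact that for a geodesic ball $B=B(\theta,r)$ with $0<r\le 1$ one has $\sig(B)\asymp_n r^{n-1}=(r^{\alpha})^2$, where $\alpha=(n-1)/2$. This identity is what makes the exponent $1/2$ and the dimension $(n-1)/2$ match: for a single atom, $\sqrt{c\,\sig(B)^{-1}}\,\mathcal H^\alpha_1(B)\asymp\sqrt c$. Balls of radius larger than $1$ can always be replaced by $O_n(1)$ balls of radius $1$, at the cost of a dimensional constant.

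\textbf{Step 1: $\|\Omega\|_{\Aspace}\lesssim\|\Omega\|_{\Cspace}$.} Fix a representative $\widetilde\Omega$ that nearly attains the infimum in \eqref{eq:rhoC-intro}, and set $u=|\widetilde\Omega|^{1/2}$. Put $E_k=\{u>2^k\}$ for $k\in\mathbb Z$, so that
\[
\int u\,d\mathcal H^\alpha_1\asymp\sum_k 2^k\mathcal H^\alpha_1(E_k).
\]
For each $k$, choose a cover $E_k\subset\bigcup_j B_{k,j}$ with radii $r_{k,j}\le1$ and $\sum_j r_{k,j}^\alpha\le 2\mathcal H^\alpha_1(E_k)$. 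Then everywhere
\[
|\widetilde\Omega|\le\sum_k 4^{k+1}\1_{E_k\setminus E_{k+1}}\le\sum_{k,j}c_{k,j}\,u_{B_{k,j}},\qquad c_{k,j}:=4^{k+1}\sig(B_{k,j}).
\]
Since $\sqrt{c_{k,j}}\asymp 2^k r_{k,j}^\alpha$, summing gives $\sum\sqrt{c_{k,j}}\lesssim\sum_k2^k\mathcal H^\alpha_1(E_k)\lesssim\rho_{\Cspace}(\Omega)$. Squaring yields the claim.

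\textbf{Step 2: $\|\Omega\|_{\Cspace}\lesssim\|\Omega\|_{\Aspace}$.} Suppose $|\Omega|\le\sum_jc_ju_{B_j}$ a.e. Define $\widetilde\Omega$ to equal $\Omega$ off the exceptional null set and $0$ on it. Concavity of the square root gives, everywhere,
\[
|\widetilde\Omega|^{1/2}\le\sum_j\sqrt{c_j/\sig(B_j)}\,\1_{B_j}.
\]
The Choquet integral of each summand is $\sqrt{c_j/\sig(B_j)}\,\mathcal H^\alpha_1(B_j)\lesssim\sqrt{c_j}$. It then remains to know that the Choquet integral with respect to $\mathcal H^\alpha_1$ is countably subadditive up to a constant. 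This is the main obstacle, since $\mathcal H^\alpha_1$ is not strongly subadditive.

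The first thing to try is the standard device from Adams and Orobitg--Verdera, namely a dyadic Hausdorff content. Fix a dyadic decomposition of $\Sph$ into quasi-cubes; this can be obtained by projecting dyadic cubes from the faces of a circumscribed cube, or by Christ's dyadic cubes. Then:
\begin{itemize}
\item Define $\widetilde{\mathcal H}^\alpha_1$ by restricting coverings to dyadic quasi-cubes of side at most $1$. It satisfies $\widetilde{\mathcal H}^\alpha_1\asymp_n\mathcal H^\alpha_1$.
\item $\widetilde{\mathcal H}^\alpha_1$ is strongly subadditive. The proof is the usual maximal-cube argument, which transfers verbatim.
\item Hence the Choquet integral with respect to $\widetilde{\mathcal H}^\alpha_1$ is sublinear and monotonically continuous, and so countably subadditive.
\end{itemize}
This gives $\rho_{\Cspace}(\Omega)\lesssim\sum_j\sqrt{c_j}$. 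Taking the infimum over representations and squaring proves the claim.

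\textbf{Step 3: quasi-norm properties and completeness.} We have $\|\lambda\Omega\|_{\Aspace}=|\lambda|\,\|\Omega\|_{\Aspace}$. Concatenating near-optimal representations of $\Omega_1$ and $\Omega_2$ gives $\|\Omega_1+\Omega_2\|_{\Aspace}^{1/2}\le\|\Omega_1\|_{\Aspace}^{1/2}+\|\Omega_2\|_{\Aspace}^{1/2}$, so $\|\cdot\|_{\Aspace}$ is a $1/2$-norm in this sense. Since $\|u_B\|_{L^1(\sig)}=1$ and $\sum c_j\le(\sum\sqrt{c_j})^2$, we also get $\|\Omega\|_{L^1}\le\|\Omega\|_{\Aspace}$, which gives definiteness modulo null functions.

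For completeness it suffices to show the following: if $\sum_k\|\Omega_k\|_{\Aspace}^{1/2}<\infty$, then $\sum_k\Omega_k$ converges in $\Aspace$.
\begin{itemize}
\item The series converges in $L^1$ and a.e., because of the $L^1$ bound above.
\item Its limit is dominated by the union of the chosen atomic majorants.
\item Applying the same reasoning to the tails gives $\bigl\|\sum_{k>N}\Omega_k\bigr\|_{\Aspace}^{1/2}\le\sum_{k>N}\|\Omega_k\|_{\Aspace}^{1/2}\to0$.
\end{itemize}
By the equivalence established in Steps 1 and 2, completeness then transfers from $\Aspace(\Sph)$ to $\Cspace(\Sph)$.
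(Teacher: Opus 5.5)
Your proposal is correct, and its skeleton is the paper's. Step~1 is Proposition~\ref{prop:ball-majorant}: your layers $\{|\widetilde\Omega|^{1/2}>2^k\}$ are the paper's layers $\{|\omega|>4^k\}$ reindexed. Step~2 is Proposition~\ref{prop:ball-majorant-converse}. Step~3 is the paper's completeness argument, except that you apply the summable-series criterion directly to majorants. The paper instead goes through exact atomic decompositions (Proposition~\ref{prop:exact-atoms}) and a Cauchy subsequence, so your version is a little cleaner.

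The real difference is how you get countable quasi-subadditivity of the Choquet integral (Lemma~\ref{lem:choquet-subadd}). The paper builds no dyadic structure on $\Sph$. It proves $\mathcal H^\alpha_1(E)\asymp_n\mathcal H^\alpha_{\infty,\R^n}(E)$ for $E\subset\Sph$, using chordal versus geodesic distance for Euclidean balls of radius at most $1/\pi$ and a fixed cover by $N_n$ geodesic balls of radius $1/2$ for the larger ones. It then quotes the Euclidean estimate of Chen--Ooi--Spector. Your route is intrinsic: a dyadic content on the sphere, the Yang--Yuan maximal-cube proof of strong subadditivity, and Choquet's sublinearity theorem. It is self-contained and would work on any Ahlfors-regular space carrying a genuine nested partition. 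The paper's route is shorter because it leaves exactly that dyadic machinery to the Euclidean literature.

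Three details need tightening.

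(i) A cover with $\sum_j r_{k,j}^\alpha\le 2\mathcal H^\alpha_1(E_k)$ need not exist. If $E_k\neq\varnothing$ has zero content (a single point, say), no cover achieves it. Use an additive slack $\varepsilon_k$ with $\sum_k 2^k\varepsilon_k\le\varepsilon$, as the paper does.

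(ii) Christ's cubes exhaust $\Sph$ only up to a $\sig$-null set, and such a set may carry positive $\mathcal H^\alpha_1$-content. For $n=3$ a great-circle arc already does. The leftover points then cannot be covered, and the comparison $\mathcal H^\alpha_1\asymp\widetilde{\mathcal H}^\alpha_1$ breaks. Stick to the projected half-open cubes, which give a genuine nested partition.

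(iii) To pass from finite to countable sums you need continuity of $\widetilde{\mathcal H}^\alpha_1$ along increasing sequences of arbitrary sets. This does not follow from strong subadditivity, as your ``hence'' suggests. It is the separate increasing-sets lemma for net contents (as in Rogers' \emph{Hausdorff Measures}), and you should cite or prove it.
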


To place our new space within the context of classical  kernel spaces, we further prove the following.
\begin{theorem}\label{thm:comparison-main}
There are continuous embeddings
\[
 L^\infty(\Sph)\subset\Cspace(\Sph)
 \subsetneq L^{1,1/2}(\Sph)\subset L^1(\Sph).
\]
Moreover, there exists a nonnegative kernel $\Omega_*\in\Cspace(\Sph)$ such that
\[
 \Omega_*\notin L\log^+\! L(\Sph),
 \qquad
 \Omega_*\notin L^p(\Sph)
 \quad\text{for every }p>1.
\]
\end{theorem}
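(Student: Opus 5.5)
The plan is to use the atomic description from Theorem~\ref{thm:atomic-main} for upper bounds, and the Choquet definition \eqref{eq:rhoC-intro} for lower bounds. Throughout, $\alpha=(n-1)/2$ and $\sig(B(\theta,r))\asymp_n r^{n-1}=r^{2\alpha}$ for $r\le 1$.

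\emph{The embedding $L^\infty\subset\Cspace$.} Cover $\Sph$ by $N_n$ geodesic balls $B_1,\dots,B_{N_n}$ of radius $1$. Then
\[
|\Omega|\le\|\Omega\|_\infty\sum_j\sig(B_j)\,u_{B_j}.
\]
Hence $\|\Omega\|_{\Aspace}\le\bigl(\sum_j\sig(B_j)^{1/2}\bigr)^2\|\Omega\|_\infty\lesssim_n\|\Omega\|_\infty$, and Theorem~\ref{thm:atomic-main} gives the bound for $\|\Omega\|_{\Cspace}$.

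\emph{The embedding $\Cspace\subset L^{1,1/2}$.} I would first show $\sig(E)\lesssim_n\mathcal H^\alpha_1(E)^2$ for every $E\subset\Sph$. Given a cover by balls $B(\theta_j,r_j)$ with $r_j\le1$,
\[
\sig(E)\le\sum_j\sig(B(\theta_j,r_j))\lesssim\sum_j r_j^{2\alpha}\le\Bigl(\sum_j r_j^\alpha\Bigr)^2 .
\]
For any representative $\widetilde\Omega$, apply this to the level sets and substitute $t=s^{1/2}$:
\[
\int|\widetilde\Omega|^{1/2}\dd\mathcal H^\alpha_1\gtrsim\int_0^\infty\sig(\{|\Omega|^{1/2}>t\})^{1/2}\dd t=\tfrac12\int_0^\infty\sig(\{|\Omega|>s\})^{1/2}s^{-1/2}\dd s .
\]
The last integral is comparable to $\int_0^\infty (t\,\Omega^*(t))^{1/2}\,dt/t$ by the standard distribution-function form of the Lorentz quasi-norm. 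Squaring gives $\|\Omega\|_{L^{1,1/2}}\lesssim\|\Omega\|_{\Cspace}$. The inclusion $L^{1,1/2}\subset L^1$ is the usual monotonicity of Lorentz spaces in the second index.

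\emph{Strictness of $\Cspace\subsetneq L^{1,1/2}$.} This is the step I expect to be the main obstacle, because it needs a genuine lower bound on a Hausdorff content.
\begin{itemize}
\item \emph{Building block.} Take $M$ points $\theta_1,\dots,\theta_M$ that are roughly uniformly spread, with spacing $\asymp M^{-1/(n-1)}$. Choose $\delta$ with $M\delta^\alpha=1$; then $\delta\ll M^{-1/(n-1)}$, so the balls $B(\theta_i,\delta)$ are disjoint. Let $E$ be their union and put $g_M=\1_E/\sig(E)$.
\item \emph{Lorentz side.} For any set $E$, $\|\1_E/\sig(E)\|_{L^{1,1/2}}$ is an absolute constant, as for a single atom.
\item \emph{Choquet side.} Here $\int g_M^{1/2}\dd\mathcal H^\alpha_1=\sig(E)^{-1/2}\mathcal H^\alpha_1(E)$. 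To show $\mathcal H^\alpha_1(E)\gtrsim1$, I would use a mass-distribution (Frostman) argument. Put mass $1/M$ uniformly on each small ball. One checks that this measure $\mu$ satisfies $\mu(B(\theta,\rho))\lesssim\rho^\alpha$ for all $\rho\le 1$: for $\rho\le\delta$ this uses $M\delta^\alpha=1$, and for larger $\rho$ it uses the counting bound $\#\{i:\theta_i\in B(\theta,2\rho)\}\lesssim 1+M\rho^{n-1}$ together with $\alpha<n-1$. Then any admissible cover satisfies $1=\mu(E)\lesssim\sum_j r_j^\alpha$.
\item \emph{Blow-up.} Since $\sig(E)\asymp M\delta^{n-1}=M\delta^{2\alpha}=M^{-1}$, we get $\|g_M\|_{\Cspace}\gtrsim M$ while $\|g_M\|_{L^{1,1/2}}\lesssim1$.
\item \emph{Gluing.} Set $\Omega=\sum_k k^{-4}g_{M_k}$ with $M_k=k^{10}$. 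Up to constants $\|\cdot\|_{L^{1,1/2}}^{1/2}$ is subadditive, so $\Omega\in L^{1,1/2}$ because $\sum_k k^{-2}<\infty$. On the other hand, the Choquet integral is monotone in nonnegative integrands, so $\|\Omega\|_{\Cspace}\ge k^{-4}\|g_{M_k}\|_{\Cspace}\gtrsim k^{6}\to\infty$, hence $\Omega\notin\Cspace$.
\end{itemize}

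\emph{The kernel $\Omega_*$.} Take pairwise disjoint geodesic balls $B_j$ accumulating at a point, with $c_j=j^{-3}$ and $\sig(B_j)=e^{-j^4}$, and set $\Omega_*=\sum_j c_ju_{B_j}$.
\begin{itemize}
\item Since $\sum_j\sqrt{c_j}<\infty$, we have $\Omega_*\in\Aspace=\Cspace$ by Theorem~\ref{thm:atomic-main}.
\item By disjointness, $\|\Omega_*\|_{L\log^+L}\ge\sum_jc_j\log(c_j/\sig(B_j))\approx\sum_j j^{-3}(j^4-3\log j)=\infty$.
\item For $p>1$, $\int\Omega_*^p\dd\sig\ge\sum_j c_j^p\sig(B_j)^{1-p}=\sum_j j^{-3p}e^{(p-1)j^4}=\infty$, so $\Omega_*\notin L^p$.
\end{itemize}
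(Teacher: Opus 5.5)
Your proposal is correct and follows essentially the paper's route. It uses the same level-set comparison $\Hc(E)\gtrsim_n\sig(E)^{1/2}$ for $\Cspace\subset L^{1,1/2}$, the same Frostman-measure block of $M$ disjoint balls of radius $\asymp(\text{spacing})^2$ glued over scales for strictness, and the same sparse-ball kernel $\sum_j c_ju_{B_j}$ for $\Omega_*$. Your gluing of normalized blocks relies on countable subadditivity of $\|\cdot\|_{L^{1,1/2}}^{1/2}$, which does hold (the distribution form is a Choquet integral against the submodular set function $\sig^{1/2}$); the paper instead computes the distribution function of $\sup_k 2^k\1_{E_k}$ directly.

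The one step you should state explicitly is that $\rho_{\Cspace}$ is an infimum over $\sig$-a.e.\ representatives, so you need $\Hc(E\setminus N)\gtrsim_n 1$ for every $\sig$-null set $N$, not just $\Hc(E)\gtrsim_n 1$. Your argument already gives this, because your Frostman measure is absolutely continuous with respect to $\sig$, exactly as the paper observes for its measure $\nu_r$.
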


Our  main  results  can be stated as follows.

\begin{theorem}\label{thm:main}
There exists a constant $C_n$, depending only on $n$, such that for every $\Omega\in\Cspace(\Sph)$, every $f\in L^1(\R^n)$, and every $\lambda>0$,
\begin{equation}\label{eq:main-weak-intro}
 |\{x\in\R^n:\mathcal M_{\Omega}f(x)>\lambda\}|
 \le
 \frac{C_n}{\lambda}\|\Omega\|_{\Cspace}\|f\|_1.
\end{equation}
\end{theorem}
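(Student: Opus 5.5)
The plan is to reduce Theorem~\ref{thm:main} to a uniform estimate for single atoms, using the atomic description in Theorem~\ref{thm:atomic-main}, and then to recombine the atoms by a weighted splitting of the level set that matches the $1/2$-norm exactly.

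\emph{Step 1: reduction to atoms.} By Theorem~\ref{thm:atomic-main} we can choose $c_j\ge0$ and geodesic balls $B_j$ with $|\Omega|\le\sum_j c_j u_{B_j}$ a.e. and $S^2:=\bigl(\sum_j\sqrt{c_j}\bigr)^2\le 2C_n\|\Omega\|_{\Cspace}$. Since $\mathcal M_\Omega f$ depends only on $|\Omega|$ and is monotone in it, and the supremum of a sum is at most the sum of the suprema,
\[
\mathcal M_\Omega f\le\sum_j c_j\,\mathcal M_{u_{B_j}}f .
\]

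\emph{Step 2: recombining the atoms.} Weak $L^1$ is not normed, so the pieces cannot simply be added. Instead we split the threshold in proportion to $\sqrt{c_j}$. If $\sum_j c_j\mathcal M_{u_{B_j}}f(x)>\lambda$, then for some $j$,
\[
c_j\mathcal M_{u_{B_j}}f(x)>\lambda\sqrt{c_j}/S ,
\]
since otherwise summing these bounds gives at most $\lambda$. Suppose we know the uniform bound
\[
|\{\mathcal M_{u_B}f>t\}|\le A_n\|f\|_1/t \qquad\text{for every geodesic ball } B .
\]
Then
\[
|\{\mathcal M_\Omega f>\lambda\}|\le\sum_j \frac{A_n c_j S\|f\|_1}{\lambda\sqrt{c_j}}=\frac{A_nS^2\|f\|_1}{\lambda}\lesssim_n\frac{\|\Omega\|_{\Cspace}\|f\|_1}{\lambda}.
\]
This is exactly why the $1/2$-power structure of $\|\cdot\|_{\Aspace}$ is the right one. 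Everything therefore reduces to the uniform estimate for a single atom.

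\emph{Step 3: uniform weak $(1,1)$ for one atom; this is the main point.} Let $B=B(\theta_0,\delta)$. If $\delta\ge c_0$ for a small dimensional constant $c_0$, then $u_B\lesssim_n1$, so $\mathcal M_{u_B}f\lesssim_n M_{HL}f$ and we are done. If $\delta<c_0$, set
\[
K:=\{y:|y|<1,\ y/|y|\in B\}.
\]
This is a convex sector (an aperture below $\pi/2$ suffices) with $|K|\asymp_n\delta^{n-1}\asymp_n\sigma(B)$. By polar coordinates,
\[
\mathcal M_{u_B}f(x)=\sup_{r>0}\frac{1}{r^n\sigma(B)}\int_{rK}|f(x-y)|\,dy\lesssim_n\sup_{r>0}\frac{1}{|x-rK|}\int_{x-rK}|f| .
\]
So $\mathcal M_{u_B}f$ is dominated by the maximal average over translated dilates of one fixed convex body.

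By John's theorem there is an ellipsoid $E=c+T(\text{unit ball})$ with $E-c\subset K-c\subset n(E-c)$. Hence $x-rK$ lies in the ellipsoid $x-rc-rnT(\text{unit ball})$, whose volume is at most $n^n|rK|$, and which contains $x$ because $0\in\overline K$. Therefore
\[
\mathcal M_{u_B}f(x)\lesssim_n \widetilde M_T f(x),
\]
where $\widetilde M_T$ is the uncentered maximal function over ellipsoids $z+sT(\text{unit ball})$ containing $x$. The substitution $x=Tw$ conjugates $\widetilde M_T$ to the uncentered Hardy--Littlewood maximal function. Weak $(1,1)$ bounds are invariant under this substitution because the Jacobian factors cancel. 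Hence $A_n$ is independent of $\delta$ and $\theta_0$.

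The delicate point I expect is making the comparison $|K|\asymp\sigma(B)$ and the convexity of $K$ uniform in $\delta$, together with the measurability and a.e.\ issues in the atomic decomposition. The core geometric input, uniformity over caps via affine invariance, should go through as described.
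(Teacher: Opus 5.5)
Your proposal is correct and follows the paper's route. You majorize $|\Omega|$ by geodesic-ball atoms via Theorem~\ref{thm:atomic-main} (the paper cites Proposition~\ref{prop:ball-majorant} directly), split the threshold as $\lambda\sqrt{c_j}/S$, and prove the uniform single-ball bound by an anisotropic linear change of variables whose Jacobians cancel. The only difference is cosmetic: you get the normalizing map from John's ellipsoid of the convex sector and the uncentered Hardy--Littlewood operator, whereas the paper writes down the cylinder $Q(B,R)\supset\Gamma(B,R)$ and the map $L_B$ explicitly and then applies Lemma~\ref{lem:fixed-shape}.
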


The second assertion of Theorem \ref{thm:comparison-main} shows that the Hausdorff--Choquet condition admits kernels beyond the classical $L\log^+\! L$ class. It is important, however, to distinguish this statement from the stronger inclusion $L\log^+\! L(\Sph)\subset\Cspace(\Sph)$, which is not asserted here. To construct  a space that contains the  entire $L\log^+\! L$ space, define
\[
 \Xspace(\Sph):=L\log^+\! L(\Sph)+\Cspace(\Sph).
\]
For later use, we employ the homogeneous Luxemburg norm
associated with the Young function
$\Phi(t)=t\log(2+t)$; see, for example,
\cite[Chapter~IV, Section~8]{BennettSharpley1988}:
\begin{equation}\label{eq:LlogL-lux}
	\|\Omega\|_{L\log^+L}
	:=
	\inf\left\{
	a>0:
	\int_{\Sph}
	\frac{|\Omega(\theta)|}{a}
	\log\!\left(2+\frac{|\Omega(\theta)|}{a}\right)
	\,d\sigma(\theta)\le1
	\right\}.
\end{equation}
and define
\begin{equation}\label{eq:X-control-main}
 \mathcal N_{\Xspace}(\Omega)
 :=
 \inf_{\substack{\Omega=\Omega_1+\Omega_2\\
                  \Omega_1\in L\log^+\! L(\Sph)\\
                  \Omega_2\in\Cspace(\Sph)}}
 \left(
 \|\Omega_1\|_{L\log^+\! L}+\|\Omega_2\|_{\Cspace}
 \right).
\end{equation}
The classical $L\log^+\! L$ estimate and Theorem \ref{thm:main} imply the following.

\begin{theorem}\label{thm:bridge}
There exists a constant $C_n$, depending only on $n$, such that for every $\Omega\in\Xspace(\Sph)$, every $f\in L^1(\R^n)$, and every $\lambda>0$,
\begin{equation}\label{eq:X-weak-main}
 |\{x\in\R^n:\mathcal M_{\Omega}f(x)>\lambda\}|
 \le
 \frac{C_n}{\lambda}\,
 \mathcal N_{\Xspace}(\Omega)\|f\|_1.
\end{equation}
Moreover,
\[
 L\log^+\! L(\Sph)\subsetneq\Xspace(\Sph)\subset L^1(\Sph).
\]
\end{theorem}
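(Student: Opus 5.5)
The plan is to obtain the weak-type bound from a sublinearity argument in the kernel, and the inclusions from Theorem~\ref{thm:comparison-main}.

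\textbf{Weak-type bound.} Given $\Omega\in\Xspace(\Sph)$, fix a near-optimal decomposition $\Omega=\Omega_1+\Omega_2$ with $\Omega_1\in L\log^+\! L(\Sph)$, $\Omega_2\in\Cspace(\Sph)$ and
\[
\|\Omega_1\|_{L\log^+\! L}+\|\Omega_2\|_{\Cspace}\le 2\mathcal N_{\Xspace}(\Omega).
\]
Since $|\Omega|\le|\Omega_1|+|\Omega_2|$, we have the pointwise bound $\mathcal M_\Omega f\le \mathcal M_{\Omega_1}f+\mathcal M_{\Omega_2}f$. Hence
\[
\{\mathcal M_\Omega f>\lambda\}\subset\{\mathcal M_{\Omega_1}f>\lambda/2\}\cup\{\mathcal M_{\Omega_2}f>\lambda/2\}.
\]
Theorem~\ref{thm:main} controls the second set by $2C_n\lambda^{-1}\|\Omega_2\|_{\Cspace}\|f\|_1$.

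For the first set I need Theorem~A in \emph{homogeneous} form, i.e.\ with constant $\lesssim \|\Omega_1\|_{L\log^+\! L}$ in the Luxemburg norm~\eqref{eq:LlogL-lux}. Theorem~A as stated gives $1+\int|\Omega|\log(2+|\Omega|)$, so I homogenize by scaling. Assume $a:=\|\Omega_1\|_{L\log^+\! L}>0$; the case $\Omega_1=0$ a.e.\ is trivial. Then $\mathcal M_{\Omega_1}f=a\,\mathcal M_{\Omega_1/a}f$. By the definition of the Luxemburg norm,
\[
\int\frac{|\Omega_1|}{a}\log\!\Bigl(2+\frac{|\Omega_1|}{a}\Bigr)\,d\sigma\le1,
\]
using monotone convergence as $a'\downarrow a$. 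Theorem~A applied to $\Omega_1/a$ at level $\lambda/(2a)$ therefore gives
\[
|\{\mathcal M_{\Omega_1}f>\lambda/2\}|\lesssim_n \frac{2a}{\lambda}\cdot 2\,\|f\|_1.
\]
Adding the two estimates yields~\eqref{eq:X-weak-main}. This scaling step is the only subtle point, and it is routine.

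\textbf{Inclusions.} For $\Xspace\subset L^1$, note $L\log^+\! L\subset L^1$ trivially, and $\Cspace\subset L^{1,1/2}\subset L^1$ by Theorem~\ref{thm:comparison-main}; sums stay in $L^1$. For $L\log^+\! L\subset\Xspace$, take $\Omega_2=0$. For strictness, use the kernel $\Omega_*\in\Cspace(\Sph)\setminus L\log^+\! L(\Sph)$ from Theorem~\ref{thm:comparison-main}: it lies in $\Xspace$ via the decomposition $0+\Omega_*$, but not in $L\log^+\! L$.
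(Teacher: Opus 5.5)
Your proposal is correct and follows the paper's proof essentially step for step. Both arguments use the pointwise bound $\mathcal M_\Omega f\le\mathcal M_{\Omega_1}f+\mathcal M_{\Omega_2}f$, the split at $\lambda/2$, the Luxemburg rescaling of Theorem A (which the paper isolates as Lemma \ref{lem:classical-llogl}), Theorem \ref{thm:main}, and the kernel $\Omega_*$ for strictness. The only cosmetic difference is that you fix a near-optimal decomposition with constant $2\mathcal N_{\Xspace}(\Omega)$, which in the case $\mathcal N_{\Xspace}(\Omega)=0$ needs the one-line remark that then $\Omega=0$ a.e.; the paper avoids this by working with an arbitrary decomposition and taking the infimum at the end.
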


Theorem \ref{thm:bridge} gives a partial answer to the question raised by E.M. Stein (see also Question \ref{q:Stein-weak-type}): the classical $L\log^+\! L$ sufficient condition is strictly widened, while the resulting kernel space remains contained in $L^1(\Sph)$.

Finally, we consider the more general Hausdorff--Choquet scale. For $0<\alpha\le n-1$, define
\[
 \rho_{\Cspace_\alpha}(\Omega)
 :=
 \inf_{\omega=\Omega\ {\rm a.e.}\,[\sig]}
 \int_{\Sph}|\omega|^{1/2}\dd\mathcal H_1^\alpha,
 \qquad
 \|\Omega\|_{\Cspace_\alpha}:=\rho_{\Cspace_\alpha}(\Omega)^2.
\]
Then $\Cspace_{(n-1)/2}=\Cspace$, and the exponent $(n-1)/2$ is sharp.

\begin{theorem}\label{thm:alpha-main}
Let $0<\alpha\le n-1$. A uniform estimate
\[
 |\{x\in\R^n:\mathcal M_{\Omega}f(x)>\lambda\}|
 \le
 \frac{C_n}{\lambda}\|\Omega\|_{\Cspace_\alpha}\|f\|_1
\]
for all nonnegative measurable $\Omega$, all $f\in L^1(\R^n)$, and all $\lambda>0$ holds if and only if
\[
 0<\alpha\le\frac{n-1}{2}.
\]
In particular, the sharp exponent is
\[
 \alpha_c=\frac{n-1}{2}.
\]
\end{theorem}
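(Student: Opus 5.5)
Sufficiency will come from Theorem~\ref{thm:main} by monotonicity of Hausdorff content in~$\alpha$. Necessity will come from testing the estimate on narrow-cap kernels.

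\emph{Sufficiency.} Let $0<\alpha\le (n-1)/2$. Every ball in the definition of $\mathcal H_1^\alpha$ has radius $r_j\le1$, so $r_j^{(n-1)/2}\le r_j^{\alpha}$. Hence $\mathcal H_1^{(n-1)/2}(E)\le \mathcal H_1^\alpha(E)$ for every $E\subset\Sph$. Integrating the level sets of $|\omega|^{1/2}$ in $t$ gives
\[
\int_{\Sph}|\omega|^{1/2}\dd\mathcal H_1^{(n-1)/2}\le\int_{\Sph}|\omega|^{1/2}\dd\mathcal H_1^{\alpha}.
\]
Taking the infimum over representatives $\omega$ yields $\|\Omega\|_{\Cspace}\le\|\Omega\|_{\Cspace_\alpha}$. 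Theorem~\ref{thm:main} then gives the uniform estimate with the same constant $C_n$.

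\emph{Necessity.} Let $\alpha>(n-1)/2$. For small $\delta>0$ set $\Omega_\delta:=u_{B(e_1,\delta)}$, so $\Omega_\delta\asymp \delta^{-(n-1)}\1_{B(e_1,\delta)}$.

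First I will bound the right-hand side. Covering the cap by the single ball $B(e_1,\delta)$ gives, for $t<\|\Omega_\delta\|_\infty^{1/2}$,
\[
\mathcal H_1^\alpha(\{\Omega_\delta^{1/2}>t\})\le\delta^\alpha,
\]
and the level set is empty for larger $t$. Hence $\rho_{\Cspace_\alpha}(\Omega_\delta)\lesssim\delta^{\alpha}\delta^{-(n-1)/2}$, and so $\|\Omega_\delta\|_{\Cspace_\alpha}\lesssim\delta^{2\alpha-(n-1)}$, which tends to $0$ as $\delta\to0$.

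Next I will show the left-hand side does not tend to zero. Take $f=\1_{B(0,1)}$ and $|x|\ge 2$ with $x/|x|$ in the cap direction $e_1$ (the sign depends on the convention $f(x-y)$). Choose $r\approx|x|+1$. The cone $\{y:y/|y|\in B(e_1,\delta)\}$, intersected with the translate $x-B(0,1)$, has measure $\gtrsim\min(1,(\delta|x|)^{n-1})$, with density $\delta^{-(n-1)}$ on it. This gives
\[
\mathcal M_{\Omega_\delta}f(x)\gtrsim |x|^{-n}\delta^{-(n-1)}\min\bigl(1,(\delta|x|)^{n-1}\bigr).
\]
For $2\le|x|\le\delta^{-1}$ the right side is $\gtrsim |x|^{-1}$, for $x$ in a cone of aperture $\sim\delta$ around the direction. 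I would rather use the full family of rotated caps, but rotation invariance is not available for a single $\Omega$, so I will bound the measure of this thin set directly: its measure is $\sim\delta^{n-1}R^{n}$ where $|x|\sim R$, and there $\mathcal M_{\Omega_\delta} f\gtrsim 1/R$. With $\lambda=c/R$ this gives
\[
\lambda|\{\mathcal M_{\Omega_\delta}f>\lambda\}|\gtrsim R^{-1}\delta^{n-1}R^n=(\delta R)^{n-1}.
\]
Choosing $R=\delta^{-1}$ makes this $\gtrsim1$, while $\|\Omega_\delta\|_{\Cspace_\alpha}\|f\|_1\lesssim\delta^{2\alpha-(n-1)}\to0$, which is the contradiction.

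The main obstacle is carrying out this geometric lower bound cleanly. One needs the region $\{|x|\sim\delta^{-1}\}$ inside a $\delta$-aperture cone to have measure $\sim\delta^{-1}$, and that is indeed $(\delta R)^{n-1}R$. Since $\|f\|_1\sim1$, the scaling is consistent with $\|u_B\|_{L^1}=1$: weak $(1,1)$ with an $L^1$-size constant is not contradicted, only the vanishing Hausdorff--Choquet constant is. I would first verify the lower bound $\mathcal M_{\Omega_\delta}f(x)\gtrsim\delta^{-(n-1)}|x|^{-n}\cdot|\text{cone}\cap(x-B(0,1))|$ carefully near $|x|\approx\delta^{-1}$, where the cone cross-section has width $\approx1$.
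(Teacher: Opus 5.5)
Your proof is correct. The sufficiency half is the same as the paper's: from $r_j\le1$ you get $r_j^{(n-1)/2}\le r_j^\alpha$, hence $\norm{\Omega}_{\Cspace}\le\norm{\Omega}_{\Cspace_\alpha}$, and then Theorem~\ref{thm:main} applies. For necessity you also use the paper's test kernels $u_{B(e_1,\delta)}$ and the same single-ball bound $\norm{u_{B(e_1,\delta)}}_{\Cspace_\alpha}\lesssim\delta^{2\alpha-(n-1)}$.

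Where you differ is in bounding the left-hand side from below, and there you take a more laborious far-field route. You work at points $|x|\sim R=\delta^{-1}$ in a cone of aperture $\sim\delta$ about $e_1$. There $B_{\R^n}(x,1)$ meets the support cone of $u_{B(e_1,\delta)}$ in a set of measure $\gtrsim1$. Hence $\mathcal M_{u_{B(e_1,\delta)}}f\gtrsim R^{-1}$ on a set of measure $\sim\delta^{n-1}R^n=R$, and at height $\lambda\sim\delta$ you get $\lambda\abs{\{\mathcal M_{u_{B(e_1,\delta)}}f>\lambda\}}\gtrsim1$. This checks out, once you restrict to $x/|x|\in B(e_1,\delta/2)$ so the intersection estimate holds off the axis.

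The paper instead works near the origin at a fixed height. With $f=\1_{B_{\R^n}(0,2)}$ and $x\in B_{\R^n}(0,1)$, the radius-$1$ average has $f(x-y)\equiv1$. So $\mathcal M_{u_r}f(x)\ge c_n\int_{\Sph}u_r\dd\sig=c_n$, and $\{\mathcal M_{u_r}f>c_n/2\}$ contains the unit ball for every $r$.

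That one-line argument needs no cone geometry. It also shows in general that any admissible constant must dominate $c_n\norm{\Omega}_{L^1(\Sph)}$ for nonnegative $\Omega$. Your computation reaches the same lower bound $\gtrsim\norm{u_B}_{L^1}=1$ with more bookkeeping; all it adds is that the thin far-field region alone already carries this weak-$L^1$ mass.

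Your own $f=\1_{B_{\R^n}(0,1)}$ would already give the simple bound: for $|x|<1/2$, averaging over $|y|<1/2$ gives $\mathcal M_{\Omega_\delta}f(x)\ge\abs{B_{\R^n}(0,1)}$.
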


The paper is organized as follows: In Section \ref{sec:prelim}, we give the required preliminaries. In Section \ref{sec:HC-structure}, we introduce the Hausdorff--Choquet angular space, establish its atomic characterization and investigate its comparison properties, and provide the proofs of Theorems \ref{thm:atomic-main}-\ref{thm:comparison-main}. In Section \ref{sec:weak-main}, we establish the single-geodesic-ball estimate, the weak type $(1,1)$ bounds, the sum-space theorem, and the sharp-exponent theorem and give the proofs of Theorems \ref{thm:main}-\ref{thm:alpha-main}.

\medskip
\noindent{\bf Notation.}\begin{enumerate}
\item Throughout the paper, $n\ge2$ is fixed. The symbol $\Sph$ denotes the unit sphere, equipped with the geodesic distance
\(
d_{\Sph}(\theta,\varphi)
:=
\arccos\langle\theta,\varphi\rangle
\in[0,\pi].
\)

\item
For $\theta\in\Sph$ and $0<r\leq1$, let
\(
B(\theta,r)
:=
\bigl\{
\varphi\in\Sph:
d_{\Sph}(\theta,\varphi)<r
\bigr\}
\)
be the open geodesic ball centered at $\theta$ with geodesic radius $r$; write $B_{\R^n}(\theta,r):=
\bigl\{
\varphi\in\Sph:
d_{\R^n}(\theta,\varphi)<r
\bigr\}$ for the open Euclidean ball centered at $\theta$ with radius $r$.
\item The  $\sigma$  denotes normalized surface measure on $\Sph$.
For a measurable set $E$, $\1_E$ denotes its indicator function.

\item For $A,B\ge0$, the notation $A\lesssim_nB$ means that $A\le CB$ for some constant $C>0$ depending only on $n$; $A\asymp_nB$ means that both $A\lesssim_nB$ and $B\lesssim_nA$ hold.

\item For open sets $U$ and $\widetilde U$, the notation $U\Subset\widetilde U$ means that $U$ is compactly contained in $\widetilde U$, that is, $\overline U$ is compact and $\overline U\subset\widetilde U$.

\item For weak $L^1$, we use the quasi-norm
\(
\norm F_{L^{1,\infty}(\R^n)}
:=
\sup_{\lambda>0}
\lambda\,\abs{\{x\in\R^n:\abs{F(x)}>\lambda\}}.
\)
The abbreviation a.e. always refers to the measure specified in the relevant context.\end{enumerate}

\section{Preliminaries}\label{sec:prelim}

\subsection{Spherical Hausdorff Content and the Choquet Integral}

\begin{lemma}\label{lem:ahlfors}
There exist constants $c_n,C_n>0$, depending only on $n$, such that for every
$\theta\in\Sph$ and $0<r\le1$,
\begin{equation}\label{eq:ball-measure}
 c_nr^{n-1}\le\sig(B(\theta,r))\le C_nr^{n-1}.
\end{equation}
\end{lemma}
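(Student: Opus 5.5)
We will reduce the estimate to an explicit integral in geodesic polar coordinates about $\theta$. By rotation invariance of $\sig$ (the normalized surface measure is invariant under $O(n)$), $\sig(B(\theta,r))$ is independent of $\theta$, so we may take $\theta=e_n$ to be the north pole. Writing points of $\Sph$ as $\varphi=\cos s\,e_n+\sin s\,\xi$ with $s=d_{\Sph}(e_n,\varphi)\in[0,\pi]$ and $\xi\in\mathbb S^{n-2}$, the surface measure factors as $\sin^{n-2}s\,ds\,d\xi$ (for $n=2$ the $\xi$-factor is counting measure on two points). Hence
\[
\sig(B(e_n,r))=\frac{|\mathbb S^{n-2}|}{|\Sph|}\int_0^r\sin^{n-2}s\,ds,
\]
where $|\cdot|$ denotes unnormalized surface area; the prefactor is a constant depending only on $n$.

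Next we bound the integrand. For $0\le s\le r\le 1<\pi/2$, concavity of $\sin$ on $[0,\pi/2]$ gives $\frac{2}{\pi}s\le\sin s\le s$; in fact on $[0,1]$ one even has $\sin s\ge(\sin 1)s$. Therefore
\[
(\sin 1)^{n-2}\frac{r^{n-1}}{n-1}\le\int_0^r\sin^{n-2}s\,ds\le\frac{r^{n-1}}{n-1}.
\]
Multiplying by the prefactor yields \eqref{eq:ball-measure} with $c_n$ and $C_n$ explicit and dependent only on $n$.

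The only point requiring care is the polar-coordinate formula for the surface measure on the sphere, which is standard: it is the coarea/spherical coordinates identity. Alternatively, if one prefers to avoid it, one can compare with Euclidean balls. For $0<r\le1$ one has $B_{\R^n}(\theta,\tfrac{2}{\pi}r)\cap\Sph\subset B(\theta,r)\subset B_{\R^n}(\theta,r)\cap\Sph$, since chord length $2\sin(s/2)$ lies between $\tfrac{2}{\pi}s$ and $s$. The spherical cap measures are then computed by projecting the cap onto the tangent plane, which is a bi-Lipschitz graph map with constants depending only on $n$ for caps of radius at most $1$. I expect no real obstacle; the polar-coordinate route is the cleanest and is the one I would write out.
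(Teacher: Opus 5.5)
Your proposal is correct and follows the paper's own proof: you use rotation invariance, the polar-coordinate formula $\sig(B(\theta,r))=c_n\int_0^r(\sin t)^{n-2}\dd t$, and $\sin t\asymp t$ on $[0,1]$. You also give explicit constants and treat the $n=2$ case, which the paper leaves implicit.
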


\begin{proof}
By rotational invariance of the unit sphere, it suffices to consider a fixed center. The measure of a geodesic ball satisfies
\[
 \sig(B(\theta,r))
 =c_n\int_0^r(\sin t)^{n-2}\dd t,
 \qquad 0<r\le1,
\]
where $c_n>0$ depends only on $n$. Since $\sin t\asymp t$ for $0\le t\le1$,
\[
 \sig(B(\theta,r))
 \asymp_n
 \int_0^r t^{n-2}\dd t
 \asymp_n r^{n-1}.
\]
\end{proof}

\begin{definition}[Unit-scale Hausdorff content]\label{def:hausdorff-content}
For $0<\alpha\le n-1$ and $E\subset\Sph$, following the restricted-scale Hausdorff content in \cite[Section 2]{Papasoglu2020}, define
\[
 \mathcal H_1^\alpha(E)
 :=\inf\left\{
 \sum_jr_j^\alpha:
 E\subset\bigcup_jB(\theta_j,r_j),\quad 0<r_j\le1
 \right\}.
\]
Throughout the paper we set $\alpha=(n-1)/2$, except in Subsection \ref{subsec:critical-alpha}, where a general $\alpha$ is treated explicitly.
\end{definition}

\begin{proposition}[Two basic estimates]\label{prop:content-estimates}
For every $\theta\in\Sph$, $0<r\le1$, and measurable set $E\subset\Sph$, one has
\begin{equation}\label{eq:ball-content}
 \Hc(B(\theta,r))\asymp_n r^{(n-1)/2},
 \qquad
 \Hc(E)\gtrsim_n\sig(E)^{1/2}.
\end{equation}
\end{proposition}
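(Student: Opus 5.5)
The plan is to prove the two estimates separately, each from the definition of $\Hc$ and Lemma \ref{lem:ahlfors}. Write $\alpha=(n-1)/2$.

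\emph{Ball estimate.} The upper bound $\Hc(B(\theta,r))\le r^{\alpha}$ is immediate, since $B(\theta,r)$ covers itself and $r\le 1$. For the lower bound, take any admissible cover $B(\theta,r)\subset\bigcup_j B(\theta_j,r_j)$ with $0<r_j\le 1$. Subadditivity of $\sig$ and Lemma \ref{lem:ahlfors} give
\[
c_n r^{n-1}\le \sig(B(\theta,r))\le \sum_j \sig(B(\theta_j,r_j))\le C_n\sum_j r_j^{n-1}.
\]
Because $r_j\le1$ and $n-1\ge\alpha$, we have $r_j^{n-1}\le r_j^{\alpha}$, but this only yields $\sum_j r_j^\alpha\gtrsim r^{n-1}$, which is too weak. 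The fix is to split the cover into two cases.
\begin{enumerate}
\item If some $r_j\ge r$, then that single term already gives $\sum_j r_j^\alpha\ge r^\alpha$.
\item Otherwise every $r_j<r$. Then $r_j^{n-1}=r_j^{\alpha}r_j^{\alpha}\le r^{\alpha}r_j^{\alpha}$, so $c_nr^{n-1}\le C_n r^{\alpha}\sum_j r_j^{\alpha}$, which gives $\sum_j r_j^\alpha\gtrsim_n r^{n-1-\alpha}=r^{\alpha}$.
\end{enumerate}
The exponent works out exactly because $\alpha=(n-1)/2$. Taking the infimum over covers gives $\Hc(B(\theta,r))\gtrsim_n r^\alpha$.

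\emph{Measure estimate.} Let $E\subset\bigcup_j B(\theta_j,r_j)$ be any admissible cover. Then
\[
\sig(E)\le C_n\sum_j r_j^{n-1}=C_n\sum_j (r_j^{\alpha})^2\le C_n\Bigl(\sum_j r_j^\alpha\Bigr)^2,
\]
where the last step uses $\ell^1\subset\ell^2$. Taking square roots and then the infimum over covers gives $\sig(E)^{1/2}\lesssim_n\Hc(E)$.

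The only delicate point is that a cover may use balls with $r_j>\pi$-type radii or centers off the ball being covered. Here this is harmless, because $r_j\le1$ is built into the definition, so Lemma \ref{lem:ahlfors} applies to every covering ball. Measurability of the covering balls is automatic, since they are open. The ball lower bound, with its case split, is the main step; the rest follows directly.
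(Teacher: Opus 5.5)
Your proof is correct and follows the paper's argument: the upper bound comes from the self-cover, and the measure estimate comes from subadditivity of $\sig$, Lemma~\ref{lem:ahlfors}, and $\sum_j r_j^{n-1}\le\bigl(\sum_j r_j^{(n-1)/2}\bigr)^2$. The only difference is your case split for the ball lower bound, which is valid but unnecessary: the paper applies that same $\ell^1\subset\ell^2$ inequality directly to the ball (equivalently, $\Hc(B(\theta,r))\gtrsim_n\sig(B(\theta,r))^{1/2}\gtrsim_n r^{(n-1)/2}$), so the ``too weak'' obstacle you identified never arises.
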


\begin{proof}
	The geodesic ball itself is an admissible covering; hence
	\(
	\mathcal H^{(n-1)/2}_1(B(\theta,r))
	\le r^{(n-1)/2}.
	\)
	
	Conversely, suppose that
	\[
	B(\theta,r)\subset \bigcup_j B(\theta_j,r_j),
	\qquad 0<r_j\le 1.
	\]
	
	By the subadditivity of surface measure and Ahlfors regularity,
	\[
	r^{n-1}
	\lesssim_n
	\sigma(B(\theta,r))
	\le
	\sum_j \sigma(B(\theta_j,r_j))
	\lesssim_n
	\sum_j r_j^{n-1}.
	\]
	Moreover,
	\(
	\sum_j r_j^{n-1}
	\le
	\left(\sum_j r_j^{(n-1)/2}\right)^2,
	\)
	and therefore
	\(
	\sum_j r_j^{(n-1)/2}\gtrsim_n r^{(n-1)/2}.
	\)
	Taking the infimum over all admissible coverings gives
	\(
	\mathcal H^{(n-1)/2}_1(B(\theta,r))
	\gtrsim_n r^{(n-1)/2}.
	\)
	Together with the upper bound above, this yields
	\(
	\mathcal H^{(n-1)/2}_1(B(\theta,r))
	\asymp_n r^{(n-1)/2}.
	\)
	
	Finally, suppose that
	\[
	E\subset \bigcup_j B(\theta_j,r_j),
	\qquad 0<r_j\le1.
	\]
	Again, by the subadditivity of measure and Ahlfors regularity,
	\[
	\sigma(E)
	\le
	\sum_j\sigma(B(\theta_j,r_j))
	\lesssim_n
	\sum_j r_j^{n-1}
	\le
	\left(\sum_jr_j^{(n-1)/2}\right)^2.
	\]
	Therefore,
	\(
	\sum_jr_j^{(n-1)/2}
	\gtrsim_n
	\sigma(E)^{1/2}.
	\)
	Taking the infimum over all admissible coverings, we obtain
	\[
	\mathcal H^{(n-1)/2}_1(E)
	\gtrsim_n
	\sigma(E)^{1/2}.
	\]
\end{proof}

\begin{definition}[Choquet integral {\cite{OrobitgVerdera1998}}]
For a nonnegative measurable function $u$, define
\[
 \int_{\Sph}u\dd\Hc
 :=\int_0^{\infty}\Hc(\{u>t\})\dd t.
\]
\end{definition}

For $E\subset\Sph$, let $\mathcal H_{\infty,\R^n}^\alpha(E)$ denote the Euclidean Hausdorff content, where $E$ is a subset of $\R^n$; namely,
\[
 \mathcal H_{\infty,\R^n}^\alpha(E)
 :=
 \inf\left\{
 \sum_i r_i^\alpha:
 E\subset\bigcup_iB_{\R^n}(x_i,r_i),\quad r_i>0
 \right\}.
\]

\begin{lemma}[Quasi-subadditivity of the Choquet integral]\label{lem:choquet-subadd}
There exists a constant $C_n$, depending only on $n$, such that for every sequence of nonnegative measurable functions $\{u_j\}_{j\ge1}$,
\begin{equation}\label{eq:choquet-subadd}
 \int_{\Sph}\sum_{j=1}^{\infty}u_j\dd\Hc
 \le C_n\sum_{j=1}^{\infty}\int_{\Sph}u_j\dd\Hc.
\end{equation}
\end{lemma}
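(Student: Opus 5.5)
The plan is to use the standard route for quasi-subadditivity of Choquet integrals with respect to Hausdorff content: replace $\Hc$ by an equivalent \emph{dyadic} content, which is strongly subadditive, and appeal to the fact that the Choquet integral against a strongly subadditive (monotone) set function is genuinely subadditive.

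\textbf{Step 1: a dyadic content on $\Sph$.} Fix a system of dyadic ``cubes'' on $\Sph$ (Christ-type cubes, or pullbacks of Euclidean dyadic cubes under finitely many bi-Lipschitz charts). They are to have the following properties:
\begin{enumerate}
\item each cube $Q$ of generation $k\ge0$ has diameter $\asymp_n 2^{-k}$;
\item any two cubes are either nested or disjoint;
\item every geodesic ball $B(\theta,r)$ with $0<r\le1$ is covered by at most $N_n$ cubes of generation $k$, where $2^{-k}\asymp r$.
\end{enumerate}
Define
\[
\widetilde{\mathcal H}(E):=\inf\Bigl\{\sum_j \ell(Q_j)^{(n-1)/2}: E\subset\bigcup_j Q_j\Bigr\},
\]
where the $Q_j$ are dyadic cubes of generation $\ge0$ and $\ell(Q)=2^{-k}$ for $Q$ of generation $k$. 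Property (3) gives $\widetilde{\mathcal H}\lesssim_n\Hc$. Conversely, each cube lies in a ball of comparable radius, which is at most $1$ after a harmless rescaling of the generations. Hence $\widetilde{\mathcal H}\asymp_n\Hc$, and therefore the two Choquet integrals are comparable up to constants depending only on $n$.

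\textbf{Step 2: strong subadditivity of $\widetilde{\mathcal H}$.} The claim is
\[
\widetilde{\mathcal H}(E_1\cup E_2)+\widetilde{\mathcal H}(E_1\cap E_2)\le \widetilde{\mathcal H}(E_1)+\widetilde{\mathcal H}(E_2).
\]
This is the Yang--Hu / Orobitg--Verdera argument. Take near-optimal coverings of $E_1$ and of $E_2$, and pass to their maximal cubes so that each covering is disjoint. By the nesting property (2), the union of the two families, reduced to maximal elements, covers $E_1\cup E_2$. The cubes that are left over (those contained in some cube of the other family) still cover $E_1\cap E_2$. No cube is used twice, so the combined cost is at most the sum of the two original costs.

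\textbf{Step 3: from strong subadditivity to the integral inequality.} For a monotone, strongly subadditive set function, the Choquet integral is subadditive for two functions. The level-set identity used here is
\[
\{u+v>t\}\subset\{u>s\}\cup\{v>t-s\},
\]
combined with the standard Denneberg argument, or more directly the proof for simple functions, where strong subadditivity turns the integral into an iterated sum. Induction then handles finite sums. For countable sums, use monotone convergence of the Choquet integral under increasing limits; this requires $\widetilde{\mathcal H}(\bigcup E_k)=\lim\widetilde{\mathcal H}(E_k)$ for increasing $E_k$. That continuity holds for dyadic contents because optimal covers can be taken by finitely many maximal cubes at generation $\ge0$, via a compactness/limiting argument. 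Combining with Step 1 yields \eqref{eq:choquet-subadd} with $C_n$ equal to the square of the comparability constant.

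\textbf{Main obstacle.} I expect two places to need care. The first is the construction of dyadic cubes on the sphere at scale $\le1$ with the nesting property and bounded ball-covering number. The second is the upward continuity needed for the countable case. As a fallback for the latter, if continuity is awkward, I would prove the finite-sum inequality and then use that $\int \sum_{j\le N}u_j\,d\Hc\uparrow\int\sum_j u_j\,d\Hc$. That convergence itself follows from the comparability with $\widetilde{\mathcal H}$ together with Fatou-type continuity of the level-set contents, which is verified directly for dyadic coverings.
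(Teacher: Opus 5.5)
\textbf{How your route differs from the paper's.} The paper builds no dyadic structure on $\Sph$. It shows $\mathcal H^{(n-1)/2}_1(E)\asymp_n\mathcal H^{(n-1)/2}_{\infty,\R^n}(E)$ for $E\subset\Sph$: a Euclidean ball of radius $r_i\le1/\pi$ meeting $E$ is replaced by the geodesic ball of radius $\pi r_i$ centred at a point of $E$, and larger balls are replaced by a fixed cover of $\Sph$ by $N_n$ geodesic balls of radius $1/2$. It then quotes the countable quasi-subadditivity of Choquet integrals with respect to Euclidean Hausdorff content from Chen--Ooi--Spector. You instead re-derive that Euclidean result intrinsically on the sphere. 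This is longer, but it is self-contained and would work on any Ahlfors-regular space carrying a dyadic system.

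Steps 1 and 2 and the finite-sum part of Step 3 are sound, with two small caveats.
\begin{enumerate}
\item Pullbacks of dyadic cubes under \emph{overlapping} charts are not nested-or-disjoint. You need either Christ/Hyt\"onen--Kairema cubes or a genuine partition of $\Sph$, e.g.\ the radial projection of the faces of $\partial[-1,1]^n$.
\item The inclusion $\{u+v>t\}\subset\{u>s\}\cup\{v>t-s\}$ alone only gives a factor $2$ per step, which blows up under induction. The finite case really rests on Choquet's theorem: the integral against a monotone submodular set function is subadditive with constant $1$.
\end{enumerate}

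\textbf{The gap: the countable step.} You justify upward continuity of $\widetilde{\mathcal H}$ by saying optimal covers can be taken to consist of finitely many maximal cubes, by compactness. This is false. Level sets of measurable functions are arbitrary measurable sets, not compact sets. For example, a countable dense subset of $\Sph$ has $\widetilde{\mathcal H}$-content $0$. Yet any finite family of cubes covering it has closures covering $\Sph$, so its cost is $\gtrsim_n1$. A naive diagonal limit of near-optimal covers also fails, because the cubes covering a fixed point may shrink as $k\to\infty$.

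Your fallback invokes the same continuity, so it is circular. The step also cannot be bypassed: splitting off a tail $\sum_{j>N}u_j$ just reproduces an infinite-sum Choquet integral.

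\textbf{The fix.} What you need is the increasing sets lemma for net contents, which has a purely combinatorial proof.
\begin{enumerate}
\item Let $E_k\uparrow E$ with $L:=\lim_k\widetilde{\mathcal H}(E_k)<\infty$. Choose covers $\mathcal C_k$ of $E_k$ by pairwise disjoint cubes with cost at most $\widetilde{\mathcal H}(E_k)+\varepsilon2^{-k}$.
\item Let $\mathcal G$ be the maximal cubes of $\bigcup_k\mathcal C_k$. These exist because generations are $\ge0$, so each cube has finitely many ancestors. Let $\mathcal S_i$ be the cubes of $\mathcal G$ that first appear in $\mathcal C_i$.
\item Near-optimality of $\mathcal C_i$ (compare $\mathcal C_i$ with $(\mathcal C_i\setminus\mathcal S_i)$ plus an optimal cover of $E_i\cap\bigcup\mathcal S_i$) gives, for $i\le m$, $\mathrm{cost}(\mathcal S_i)\le\widetilde{\mathcal H}(E_i\cap\bigcup\mathcal S_i)+\varepsilon2^{-i}\le\sum_{Q\in\mathcal S_i}\widetilde{\mathcal H}(E_m\cap Q)+\varepsilon2^{-i}$.
\item Maximality forces every cube of $\mathcal C_m$ meeting $Q\in\mathcal G$ to lie inside $Q$. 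Since the cubes of $\mathcal G$ are disjoint, $\sum_{Q\in\mathcal G}\widetilde{\mathcal H}(E_m\cap Q)\le\mathrm{cost}(\mathcal C_m)$.
\item Hence $\sum_{i\le m}\mathrm{cost}(\mathcal S_i)\le L+2\varepsilon$ for every $m$. So $\mathcal G$ is a cover of $E$ with cost at most $L+2\varepsilon$.
\end{enumerate}
With this lemma in place, your monotone-convergence argument, and hence the whole proof, goes through.
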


\begin{proof}
Set $\alpha=(n-1)/2$. The geodesic distance on $\Sph$ and the Euclidean distance satisfy
\[
 \abs{\theta-\varphi}
 \le d_{\Sph}(\theta,\varphi)
 \le \frac{\pi}{2}\abs{\theta-\varphi},
 \qquad \theta,\varphi\in\Sph.
\]
Since every geodesic ball $B(\theta,r)$ is contained in the Euclidean ball $B_{\R^n}(\theta,r)$, we first have
\[
 \mathcal H_{\infty,\R^n}^\alpha(E)
 \le \mathcal H_1^\alpha(E).
\]
Conversely, take an arbitrary covering of $E$ by Euclidean balls,
\[
E\subset\bigcup_i B_{\R^n}(x_i,r_i),
\qquad r_i>0.
\]
If
\(
\sum_i r_i^\alpha=\infty,
\)
then there is nothing to prove. It therefore suffices to consider the case
\[
\sum_i r_i^\alpha<\infty
\]
Remove all balls disjoint from  $E$. For each remaining ball $B_{\R^n}(x_i,r_i)$, choose
\[
\theta_i\in E\cap B_{\R^n}(x_i,r_i).
\]

First consider the case $r_i\le1/\pi$. If
\[
\varphi\in E\cap B_{\R^n}(x_i,r_i),
\]
then $\theta_i,\varphi\in B_{\R^n}(x_i,r_i)$, and the Euclidean triangle inequality gives
\[
\abs{\theta_i-\varphi}
\le
\abs{\theta_i-x_i}+\abs{x_i-\varphi}
<2r_i.
\]
Using the comparison between the geodesic distance on $\Sph$ and the chordal distance,
\[
d_{\Sph}(\theta_i,\varphi)
\le
\frac{\pi}{2}\abs{\theta_i-\varphi}
<\pi r_i.
\]
Therefore,
\[
E\cap B_{\R^n}(x_i,r_i)
\subset
B(\theta_i,\pi r_i).
\]
Since \(r_i\le 1/\pi\), we have \(\pi r_i\le 1\). Thus the geodesic ball on the right‑hand side is admissible in the definition of \(\mathcal H_1^\alpha\), and its contribution to the covering sum is
\[
(\pi r_i)^\alpha
=
\pi^\alpha r_i^\alpha.
\]

Next consider the case $r_i>1/\pi$. Choose a maximal $1/2$-separated set
\[
\{\eta_\nu\}_{\nu=1}^{N_n}\subset\Sph.
\]
where $1/2$-separated means that
\[
d_{\Sph}(\eta_\nu,\eta_\mu)\ge\frac12,
\qquad \nu\ne\mu.
\]
By the maximality of this set,
\[
\Sph
\subset
\bigcup_{\nu=1}^{N_n}
B(\eta_\nu,1/2).
\]
On the other hand, the geodesic balls
\[
B(\eta_\nu,1/4),
\qquad 1\le\nu\le N_n,
\]
are pairwise disjoint. By Lemma~\ref{lem:ahlfors},
\[
\begin{aligned}
	1
	=\sig(\Sph)
	&\ge
	\sum_{\nu=1}^{N_n}
	\sig\bigl(B(\eta_\nu,1/4)\bigr)\\
	&\gtrsim_n N_n.
\end{aligned}
\]
Hence
\[
N_n\lesssim_n1.
\]
Thus, the geodesic balls of radius $1/2$ constructed above form a finite covering of the unit sphere, whose cardinality depends only on the dimension $n$.

In particular, when $r_i>1/\pi$, the set $E\cap B_{\R^n}(x_i,r_i)$ can be covered by the $N_n$ geodesic balls of radius $1/2$ constructed above. The sum of the $\alpha$-powers of their radii is
\[
N_n\left(\frac12\right)^\alpha.
\]
Since $r_i>1/\pi$,
\[
r_i^\alpha>\pi^{-\alpha},
\]
and hence
\[
N_n\left(\frac12\right)^\alpha
\le
N_n\left(\frac{\pi}{2}\right)^\alpha r_i^\alpha
\lesssim_n r_i^\alpha,
\]
where in the last step we used
\(
\alpha=(n-1)/2.
\)

Let
\[
I_{\mathrm{small}}
:=
\{i:r_i\le1/\pi\},
\qquad
I_{\mathrm{large}}
:=
\{i:r_i>1/\pi\}.
\]
Combining the two covering procedures above,
\[
E
\subset
\bigcup_{i\in I_{\mathrm{small}}}
B(\theta_i,\pi r_i)
\,\cup\,
\bigcup_{i\in I_{\mathrm{large}}}
\bigcup_{\nu=1}^{N_n}
B(\eta_\nu,1/2).
\]
This is an admissible covering for $\mathcal H_1^\alpha(E)$, and the sum of the $\alpha$-powers of its radii satisfies
\[
\begin{aligned}
	\mathcal H_1^\alpha(E)
	&\le
	\sum_{i\in I_{\mathrm{small}}}
	(\pi r_i)^\alpha
	+
	\sum_{i\in I_{\mathrm{large}}}
	N_n\left(\frac12\right)^\alpha\\
	&\lesssim_n
	\sum_{i\in I_{\mathrm{small}}}r_i^\alpha
	+
	\sum_{i\in I_{\mathrm{large}}}r_i^\alpha\\
	&=
	\sum_i C_n r_i^\alpha.
\end{aligned}
\]
Finally, taking the infimum over all Euclidean-ball coverings of $E$, we obtain
\[
\mathcal H_1^\alpha(E)
\lesssim_n
\mathcal H_{\infty,\R^n}^\alpha(E).
\]
Thus, for every $E\subset\Sph$,
\begin{equation}\label{eq:spherical-euclidean-content}
 \mathcal H_1^\alpha(E)
 \asymp_n
 \mathcal H_{\infty,\R^n}^\alpha(E).
\end{equation}
By the level-set definition of the Choquet integral, \eqref{eq:spherical-euclidean-content} further implies that, for every nonnegative function $w$,
\begin{equation}\label{eq:spherical-euclidean-choquet}
 \int_{\Sph}w\dd\mathcal H_1^\alpha
 \asymp_n
 \int_{\Sph}w\dd\mathcal H_{\infty,\R^n}^\alpha.
\end{equation}
On the other hand, the countable quasi-subadditivity of the
Choquet integral with respect to  Euclidean Hausdorff content is
established in \cite[(2.4)]{ChenOoiSpector2024}.
Applying this result to $\Sph\subset\R^n$ with $\alpha=(n-1)/2$, and then using \eqref{eq:spherical-euclidean-choquet}, we obtain
\begin{align*}
 \int_{\Sph}\sum_{j=1}^{\infty}u_j\dd\mathcal H_1^\alpha
 &\lesssim_n
 \int_{\Sph}\sum_{j=1}^{\infty}u_j
 \dd\mathcal H_{\infty,\R^n}^\alpha\\
 &\lesssim_n
 \sum_{j=1}^{\infty}
 \int_{\Sph}u_j\dd\mathcal H_{\infty,\R^n}^\alpha\\
 &\lesssim_n
 \sum_{j=1}^{\infty}
 \int_{\Sph}u_j\dd\mathcal H_1^\alpha.
\end{align*}
This is \eqref{eq:choquet-subadd}.
\end{proof}

\subsection{Preliminaries on Maximal Operators and Classical Kernel Spaces}

We collect here the classical results that will be used directly in what follows. First, let $ \mathcal{M}$ denote the Hardy--Littlewood maximal operator, defined by
\[
 \mathcal{M} F(x)
 :=
 \sup_{r>0}
 \frac1{\abs{B_{\R^n}(0,r)}}
 \int_{B_{\R^n}(0,r)}\abs{F(x-y)}\dd y.
\]

\begin{lemma}[Hardy--Littlewood maximal theorem
{\cite[Chapter~I]{Stein1970}}]
\label{lem:HL}
There exists a constant $C_n$, depending only on $n$, such that for every
$F\in L^1(\R^n)$ and $\lambda>0$,
\[
 \abs{\{x:\mathcal{M}F(x)>\lambda\}}
 \le
 \frac{C_n}{\lambda}\norm F_1.
\]
Equivalently,
\[
 \norm{\mathcal{M}F}_{L^{1,\infty}(\R^n)}
 \lesssim_n
 \norm F_1.
\]
\end{lemma}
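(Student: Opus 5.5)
The plan is the classical Calderón--Zygmund covering argument combined with the Vitali-type covering lemma; this is how the weak type $(1,1)$ bound for $\mathcal{M}$ is proved in \cite[Chapter~I]{Stein1970}. We may assume $F\ge0$ and $F\in L^1(\R^n)$. Fix $\lambda>0$ and put
\[
E_\lambda:=\{x:\mathcal{M}F(x)>\lambda\}.
\]
The sets $\{x:\frac{1}{|B_{\R^n}(0,r)|}\int_{B_{\R^n}(x,r)}F>\lambda\}$ are open for each fixed $r$, so $E_\lambda$ is open and hence measurable. By inner regularity of Lebesgue measure, it suffices to bound $|K|$ for every compact $K\subset E_\lambda$.

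For each $x\in K$ choose a radius $r_x>0$ with
\[
\int_{B_{\R^n}(x,r_x)}F>\lambda\,|B_{\R^n}(x,r_x)|.
\]
This gives $|B_{\R^n}(x,r_x)|<\lambda^{-1}\|F\|_1$, so the radii are uniformly bounded. By compactness, finitely many of these balls cover $K$.

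Next apply the finite Vitali covering lemma. Order the finitely many balls by decreasing radius and greedily select a pairwise disjoint subfamily $B_1,\dots,B_m$, so that every ball in the finite family meets some selected $B_k$ of radius at least its own. Then the threefold enlargements $3B_k$ cover $K$. Summing,
\[
|K|\le\sum_k|3B_k|=3^n\sum_k|B_k|<\frac{3^n}{\lambda}\sum_k\int_{B_k}F\le\frac{3^n}{\lambda}\|F\|_1,
\]
where the last step uses the disjointness of the $B_k$. Taking the supremum over compact $K\subset E_\lambda$ gives the claim with $C_n=3^n$.

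The equivalent formulation in terms of $\|\mathcal{M}F\|_{L^{1,\infty}}$ is immediate from the notation's definition of the weak-$L^1$ quasi-norm. The only genuinely nontrivial step is the Vitali selection, which is elementary in this finite setting, so we expect no real obstacle.
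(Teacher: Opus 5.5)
Your argument is correct and is essentially the standard Vitali-covering proof found in the source the paper cites (Stein, Chapter~I); the paper gives no proof of its own and simply invokes that reference. The only difference is minor: Stein uses a countable Vitali selection with constant $5^n$, which is where the uniform bound on the radii is needed, whereas after your reduction to compact $K$ that bound is superfluous and the finite greedy selection gives $C_n=3^n$.
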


\begin{lemma}[Weak type control for averages over a fixed shape]\label{lem:fixed-shape}
Let $Q_0\subset\R^n$ be a bounded measurable set satisfying
$0<\abs{Q_0}<\infty$ and $Q_0\subset B_{\R^n}(0,C_0)$. Define
\[
 \mathcal N_{Q_0}F(x)
 :=\sup_{R>0}\frac1{\abs{RQ_0}}
 \int_{RQ_0}\abs{F(x-y)}\dd y.
\]
Then there exists a constant $C_{n,Q_0}$, depending only on $n$ and $Q_0$, such that
\[
 \abs{\{x:\mathcal N_{Q_0}F(x)>\lambda\}}
 \le\frac{C_{n,Q_0}}{\lambda}\norm F_1.
\]
\end{lemma}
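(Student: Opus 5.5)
The plan is to dominate $\mathcal N_{Q_0}F$ pointwise by a constant multiple of the Hardy--Littlewood maximal function $\mathcal M F$ and then invoke Lemma~\ref{lem:HL}. Because $Q_0\subset B_{\R^n}(0,C_0)$, scaling gives $RQ_0\subset B_{\R^n}(0,RC_0)$ for every $R>0$, and also $\abs{RQ_0}=R^n\abs{Q_0}$. Since the integrand is nonnegative, we can enlarge the domain of integration:
\[
 \frac1{\abs{RQ_0}}\int_{RQ_0}\abs{F(x-y)}\dd y
 \le
 \frac{\abs{B_{\R^n}(0,RC_0)}}{R^n\abs{Q_0}}\cdot
 \frac1{\abs{B_{\R^n}(0,RC_0)}}\int_{B_{\R^n}(0,RC_0)}\abs{F(x-y)}\dd y .
\]
The prefactor is $\abs{B_{\R^n}(0,1)}C_0^n/\abs{Q_0}$, and it does not depend on $R$. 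The second factor is at most $\mathcal M F(x)$.

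Taking the supremum over $R>0$ therefore gives
\[
 \mathcal N_{Q_0}F(x)\le K\,\mathcal MF(x),
 \qquad K:=\frac{\abs{B_{\R^n}(0,1)}\,C_0^n}{\abs{Q_0}}.
\]
It follows that $\{\mathcal N_{Q_0}F>\lambda\}\subset\{\mathcal MF>\lambda/K\}$. Lemma~\ref{lem:HL} then bounds the measure of the latter set by $C_nK\norm F_1/\lambda$. This gives the claim with $C_{n,Q_0}=C_nK$, which depends only on $n$, $C_0$ and $\abs{Q_0}$.

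There is no real obstacle here; only two routine points need a word. First, $\mathcal M$ is defined with centred balls $B_{\R^n}(0,r)$ in the variable $y$, which is exactly what the enlargement step produces, so no doubling argument is needed. Second, measurability of $\mathcal N_{Q_0}F$ is irrelevant because we only need the set inclusion, with outer measure if necessary. The hypothesis $\abs{Q_0}>0$ is what keeps $K$ finite.
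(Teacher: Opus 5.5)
Your proof is correct and follows the paper's argument: $RQ_0\subset B_{\R^n}(0,C_0R)$ and $\abs{RQ_0}=R^n\abs{Q_0}$ give the pointwise bound $\mathcal N_{Q_0}F\le C_{n,Q_0}\,\mathcal MF$, and Lemma~\ref{lem:HL} then gives the weak-type estimate. Your explicit constant $K=\abs{B_{\R^n}(0,1)}\,C_0^n/\abs{Q_0}$ just makes the paper's $C_{n,Q_0}$ concrete; since $C_0$ can be fixed in terms of $Q_0$, the dependence is as claimed.
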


\begin{proof}
Since $RQ_0\subset B_{\R^n}(0,C_0R)$ and $\abs{RQ_0}=R^n\abs{Q_0}$,
\[
 \frac1{\abs{RQ_0}}\int_{RQ_0}\abs{F(x-y)}\dd y
 \le C_{n,Q_0}\mathcal {M} F(x).
\]
Taking the supremum over $R>0$ and then applying Lemma \ref{lem:HL} yields the desired result.
\end{proof}

\begin{lemma}[The $L\log^+\! L$ weak-type estimate of Christ and Rubio de Francia
{\cite{ChristRubio1988}}]
\label{lem:CR-modular}
Let
\[
 \Phi(t)=t\log(2+t),\qquad t\ge0.
\]
If $\Omega\in L\log^+ \!L(\Sph)$, then
\begin{equation}\label{eq:CR-modular-form}
 \norm{\mathcal M_{\Omega}f}_{L^{1,\infty}}
 \lesssim_n
 \left(
 1+\int_{\Sph}\Phi(\abs\Omega)\dd\sig
 \right)\norm f_1.
\end{equation}
\end{lemma}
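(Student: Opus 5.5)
Since $\mathcal M_\Omega f$ depends only on $|\Omega|$ and $|f|$, and the quantity $\int_{\Sph}\Phi(\abs\Omega)\dd\sig=\int_{\Sph}|\Omega|\log(2+|\Omega|)\dd\sig$ is exactly the expression $\|\Omega\|_{L\log^+\! L}$ appearing in Theorem A, I plan to read \eqref{eq:CR-modular-form} off Theorem A. The only real work is to remove the a priori restriction on $f$ and to rewrite the estimate as a weak-$L^1$ quasi-norm bound.

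\emph{Step 1: bounded compactly supported $f$.} For such $f$, Theorem A gives, for each $\lambda>0$,
\[
\lambda\,|\{\mathcal M_\Omega f>\lambda\}|\lesssim_n\Bigl(1+\int_{\Sph}\Phi(\abs\Omega)\dd\sig\Bigr)\|f\|_1 .
\]
Taking the supremum over $\lambda>0$ yields \eqref{eq:CR-modular-form} for this class of $f$.

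\emph{Step 2: general $f\in L^1(\R^n)$.} Set $f_k:=\min(|f|,k)\1_{B_{\R^n}(0,k)}$. Then $f_k\uparrow|f|$ pointwise. For each fixed $r$, monotone convergence gives
\[
\int_{|y|<r}f_k(x-y)|\Omega(y)|\dd y\uparrow\int_{|y|<r}|f(x-y)||\Omega(y)|\dd y .
\]
A supremum of increasing limits is the limit of the suprema, so $\mathcal M_\Omega f_k\uparrow\mathcal M_\Omega f$ pointwise. Consequently the level sets $\{\mathcal M_\Omega f_k>\lambda\}$ increase to $\{\mathcal M_\Omega f>\lambda\}$. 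By continuity of Lebesgue measure from below and $\|f_k\|_1\le\|f\|_1$, the bound of Step 1 passes to the limit with the same constant.

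\emph{Where the content lies.} The substantive content sits entirely inside Theorem A, which we are allowed to assume. If one wanted to re-derive it, the route would be the one in \cite{ChristRubio1988}:
\begin{enumerate}
\item Make a Calder\'on--Zygmund decomposition of $f$ at height $\lambda$. The good part is handled by the $L^2$ bound from the method of rotations.
\item Decompose $\Omega=\sum_k\Omega_k$ with $|\Omega_k|\sim2^k$ on its support.
\item Control each dyadic piece of the bad part by a $TT^*$ argument, which yields $L^2$ decay $2^{-\epsilon s}$ in the scale separation $s$.
\item Truncate at $s\sim k$, with the remaining terms handled trivially in $L^1$.
\end{enumerate}
The factor $k\,\|\Omega_k\|_1$ produced by this truncation is what sums to $\int|\Omega|\log(2+|\Omega|)\dd\sig$. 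Step~3 is the genuine obstacle of that argument. For the present lemma, however, no new obstacle arises beyond the approximation in Step~2.
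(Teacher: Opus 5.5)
Your proposal is correct and takes the same approach as the paper, which gives no independent proof and simply cites Christ--Rubio de Francia (Theorem A); as you note, there the quantity $\|\Omega\|_{L\log^+\! L}$ is exactly the modular $\int_{\Sph}\Phi(|\Omega|)\,d\sigma$. Your monotone-convergence passage from bounded compactly supported $f$ to general $f\in L^1(\R^n)$ is a routine step that the paper leaves implicit, and you carry it out correctly.
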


\begin{lemma}
\label{lem:classical-llogl}
There exists a constant $C_n$, depending only on $n$, such that for every
$\Omega\in L\log^+\! L(\Sph)$, $f\in L^1(\R^n)$, and $\lambda>0$,
\begin{equation}\label{eq:classical-llogl}
 \abs{\{x:\mathcal M_{\Omega}f(x)>\lambda\}}
 \le
 \frac{C_n}{\lambda}
 \norm{\Omega}_{L\log^+\! L}\norm f_1.
\end{equation}
\end{lemma}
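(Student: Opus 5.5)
The goal is to turn the modular bound of Lemma~\ref{lem:CR-modular} into a bound that is homogeneous in $\Omega$. The tools are the positive homogeneity of $\mathcal M_\Omega$ in $\Omega$ and the normalization built into the Luxemburg norm \eqref{eq:LlogL-lux}.

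First dispose of the trivial case. If $\norm{\Omega}_{L\log^+\! L}=0$, then $\Omega=0$ $\sig$-a.e., so $\mathcal M_\Omega f\equiv0$ and there is nothing to prove.

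Otherwise set $a:=\norm{\Omega}_{L\log^+\! L}\in(0,\infty)$ and $\Omega_a:=\Omega/a$. Since $t\mapsto\Phi(t/b)$ is nonincreasing in $b$ and continuous, monotone convergence applied along $b\downarrow a$ shows that the infimum in \eqref{eq:LlogL-lux} is attained. Hence
\[
\int_{\Sph}\Phi(\abs{\Omega_a})\dd\sig\le1 .
\]
If one prefers to avoid the attainment argument, one can instead use any $b>a$ and let $b\downarrow a$ at the end.

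Now apply Lemma~\ref{lem:CR-modular} to $\Omega_a$:
\[
\norm{\mathcal M_{\Omega_a}f}_{L^{1,\infty}}\lesssim_n\Bigl(1+\int_{\Sph}\Phi(\abs{\Omega_a})\dd\sig\Bigr)\norm f_1\le 2C_n\norm f_1 .
\]
By definition \eqref{eq:rough-maximal}, $\mathcal M_\Omega f=a\,\mathcal M_{\Omega_a}f$ pointwise. Therefore
\[
\abs{\{\mathcal M_\Omega f>\lambda\}}=\abs{\{\mathcal M_{\Omega_a}f>\lambda/a\}}\le\frac{2C_n a}{\lambda}\norm f_1 ,
\]
which is \eqref{eq:classical-llogl}.

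No step is a real obstacle. The only point needing care is the normalization $\int\Phi(\abs{\Omega}/a)\dd\sig\le1$ at $a=\norm{\Omega}_{L\log^+\! L}$ itself (or the limiting argument that replaces it), together with the observation that a norm-zero kernel is trivial.
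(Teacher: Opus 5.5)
Your proposal is correct and follows the paper's proof. Both arguments normalize by the Luxemburg norm so that the modular is at most $1$, apply Lemma~\ref{lem:CR-modular} to $\Omega/a$, and use $\mathcal M_{\Omega}f=a\,\mathcal M_{\Omega/a}f$. The only difference is cosmetic: you take $a=\norm{\Omega}_{L\log^+\! L}$ directly, justified by monotone convergence, whereas the paper takes $a>\norm{\Omega}_{L\log^+\! L}$ and lets $a$ decrease at the end (the alternative you also mention).
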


\begin{proof}
If $\Omega=0$, the conclusion is immediate. Suppose that $\Omega\ne0$, and take any
\[
 a>\norm{\Omega}_{L\log^+\! L}.
\]
By the definition of the norm in \eqref{eq:LlogL-lux},
\[
 \int_{\Sph}
 \Phi\!\left(\frac{\abs\Omega}{a}\right)\dd\sig
 \le1.
\]
Set $\widetilde\Omega:=\Omega/a$. Applying Lemma \ref{lem:CR-modular} to $\widetilde\Omega$ gives
\[
 \norm{\mathcal M_{\widetilde\Omega}f}_{L^{1,\infty}}
 \lesssim_n\norm f_1.
\]
On the other hand, by the homogeneity of \eqref{eq:rough-maximal},
\[
 \mathcal M_{\Omega}f=a\mathcal M_{\widetilde\Omega}f.
\]
Hence
\[
 \norm{\mathcal M_{\Omega}f}_{L^{1,\infty}}
 \lesssim_n a\norm f_1.
\]
Letting $a\downarrow\norm{\Omega}_{L\log^+ \!L}$ yields \eqref{eq:classical-llogl}.
\end{proof}

\begin{remark}\label{rem:modular-vs-norm}
The integral quantity
\[
 \int_{\Sph}\abs\Omega\log(2+\abs\Omega)\dd\sig
\]
characterizes the same $L\log^+\! L$ space, but is not itself homogeneous in $\Omega$.
Through Luxemburg normalization, Lemma \ref{lem:classical-llogl} rewrites the classical estimate in a homogeneous form fully compatible with
\[
 \mathcal M_{c\Omega}=\abs c\,\mathcal M_{\Omega}
\].
\end{remark}

\begin{lemma}[{\cite[~p48]{Grafakos2014Classical}}]
\label{lem:lorentz-L1}
Let $(Y,\mu)$ be a finite measure space, and define
\[
 \norm h_{L^{1,1/2}(Y)}
 :=
 \left(
 \int_0^\infty
 t^{-1/2}\mu(\{\abs h>t\})^{1/2}\dd t
 \right)^2.
\]
Then
\[
 L^{1,1/2}(Y)\subset L^1(Y)
\]
continuously. In particular, when $Y=\Sph$ and $\mu=\sig$,
\[
 \norm h_{L^1(\Sph)}
 \lesssim
 \norm h_{L^{1,1/2}(\Sph)}.
\]
\end{lemma}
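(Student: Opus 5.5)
We need to prove Lemma \ref{lem:lorentz-L1}: on a finite measure space, $\|h\|_{L^1}\lesssim \|h\|_{L^{1,1/2}}$ with the quasi-norm defined as $\bigl(\int_0^\infty t^{-1/2}\mu(\{|h|>t\})^{1/2}\,dt\bigr)^2$.

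The plan is to use the layer-cake formula together with a monotonicity argument for the distribution function. Write $\lambda(t):=\mu(\{|h|>t\})$, which is nonincreasing in $t$. Then
\[
\|h\|_{L^1}=\int_0^\infty \lambda(t)\,dt .
\]
Set
\[
I:=\int_0^\infty t^{-1/2}\lambda(t)^{1/2}\,dt .
\]
We must show that $\int_0^\infty\lambda\,dt\le C I^2$.

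\emph{Pointwise bound for $\lambda$.} Since $\lambda$ is nonincreasing, for each $t>0$ we have
\[
I\ge \int_0^t s^{-1/2}\lambda(s)^{1/2}\,ds\ge \lambda(t)^{1/2}\int_0^t s^{-1/2}\,ds=2t^{1/2}\lambda(t)^{1/2}.
\]
Hence $\lambda(t)^{1/2}\le I/(2t^{1/2})$ for every $t>0$.

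\emph{Conclusion.} Write $\lambda=\lambda^{1/2}\cdot\lambda^{1/2}$ and insert the pointwise bound into one factor:
\[
\int_0^\infty \lambda(t)\,dt\le \int_0^\infty \frac{I}{2t^{1/2}}\,\lambda(t)^{1/2}\,dt=\frac{I^2}{2}.
\]
This gives $\|h\|_{L^1}\le \tfrac12\|h\|_{L^{1,1/2}}$, with an absolute constant. The finiteness of $\mu$ is not actually needed. The specialization to $Y=\Sph$, $\mu=\sig$ is immediate.

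There is no real obstacle here. The only point needing care is to use the monotonicity of $\lambda$ in the first step. This is the standard fact that $L^{p,q_1}\subset L^{p,q_2}$ for $q_1<q_2$, here with $q_1=1/2$ and $q_2=1$. If $I=\infty$ the inequality is trivial, and measurability of $\lambda$ follows from its monotonicity.
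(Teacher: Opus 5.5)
Your proof is correct. It reaches the same inequality as the paper by a continuous route, where the paper discretizes.

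The paper works dyadically. With $F_k=\{|h|>2^k\}$ it bounds $\|h\|_{L^1}\le\sum_k 2^{k+1}\mu(F_k)$. It then compares $\int_0^\infty t^{-1/2}\mu(\{|h|>t\})^{1/2}\,dt$ with $\sum_k 2^{k/2}\mu(F_k)^{1/2}$. It concludes from $\sum_k a_k^2\le(\sum_k a_k)^2$ applied to $a_k=2^{k/2}\mu(F_k)^{1/2}\ge 0$.

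You instead use monotonicity of $\lambda$ to get the weak-type bound $\sup_{t>0} t^{1/2}\lambda(t)^{1/2}\le I/2$, and then split $\lambda=\lambda^{1/2}\cdot\lambda^{1/2}$. This is the continuous counterpart of $\sum_k a_k^2\le(\sup_k a_k)\sum_k a_k$, so the underlying mechanism is the same. The paper's dyadic comparison also relies on monotonicity, through the inclusion $F_{k+1}\subset\{|h|>t\}$ for $t\in[2^k,2^{k+1})$, which gives $I\gtrsim\sum_k 2^{k/2}\mu(F_k)^{1/2}$; it just doesn't say so explicitly.

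Your version gains two things. It gives the explicit constant $\tfrac12$. It also makes clear that finiteness of $\mu$ plays no role, since the layer-cake identity holds on any measure space.

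The paper's dyadic form has its own advantage: it matches the level sets $F_k$ and the dyadic layer formula of Proposition~\ref{prop:layer}. That same discretization is reused in the proof of \eqref{eq:C-L1} and of the Lorentz embedding.
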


\begin{proof}
Let $F_k:=\{\abs h>2^k\}$. By the layer-cake decomposition,
\[
 \norm h_{L^1}
 \le
 \sum_{k\in\mathbb Z}2^{k+1}\mu(F_k).
\]
On the other hand,
\[
 \int_0^\infty
 t^{-1/2}\mu(\{\abs h>t\})^{1/2}\dd t
 \asymp
 \sum_{k\in\mathbb Z}
 2^{k/2}\mu(F_k)^{1/2}.
\]
Therefore,
\[
 \norm h_{L^1}
 \lesssim
 \sum_k
 \left(2^{k/2}\mu(F_k)^{1/2}\right)^2
 \le
 \left(
 \sum_k2^{k/2}\mu(F_k)^{1/2}
 \right)^2,
\]
which shows the assertion.
\end{proof}

\section{The Hausdorff--Choquet Angular Space and Its Structure}\label{sec:HC-structure}

\subsection{The Hausdorff--Choquet Space and Geodesic-Ball Atoms}\label{sec:atomic-space}

Throughout this subsection, we fix $\alpha=(n-1)/2$.

All function‑space elements in this section are defined on equivalence classes modulo equality $\sigma$-almost everywhere. Let \(\omega\) be a measurable function representing such an equivalence class, and set
\[
 I(\omega)
 :=
 \int_{\Sph}\abs{\omega(\theta)}^{1/2}\dd\Hc(\theta).
\]

\begin{definition}[Hausdorff--Choquet space]\label{def:capacity}
Let \(\Omega\) denote an equivalence class of measurable functions modulo \(\sigma\)-almost‑everywhere equality, and define
\begin{equation}\label{eq:rhoC}
 \rho_{\Cspace}(\Omega)
 :=
 \inf_{\omega=\Omega\ {\rm a.e.}\,[\sigma]} I(\omega),
 \qquad
 \norm{\Omega}_{\Cspace}:=\rho_{\Cspace}(\Omega)^2.
\end{equation}
Let
\[
 \Cspace(\Sph)
 :=
 \{\Omega:\rho_{\Cspace}(\Omega)<\infty\}.
\]
\end{definition}

\begin{definition}[General Hausdorff--Choquet scale]\label{def:C-alpha}
For $0<\alpha\le n-1$, define
\[
 \rho_{\Cspace_\alpha}(\Omega)
 :=\inf_{\omega=\Omega\ {\rm a.e.}\,[\sig]}
 \int_{\Sph}\abs{\omega}^{1/2}\dd\mathcal H_1^\alpha,
 \qquad
 \norm{\Omega}_{\Cspace_\alpha}
 :=\rho_{\Cspace_\alpha}(\Omega)^2.
\]
Then $\Cspace_{(n-1)/2}=\Cspace$, with identical quasi-norms.
\end{definition}

\begin{remark}
Taking the infimum over all $\sigma$-almost-everywhere representatives is essential. The operator \(\mathcal M_{\Omega}\) depends only on the equivalence class of \(\Omega\) modulo \(\sigma\)-almost‑everywhere equality, whereas the Choquet integral may change under modifications on a \(\sigma\)-null set.
\end{remark}

\begin{proposition}\label{prop:layer}
For any measurable function $\omega$ on $\Sph$, let
\[
 F_k(\omega):=\{\theta\in\Sph:\abs{\omega(\theta)}>2^k\}.
\]
Then
\begin{align}
 I(\omega)
 &=\frac12\int_0^\infty t^{-1/2}
 \Hc(\{\abs\omega>t\})\dd t,
 \label{eq:continuous-layer}\\
 I(\omega)
 &\asymp
 \sum_{k\in\mathbb Z}2^{k/2}\Hc(F_k(\omega)).
 \label{eq:dyadic-layer}
\end{align}
\end{proposition}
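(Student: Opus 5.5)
For \eqref{eq:continuous-layer}, I will work directly from the level-set definition of the Choquet integral and change variables. Since $\{\abs\omega^{1/2}>s\}=\{\abs\omega>s^2\}$ for $s>0$,
\[
 I(\omega)=\int_0^\infty\Hc(\{\abs\omega^{1/2}>s\})\dd s
 =\int_0^\infty\Hc(\{\abs\omega>s^2\})\dd s .
\]
Substituting $t=s^2$, so that $ds=\tfrac12t^{-1/2}\dd t$, gives \eqref{eq:continuous-layer}. The map $s\mapsto s^2$ is a monotone bijection of $(0,\infty)$, and the integrand is nonnegative and monotone in $s$, hence Lebesgue measurable. The change of variables is therefore legitimate even when both sides are infinite.

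For \eqref{eq:dyadic-layer}, I will split $(0,\infty)$ into the dyadic intervals $(2^k,2^{k+1}]$, $k\in\mathbb Z$, and use monotonicity of the content. The function $t\mapsto\Hc(\{\abs\omega>t\})$ is nonincreasing. So on $(2^k,2^{k+1}]$ it lies between $\Hc(F_{k+1}(\omega))$ and $\Hc(F_k(\omega))$. Also
\[
\int_{2^k}^{2^{k+1}}t^{-1/2}\dd t=2(\sqrt2-1)\,2^{k/2}.
\]
This yields the upper bound
\[
 I(\omega)\le(\sqrt2-1)\sum_k2^{k/2}\Hc(F_k(\omega)).
\]
It also yields the lower bound
\[
 I(\omega)\ge(\sqrt2-1)\sum_k2^{k/2}\Hc(F_{k+1}(\omega))=\frac{\sqrt2-1}{\sqrt2}\sum_k2^{k/2}\Hc(F_k(\omega)),
\]
where the last step is the index shift $k+1\mapsto k$. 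Together these give \eqref{eq:dyadic-layer} with absolute constants.

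There is no genuine obstacle here. The only points needing care are the following:
\begin{enumerate}
\item monotonicity of $\Hc$ under inclusion, which is immediate because any cover of a larger set covers a smaller one;
\item the fact that all quantities are in $[0,\infty]$, so sums and integrals may be exchanged freely by Tonelli, and the equivalence holds including the case where both sides are infinite.
\end{enumerate}
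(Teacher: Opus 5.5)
Your proposal is correct and follows the paper's proof essentially step for step. You use the substitution $t=s^2$ for \eqref{eq:continuous-layer}, then for \eqref{eq:dyadic-layer} the dyadic splitting with the sandwich $F_{k+1}(\omega)\subset\{\abs\omega>t\}\subset F_k(\omega)$, the computation $\tfrac12\int_{2^k}^{2^{k+1}}t^{-1/2}\dd t=(\sqrt2-1)2^{k/2}$, and an index shift; your explicit constants $\tfrac{\sqrt2-1}{\sqrt2}$ and $\sqrt2-1$ are right, and using $(2^k,2^{k+1}]$ instead of the paper's $[2^k,2^{k+1})$ changes nothing.
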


\begin{proof}
By the definition of the Choquet integral,
\[
 I(\omega)
 =\int_0^\infty
 \Hc(\{\abs\omega^{1/2}>s\})\dd s.
\]
The change of variables $t=s^2$ yields \eqref{eq:continuous-layer}. Decompose $(0,\infty)$ dyadically as $(0,\infty)=\bigcup_{k\in\mathbb Z}[2^k,2^{k+1})$. If $2^k\le t<2^{k+1}$, then
\[
 F_{k+1}(\omega)
 \subset\{\abs\omega>t\}
 \subset F_k(\omega).
\]
Moreover,
\[
 \frac12\int_{2^k}^{2^{k+1}}t^{-1/2}\dd t
 =(\sqrt2-1)2^{k/2}.
\]
Summing over $k\in\mathbb Z$ and shifting the index gives \eqref{eq:dyadic-layer}.
\end{proof}

\begin{proposition}[Basic properties]\label{prop:basic-properties}
For $\Omega,\Omega_1,\Omega_2\in\Cspace(\Sph)$ and $c\ge0$, one has
\begin{align}
 \norm{c\Omega}_{\Cspace}
 &=c\norm{\Omega}_{\Cspace},
 \label{eq:C-homogeneous}\\
 \norm{\Omega_1+\Omega_2}_{\Cspace}^{1/2}
 &\lesssim_n
 \norm{\Omega_1}_{\Cspace}^{1/2}
 +\norm{\Omega_2}_{\Cspace}^{1/2},
 \label{eq:C-quasi-triangle}\\
 \norm{\Omega}_{L^1(\Sph)}
 &\lesssim_n\norm{\Omega}_{\Cspace}.
 \label{eq:C-L1}
\end{align}
Moreover, if $B\subset\Sph$ is a geodesic ball and $A\ge0$, then
\begin{equation}\label{eq:ball-spike}
 \norm{A\1_B}_{\Cspace}\asymp_n A\sig(B).
\end{equation}
\end{proposition}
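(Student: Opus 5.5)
Each of the four assertions of Proposition~\ref{prop:basic-properties} follows from the definition of $\rho_{\Cspace}$ together with Proposition~\ref{prop:content-estimates} and Lemma~\ref{lem:choquet-subadd}; we treat them in the order stated.

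\emph{Homogeneity \eqref{eq:C-homogeneous}.} For $c=0$ the identity is trivial. For $c>0$, the map $\omega\mapsto c\omega$ is a bijection between representatives of $\Omega$ and representatives of $c\Omega$. Moreover $\{|c\omega|^{1/2}>s\}=\{|\omega|^{1/2}>s/\sqrt c\}$. The substitution $s=\sqrt c\,s'$ in the Choquet integral therefore gives $I(c\omega)=\sqrt c\,I(\omega)$. Taking the infimum yields $\rho_{\Cspace}(c\Omega)=\sqrt c\,\rho_{\Cspace}(\Omega)$, and squaring gives \eqref{eq:C-homogeneous}.

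\emph{Quasi-triangle inequality \eqref{eq:C-quasi-triangle}.} Take representatives $\omega_1,\omega_2$ of $\Omega_1,\Omega_2$ with $I(\omega_i)\le\rho_{\Cspace}(\Omega_i)+\varepsilon$. Then $\omega_1+\omega_2$ represents $\Omega_1+\Omega_2$, and pointwise
\[
|\omega_1+\omega_2|^{1/2}\le|\omega_1|^{1/2}+|\omega_2|^{1/2}.
\]
The Choquet integral is monotone, because $\Hc$ is monotone on level sets. Applying Lemma~\ref{lem:choquet-subadd} to the two-term sum gives $I(\omega_1+\omega_2)\le C_n\bigl(I(\omega_1)+I(\omega_2)\bigr)$. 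Letting $\varepsilon\to0$ gives \eqref{eq:C-quasi-triangle}, since $\norm{\cdot}_{\Cspace}^{1/2}=\rho_{\Cspace}$.

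\emph{Embedding into $L^1$ \eqref{eq:C-L1}.} Fix any representative $\omega$. By the second estimate in \eqref{eq:ball-content}, $\Hc(\{|\omega|>t\})\gtrsim_n\sig(\{|\omega|>t\})^{1/2}$. Inserting this into \eqref{eq:continuous-layer} gives
\[
I(\omega)\gtrsim_n\int_0^\infty t^{-1/2}\sig(\{|\omega|>t\})^{1/2}\dd t=\norm{\omega}_{L^{1,1/2}}^{1/2}.
\]
Lemma~\ref{lem:lorentz-L1} then yields $\norm{\omega}_{L^1}\lesssim\norm{\omega}_{L^{1,1/2}}\lesssim_n I(\omega)^2$. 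The $L^1$ norm does not depend on the representative, so taking the infimum over representatives gives \eqref{eq:C-L1}.

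\emph{Ball spike \eqref{eq:ball-spike}.} Let $B=B(\theta,r)$ with $0<r\le1$. For the upper bound, use the representative $A\1_B$ itself. Its level sets are $B$ for $t<A$ and empty otherwise, so \eqref{eq:continuous-layer} and \eqref{eq:ball-content} give $I=\sqrt A\,\Hc(B)\asymp_n\sqrt A\,r^{(n-1)/2}$. Squaring and using Lemma~\ref{lem:ahlfors} gives $\norm{A\1_B}_{\Cspace}\lesssim_n A r^{n-1}\asymp_n A\sig(B)$. The lower bound is the delicate point, since we must control every representative, and a representative may differ from $A\1_B$ on a $\sig$-null set. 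The route around this is \eqref{eq:C-L1}: it has already been proved for all representatives, so it gives $\norm{A\1_B}_{\Cspace}\gtrsim_n\norm{A\1_B}_{L^1}=A\sig(B)$. Hence no null-set issue arises.

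This step, handling arbitrary representatives in the lower bound, is the only place where we expected an obstacle. The rest of the argument is a direct computation.
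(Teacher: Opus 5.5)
Your proposal is correct and follows the paper's proof essentially step for step: you argue representative by representative for homogeneity, you get the quasi-triangle inequality from Lemma~\ref{lem:choquet-subadd}, and you use \eqref{eq:C-L1} to rule out null-set modifications in the lower bound of \eqref{eq:ball-spike}. The only cosmetic difference is in \eqref{eq:C-L1}, where you go through \eqref{eq:continuous-layer} and Lemma~\ref{lem:lorentz-L1} rather than the paper's inline dyadic computation with $F_k(\omega)$ and \eqref{eq:dyadic-layer}; this is the same estimate, and the paper itself packages it your way in Proposition~\ref{prop:lorentz-embedding}.
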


\begin{proof}
Homogeneity follows directly from the definition. To prove the quasi-triangle inequality, fix $\varepsilon>0$ and choose measurable functions $\omega_i=\Omega_i$ $\sigma$-almost everywhere such that
\[
 I(\omega_i)\le\rho_{\Cspace}(\Omega_i)+\varepsilon,
 \qquad i=1,2.
\]
By
\[
 \abs{\omega_1+\omega_2}^{1/2}
 \le\abs{\omega_1}^{1/2}+\abs{\omega_2}^{1/2}
\]
and Lemma \ref{lem:choquet-subadd},
\[
 I(\omega_1+\omega_2)
 \lesssim_n I(\omega_1)+I(\omega_2).
\]
Letting $\varepsilon\downarrow0$ gives \eqref{eq:C-quasi-triangle}.

We next establish the $L^1$ estimate. For any measurable function $\omega=\Omega$ $\sigma$-almost everywhere, let $F_k=F_k(\omega)$. By
\[
 \abs\omega\le\sum_{k\in\mathbb Z}2^{k+1}\1_{F_k}
\]
and Tonelli's theorem,
\[
 \norm{\Omega}_{L^1}^{1/2}
 =\norm{\omega}_{L^1}^{1/2}
 \le\sum_{k\in\mathbb Z}2^{(k+1)/2}\sig(F_k)^{1/2}.
\]
By Proposition \ref{prop:content-estimates} and \eqref{eq:dyadic-layer},
\[
 \norm{\Omega}_{L^1}^{1/2}
 \lesssim_n I(\omega).
\]
Taking the infimum over all such functions $\omega$ and then squaring gives \eqref{eq:C-L1}.

Finally, taking $\omega=A\1_B$ in the definition, \eqref{eq:ball-content} and \eqref{eq:ball-measure} give
\[
 \norm{A\1_B}_{\Cspace}
 \le A\Hc(B)^2
 \lesssim_n A\sig(B).
\]
The reverse inequality follows from \eqref{eq:C-L1}:
\[
 A\sig(B)=\norm{A\1_B}_{L^1}
 \lesssim_n\norm{A\1_B}_{\Cspace}.
\]
\end{proof}

If $B\subset\Sph$ is a geodesic ball, set
\begin{equation}\label{eq:uB}
 u_B:=\frac{\1_B}{\sig(B)}.
\end{equation}

\begin{proposition}\label{prop:ball-majorant}
	For every $\Omega\in\Cspace(\Sph)$ and $\varepsilon>0$, there exist at most countably many geodesic balls $B_j\subset\Sph$ and coefficients $c_j\geq0$ such that
	\begin{equation}\label{eq:majorant-decomp}
		\abs{\Omega}
		\leq
		\sum_jc_ju_{B_j}
		\qquad \sigma\text{-a.e.},
	\end{equation}
	and
	\begin{equation}\label{eq:majorant-coefficient-sum}
		\sum_j\sqrt{c_j}
		\lesssim_n
		\rho_{\Cspace}(\Omega)+\varepsilon.
	\end{equation}
\end{proposition}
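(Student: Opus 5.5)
Fix $\varepsilon>0$ and choose a representative $\omega=\Omega$ $\sigma$-a.e. with $I(\omega)\le\rho_{\Cspace}(\Omega)+\varepsilon$. The plan is to decompose $\omega$ dyadically by level sets and cover each level set almost optimally by geodesic balls. Let $F_k=F_k(\omega)=\{\abs\omega>2^k\}$, so that
\[
 \abs\omega\le\sum_{k\in\mathbb Z}2^{k+1}\1_{F_k\setminus F_{k+1}}\le\sum_{k\in\mathbb Z}2^{k+1}\1_{F_k}
\]
at every point where $\omega$ is finite. Since $\Omega\in L^1$ by \eqref{eq:C-L1}, this is $\sigma$-a.e. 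By \eqref{eq:dyadic-layer}, $\sum_k2^{k/2}\Hc(F_k)\lesssim I(\omega)$. In particular $\Hc(F_k)<\infty$ for each $k$; the terms with $\Hc(F_k)=0$ need separate care, and I return to them below.

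For each $k$ with $\Hc(F_k)>0$, pick a covering $F_k\subset\bigcup_iB(\theta_{k,i},r_{k,i})$ with $0<r_{k,i}\le1$ and
\[
 \sum_ir_{k,i}^{(n-1)/2}\le2\Hc(F_k).
\]
Then, for $\sigma$-a.e. $\theta$,
\[
 2^{k+1}\1_{F_k}\le\sum_i2^{k+1}\1_{B_{k,i}}=\sum_ic_{k,i}u_{B_{k,i}},\qquad c_{k,i}:=2^{k+1}\sig(B_{k,i}).
\]
Lemma \ref{lem:ahlfors} gives $\sqrt{c_{k,i}}\lesssim_n2^{k/2}r_{k,i}^{(n-1)/2}$, so
\[
 \sum_i\sqrt{c_{k,i}}\lesssim_n2^{k/2}\Hc(F_k).
\]
Summing over $k$ gives \eqref{eq:majorant-decomp} together with
\[
 \sum_{k,i}\sqrt{c_{k,i}}\lesssim_n\sum_k2^{k/2}\Hc(F_k)\lesssim I(\omega)\le\rho_{\Cspace}(\Omega)+\varepsilon,
\]
which is \eqref{eq:majorant-coefficient-sum}. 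The whole family of balls is countable.

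The step I expect to need the most care is the levels where $\Hc(F_k)=0$, and more generally the fact that near-optimal coverings may have total cost $0$. Since $\Hc(F_k)\gtrsim\sig(F_k)^{1/2}$ by Proposition \ref{prop:content-estimates}, such $F_k$ are $\sigma$-null and can simply be discarded, because the majorant only has to hold $\sigma$-a.e. For levels with $\Hc(F_k)>0$, the factor $2$ in the covering choice is legitimate, and the covering can be taken countable by definition of the content.

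The set where $\omega=\infty$ lies in every $F_k$. Its $\sigma$-measure is bounded by $\sig(F_k)\lesssim\Hc(F_k)^2\to0$ as $k\to\infty$, since the series $\sum_k2^{k/2}\Hc(F_k)$ converges; hence this set is null and is harmless.

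Finally, the constant in \eqref{eq:dyadic-layer} is absolute, so all implicit constants depend only on $n$, via Lemma \ref{lem:ahlfors}.
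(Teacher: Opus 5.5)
Your proof is correct and follows the paper's argument. You pick a near-optimal representative $\omega$, cover the dyadic level sets almost optimally by geodesic balls, set $c=2^{k+1}\sig(B)$, and sum using Lemma \ref{lem:ahlfors} and \eqref{eq:dyadic-layer}. The differences are cosmetic: you cover $F_k$ with multiplicative slack $2\Hc(F_k)$ and discard the $\sigma$-null levels where $\Hc(F_k)=0$, whereas the paper covers the slabs $E_k=\{2^k<\abs\omega\le2^{k+1}\}$ with additive slack $\varepsilon_k=I(\omega)2^{-|k|-2}2^{-k/2}$ and treats the case $I(\omega)=0$ separately instead.
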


\begin{proof}
	By the definition of $\rho_{\Cspace}(\Omega)$, one may choose a measurable function $\omega$ such that
	\[
	\omega=\Omega
	\qquad \sigma\text{-a.e.},
	\qquad
	I(\omega)\leq\rho_{\Cspace}(\Omega)+\varepsilon.
	\]
	If $I(\omega)=0$, then \eqref{eq:C-L1} implies that $\Omega=0$ $\sigma$-almost everywhere, and the empty family of geodesic balls suffices. Henceforth assume that
	\[
	0<I(\omega)<\infty.
	\]
	Again by \eqref{eq:C-L1}, $\omega\in L^1(\Sph)$, and hence $\abs{\omega}<\infty$ $\sigma$-almost everywhere.
	
	For each $k\in\mathbb Z$, let
	\[
	E_k
	:=
	\bigl\{
	\theta\in\Sph:
	2^k<\abs{\omega(\theta)}\leq2^{k+1}
	\bigr\},
	\]
	and set
	\[
	\varepsilon_k
	:=
	I(\omega)\,2^{-|k|-2}2^{-k/2}.
	\]
	By the definition of Hausdorff content, for each $k\in\mathbb Z$ there exist an at most countable index set $J_k$ and geodesic balls
	\[
	B_{k,j}
	=
	B(\theta_{k,j},r_{k,j}),
	\qquad j\in J_k,
	\]
	such that
	\[
	E_k
	\subset
	\bigcup_{j\in J_k}B_{k,j}
	\]
	and
	\begin{equation}\label{eq:Ek-ball-cover}
		\sum_{j\in J_k}r_{k,j}^{(n-1)/2}
		\leq
		\Hc(E_k)+\varepsilon_k
		\leq
		\Hc(F_k(\omega))+\varepsilon_k.
	\end{equation}
	Define
	\[
	c_{k,j}
	:=
	2^{k+1}\sig(B_{k,j}),
	\qquad k\in\mathbb Z,\quad j\in J_k.
	\]
	Since
	\[
	u_{B_{k,j}}
	=
	\frac{\1_{B_{k,j}}}{\sig(B_{k,j})},
	\]
	we have
	\begin{equation}\label{eq:ckj-atom}
		c_{k,j}u_{B_{k,j}}
		=
		2^{k+1}\1_{B_{k,j}}.
	\end{equation}
	
	The level sets $\{E_k\}_{k\in\mathbb Z}$ are pairwise disjoint, and
	\[
	\bigcup_{k\in\mathbb Z}E_k
	=
	\bigl\{
	\theta\in\Sph:
	0<\abs{\omega(\theta)}<\infty
	\bigr\}.
	\]
	Therefore, for $\sigma$-almost every $\theta\in\Sph$,
	\begin{align}
		\abs{\omega(\theta)}
		&=
		\sum_{k\in\mathbb Z}
		\abs{\omega(\theta)}\1_{E_k}(\theta)
		\notag\\
		&\leq
		\sum_{k\in\mathbb Z}
		2^{k+1}\1_{E_k}(\theta)
		\notag\\
		&\leq
		\sum_{k\in\mathbb Z}
		\sum_{j\in J_k}
		2^{k+1}\1_{B_{k,j}}(\theta)
		\notag\\
		&=
		\sum_{k\in\mathbb Z}
		\sum_{j\in J_k}
		c_{k,j}u_{B_{k,j}}(\theta),
		\label{eq:double-majorant}
	\end{align}
	where the third line uses
	\[
	\1_{E_k}
	\leq
	\sum_{j\in J_k}\1_{B_{k,j}},
	\]
	and the last line uses \eqref{eq:ckj-atom}. Since $\omega=\Omega$ $\sigma$-almost everywhere, \eqref{eq:double-majorant} gives
	\begin{equation}\label{eq:double-majorant-Omega}
		\abs{\Omega}
		\leq
		\sum_{k\in\mathbb Z}
		\sum_{j\in J_k}
		c_{k,j}u_{B_{k,j}}
		\qquad \sigma\text{-a.e.}
	\end{equation}
	
	We next estimate the coefficient sum. By the Ahlfors regularity of the unit sphere,
	\[
	\sig(B_{k,j})^{1/2}
	\lesssim_n
	r_{k,j}^{(n-1)/2}.
	\]
	Together with \eqref{eq:Ek-ball-cover}, this gives
	\begin{align*}
		\sum_{k\in\mathbb Z}\sum_{j\in J_k}\sqrt{c_{k,j}}
		&=
		\sum_{k\in\mathbb Z}
		2^{(k+1)/2}
		\sum_{j\in J_k}\sig(B_{k,j})^{1/2}\\
		&\lesssim_n
		\sum_{k\in\mathbb Z}
		2^{k/2}
		\sum_{j\in J_k}r_{k,j}^{(n-1)/2}\\
		&\leq
		C_n\sum_{k\in\mathbb Z}
		2^{k/2}\Hc(F_k(\omega))
		+
		C_n\sum_{k\in\mathbb Z}
		2^{k/2}\varepsilon_k.
	\end{align*}
	By \eqref{eq:dyadic-layer},
	\[
	\sum_{k\in\mathbb Z}
	2^{k/2}\Hc(F_k(\omega))
	\lesssim_n
	I(\omega).
	\]
	Moreover, by the choice of $\varepsilon_k$,
	\[
	\begin{aligned}
		\sum_{k\in\mathbb Z}2^{k/2}\varepsilon_k
		&=
		I(\omega)
		\sum_{k\in\mathbb Z}2^{-|k|-2}\\
		&\lesssim I(\omega).
	\end{aligned}
	\]
	Consequently,
	\begin{equation}\label{eq:double-coefficient-sum}
		\sum_{k\in\mathbb Z}\sum_{j\in J_k}\sqrt{c_{k,j}}
		\lesssim_n
		I(\omega)
		\leq
		\rho_{\Cspace}(\Omega)+\varepsilon.
	\end{equation}
	
	Finally, the index set
	\[
	\mathcal J
	:=
	\bigcup_{k\in\mathbb Z}
	\bigl(\{k\}\times J_k\bigr)
	\]
	is at most countable. Enumerate it as
	\[
	\mathcal J
	=
	\{(k(m),j(m)):m\geq1\},
	\]
	and define
	\[
	B_m:=B_{k(m),j(m)},
	\qquad
	c_m:=c_{k(m),j(m)}.
	\]
	Since both series have nonnegative terms, they may be reindexed without altering their sums. Hence,
	\[
	\abs{\Omega}
	\leq
	\sum_{m=1}^{\infty}c_mu_{B_m}
	\qquad \sigma\text{-a.e.},
	\]
	and
	\[
	\sum_{m=1}^{\infty}\sqrt{c_m}
	\lesssim_n
	\rho_{\Cspace}(\Omega)+\varepsilon.
	\]
	This proves \eqref{eq:majorant-decomp} and \eqref{eq:majorant-coefficient-sum}.
\end{proof}

\begin{proposition}\label{prop:ball-majorant-converse}
Suppose that
\[
 \abs\Omega\le\sum_jc_ju_{B_j}
 \quad \sigma\text{-a.e.},
 \qquad c_j\ge0,
 \qquad\sum_j\sqrt{c_j}<\infty,
\]
Then $\Omega\in\Cspace(\Sph)$, and
\[
 \rho_{\Cspace}(\Omega)
 \lesssim_n\sum_j\sqrt{c_j}.
\]
\end{proposition}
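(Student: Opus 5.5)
The plan is to pick a concrete representative $\omega:=\sum_jc_ju_{B_j}$ and estimate $I(\omega)$ directly. Since $\abs\Omega\le\omega$ $\sigma$-a.e., we set $\widetilde\Omega:=\Omega\1_{\{\abs\Omega\le\omega\}}$. This function equals $\Omega$ $\sigma$-a.e. and satisfies $\abs{\widetilde\Omega}\le\omega$ everywhere. Monotonicity of the Choquet integral, which holds because $\Hc$ is monotone on level sets, then gives
\[
\rho_{\Cspace}(\Omega)\le I(\widetilde\Omega)\le I(\omega).
\]
So it suffices to prove $I(\omega)\lesssim_n\sum_j\sqrt{c_j}$.

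Next we use the pointwise subadditivity of the square root for countable sums of nonnegative terms:
\[
\omega^{1/2}=\Bigl(\sum_jc_ju_{B_j}\Bigr)^{1/2}\le\sum_j\sqrt{c_j}\,u_{B_j}^{1/2}=\sum_j\sqrt{c_j}\,\sig(B_j)^{-1/2}\1_{B_j}.
\]
Applying Lemma \ref{lem:choquet-subadd} to the functions $u_j:=\sqrt{c_j}\sig(B_j)^{-1/2}\1_{B_j}$, together with monotonicity, yields
\[
I(\omega)=\int_{\Sph}\omega^{1/2}\dd\Hc\le C_n\sum_j\sqrt{c_j}\,\sig(B_j)^{-1/2}\Hc(B_j).
\]
Here we used that the Choquet integral of $A\1_B$ equals $A\,\Hc(B)$, which follows directly from the level-set definition.

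Finally, for a ball $B_j=B(\theta_j,r_j)$ with $0<r_j\le1$, Proposition \ref{prop:content-estimates} gives $\Hc(B_j)\lesssim_n r_j^{(n-1)/2}$, and Lemma \ref{lem:ahlfors} gives $\sig(B_j)^{1/2}\gtrsim_n r_j^{(n-1)/2}$. Hence $\sig(B_j)^{-1/2}\Hc(B_j)\lesssim_n1$ uniformly in $j$, and we conclude $I(\omega)\lesssim_n\sum_j\sqrt{c_j}$.

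The only point needing care, and so the main obstacle, is the radius convention. The atoms in Definition \ref{def:atomic} are "geodesic balls", which might be allowed radii larger than $1$, whereas the notation restricts to $0<r\le1$. If larger balls are permitted, we handle them by covering $B_j$ with the $N_n$ balls of radius $1/2$ used in the proof of Lemma \ref{lem:choquet-subadd}. For such a ball $\sig(B_j)\gtrsim_n1$, so $\Hc(B_j)\le N_n2^{-(n-1)/2}\lesssim_n\sig(B_j)^{1/2}$, and the same uniform bound holds. This also confirms that $\Omega\in\Cspace(\Sph)$, since the right-hand side $\sum_j\sqrt{c_j}$ is finite.
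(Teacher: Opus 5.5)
Your proposal is correct and follows the paper's proof essentially step for step: pass to a representative dominated everywhere by $\sum_j c_j u_{B_j}$, use $(\sum_j c_ju_{B_j})^{1/2}\le\sum_j\sqrt{c_j}\,u_{B_j}^{1/2}$, apply the quasi-subadditivity of Lemma~\ref{lem:choquet-subadd}, and bound $\mathcal H^{(n-1)/2}_1(B_j)/\sigma(B_j)^{1/2}\lesssim_n 1$. Your explicit choice $\widetilde\Omega=\Omega\mathbf 1_{\{|\Omega|\le\omega\}}$ and your remark on radii larger than $1$ only make precise points that the paper leaves implicit.
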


\begin{proof}
Choose a measurable function $\omega=\Omega$ $\sigma$-almost everywhere and for which the above inequality holds. Since
\[
 \abs\omega^{1/2}
 \le\left(\sum_jc_ju_{B_j}\right)^{1/2}
 \le\sum_j\sqrt{c_j}\,u_{B_j}^{1/2},
\]
Lemma \ref{lem:choquet-subadd} gives
\[
 I(\omega)
 \lesssim_n
 \sum_j\sqrt{c_j}
 \int_{\Sph}u_{B_j}^{1/2}\dd\Hc.
\]
Moreover,
\[
 \int_{\Sph}u_{B_j}^{1/2}\dd\Hc
 =\frac{\Hc(B_j)}{\sig(B_j)^{1/2}}
 \asymp_n1.
\]
Hence
\[
 \rho_{\Cspace}(\Omega)\le I(\omega)
 \lesssim_n\sum_j\sqrt{c_j}.
\]
\end{proof}

\begin{proposition}\label{prop:exact-atoms}
One has
\begin{equation}\label{eq:exact-atomic-norm}
 \norm{\Omega}_{\Aspace}
 =
 \inf
 \left\{
 \left(\sum_j\sqrt{c_j}\right)^2:
 \Omega=\sum_jc_ja_j\ \text{in }L^1(\Sph),\
 c_j\ge0,\
 a_j\text{ is a geodesic-ball atom}
 \right\}.
\end{equation}
\end{proposition}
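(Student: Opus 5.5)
Write $\mathcal E(\Omega)$ for the right-hand side of \eqref{eq:exact-atomic-norm}. The plan is to prove $\norm{\Omega}_{\Aspace}\le\mathcal E(\Omega)$ and $\mathcal E(\Omega)\le\norm{\Omega}_{\Aspace}$ separately.

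\emph{The easy direction.} Suppose $\Omega=\sum_jc_ja_j$ in $L^1(\Sph)$, where $c_j\ge0$, $\sum_j\sqrt{c_j}<\infty$, and each $a_j$ is an atom with $\abs{a_j}\le u_{B_j}$. Pass to a subsequence of partial sums converging $\sigma$-a.e. This gives
\[
\abs\Omega\le\sum_jc_j\abs{a_j}\le\sum_jc_ju_{B_j}\quad\sigma\text{-a.e.}
\]
So the same family $\{(c_j,B_j)\}$ is admissible in \eqref{eq:atomic-majorant-norm}, and $\norm{\Omega}_{\Aspace}\le\mathcal E(\Omega)$.

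\emph{The converse direction.} Fix an admissible majorant $\abs\Omega\le\sum_jc_ju_{B_j}$ with $\sum_j\sqrt{c_j}<\infty$, and set $G:=\sum_jc_ju_{B_j}$.

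First, check that $G$ is integrable. Since $\sqrt{c_j}\le\sum_i\sqrt{c_i}$,
\[
\sum_jc_j\le\Bigl(\sum_j\sqrt{c_j}\Bigr)^2<\infty .
\]
Hence $G\in L^1$ with $\norm G_1=\sum_jc_j$, and in particular $G<\infty$ a.e.

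Next, define the quotient weights
\[
a_j:=\frac{\Omega\,u_{B_j}}{G}\quad\text{on }\{G>0\},\qquad a_j:=0\ \text{elsewhere}.
\]
On $\{G=0\}$ we have $\Omega=0$ a.e., so nothing is lost there. Each $a_j$ is measurable. The bound $\abs\Omega\le G$ gives $\abs{a_j}\le u_{B_j}$, so $a_j$ is a geodesic-ball atom.

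Pointwise a.e. we have
\[
\sum_jc_ja_j=\frac{\Omega}{G}\sum_jc_ju_{B_j}=\Omega .
\]
To upgrade this to convergence in $L^1(\Sph)$, use $\bigl|\sum_{j\le N}c_ja_j\bigr|\le G\in L^1$ and dominated convergence. Thus $\Omega=\sum_jc_ja_j$ in $L^1$ with the same coefficients $c_j$, so $\mathcal E(\Omega)\le(\sum_j\sqrt{c_j})^2$. Taking the infimum over admissible majorants gives $\mathcal E(\Omega)\le\norm{\Omega}_{\Aspace}$.

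\emph{Infinite cases.} If no admissible majorant exists, both sides equal $+\infty$ by the first direction, so the identity holds there as well.

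The only delicate point is the measurability and a.e.\ bookkeeping in defining $a_j$, specifically division by $G$ on the null set where $G=\infty$. This is handled by the integrability of $G$ established above.
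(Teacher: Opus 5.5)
Your proposal is correct and follows the paper's proof essentially step by step. One inequality comes from a.e. convergence of a subsequence of partial sums, and the other from the quotient atoms $a_j=\Omega u_{B_j}/G$, the only cosmetic difference being that you get $L^1$ convergence of $\sum_j c_ja_j$ by dominated convergence with majorant $G$, whereas the paper uses absolute convergence via $\sum_j c_j\norm{a_j}_{L^1}\le\sum_j c_j<\infty$.
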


\begin{proof}
We first show that the infimum in \eqref{eq:atomic-majorant-norm} is no larger than the infimum in \eqref{eq:exact-atomic-norm}. It suffices to consider representations for which the coefficient sum on the right‑hand side is finite, namely,
\[
 \Omega=\sum_jc_ja_j\quad\text{in }L^1(\Sph),
 \qquad
 \sum_j\sqrt{c_j}<\infty.
\]
In this case,
\(
 \sum_jc_j
 \le
 \left(\sum_j\sqrt{c_j}\right)^2
 <\infty.
\)
Since every geodesic-ball atom satisfies $\norm{a_j}_{L^1}\le1$,
\(
 \sum_jc_j\norm{a_j}_{L^1}<\infty,
\)
and hence the series converges absolutely in $L^1$. A subsequence of its partial sums therefore converges to $\Omega$ almost everywhere, and
\[
 \abs\Omega
 \le\sum_jc_j\abs{a_j}
 \le\sum_jc_ju_{B_j}
 \quad\text{a.e.}
\]
Thus the infimum in \eqref{eq:atomic-majorant-norm} is no larger than that in \eqref{eq:exact-atomic-norm}.

Conversely, suppose that
\(
 \abs\Omega\le G:=\sum_jc_ju_{B_j}
 \quad\text{a.e.}
\)
Since $\sum_jc_j\le(\sum_j\sqrt{c_j})^2<\infty$,
$G\in L^1$. Define
\[
 a_j(\theta)
 :=
 \begin{cases}
 \dfrac{\Omega(\theta)}{G(\theta)}u_{B_j}(\theta),
 &G(\theta)>0,\\[1ex]
 0,&G(\theta)=0.
 \end{cases}
\]
Then $\abs{a_j}\le u_{B_j}$, and
\(
 \sum_jc_ja_j=\Omega
 \quad\text{a.e.}
\)
Moreover,
\(
 \sum_jc_j\norm{a_j}_{L^1}
 \le\sum_jc_j<\infty,
\)
so the series converges absolutely in $L^1$. This shows that the two infima are coincide.
\end{proof}

\begin{proof}[Proof of Theorem \ref{thm:atomic-main}]
Proposition \ref{prop:ball-majorant} gives
\[
 \norm{\Omega}_{\Aspace}^{1/2}
 \lesssim_n\rho_{\Cspace}(\Omega).
\]
Proposition \ref{prop:ball-majorant-converse} gives the reverse estimate. Therefore,
\[
 \norm{\Omega}_{\Aspace}\asymp_n\norm{\Omega}_{\Cspace}.
\]

By definition, $\norm{\cdot}_{\Aspace}^{1/2}$ satisfies the triangle inequality, and if $\abs f\le\abs g$ almost everywhere, then $\norm f_{\Aspace}\le\norm g_{\Aspace}$. Moreover,
\begin{equation}\label{eq:A-L1}
 \norm{\Omega}_{L^1}
 \le\norm{\Omega}_{\Aspace},
\end{equation}
since 
\[
 \norm{\Omega}_{L^1}
 \le\sum_jc_j
 \le\left(\sum_j\sqrt{c_j}\right)^2.
\]

We now turn to the proof of completeness. Let $\{\Omega_m\}$ be a Cauchy sequence with respect to $\norm{\cdot}_{\Aspace}^{1/2}$. Choose a subsequence $\{\Omega_{m_k}\}$ such that
\(
 \norm{\Omega_{m_{k+1}}-\Omega_{m_k}}_{\Aspace}^{1/2}
 \le2^{-k}.
\)
By Proposition \ref{prop:exact-atoms}, we may write $h_k:=\Omega_{m_{k+1}}-\Omega_{m_k}$ as
\(
 h_k=\sum_jc_{k,j}a_{k,j}
\)
with
\(
 \sum_j\sqrt{c_{k,j}}\le2^{1-k}.
\)
Thus
\[
 \norm{h_k}_{L^1}
 \le\sum_jc_{k,j}
 \le\left(\sum_j\sqrt{c_{k,j}}\right)^2
 \le4^{1-k}.
\]
Hence $\sum_kh_k$ converges in $L^1$. Let
\(
 \Omega:=\Omega_{m_1}+\sum_{k=1}^\infty h_k.
\)
Combining all atomic representations in the tail yields
\[
 \norm{\Omega-\Omega_{m_N}}_{\Aspace}^{1/2}
 \le\sum_{k\ge N}\sum_j\sqrt{c_{k,j}}
 \lesssim2^{-N}.
\]
Thus the subsequence converges to $\Omega$ in $\Aspace$, and so does the original Cauchy sequence. This establishes that $\Aspace$ is complete. By equivalence of quasi-norms, $\Cspace$ is also complete. The triangle inequality above also shows that $\norm{\cdot}_{\Aspace}$ is a $1/2$-norm. Consequently, $\Cspace$ is a complete quasi-Banach space with respect to $\norm{\cdot}_{\Cspace}$ and admits an equivalent $1/2$-norm.

Finally, for every exact atomic representation $\Omega=\sum_jc_ja_j$, the finite partial sums satisfy
\[
 \norm{\Omega-\sum_{j=1}^Nc_ja_j}_{\Aspace}^{1/2}
 \le\sum_{j>N}\sqrt{c_j}\longrightarrow0.
\]
Hence finite atomic sums are dense.
\end{proof}

\subsection{Comparison with Classical Kernel Spaces}\label{sec:comparisons}

In this subsection we prove Theorem \ref{thm:comparison-main}. We first locate $\Cspace$ within the Lorentz scale and show that its embedding into $L^{1,1/2}$ is strict. We then construct a kernel that belongs to $\Cspace$ but belongs to neither $L\log^+\! L$ nor any $L^p$ for $p>1$.

\subsubsection{Basic embeddings}

Here $L^{1,1/2}(\Sph)$ is equipped with the distribution-function quasi-norm from Lemma \ref{lem:lorentz-L1}.

\begin{proposition}\label{prop:lorentz-embedding}
There are continuous embeddings
\[
 L^\infty(\Sph)\subset\Cspace(\Sph)
 \subset L^{1,1/2}(\Sph)\subset L^1(\Sph).
\]
More precisely,
\[
 \norm{\Omega}_{L^{1,1/2}}
 \lesssim_n\norm{\Omega}_{\Cspace},
 \qquad
 \norm{\Omega}_{L^1}
 \lesssim_n\norm{\Omega}_{\Cspace}.
\]
\end{proposition}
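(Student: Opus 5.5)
The embeddings split into three pieces, each treated separately.

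For $L^\infty\subset\Cspace$, I would take any representative $\omega$ of $\Omega$ with $\abs{\omega}\le\norm{\Omega}_{L^\infty}$ everywhere; modifying on a $\sigma$-null set is permitted by the infimum in \eqref{eq:rhoC}. Then $\abs\omega\le\norm{\Omega}_{L^\infty}\1_{\Sph}$, and $\Sph$ is covered by the $N_n$ geodesic balls of radius $1/2$ from the proof of Lemma~\ref{lem:choquet-subadd}. Hence $\Hc(\Sph)\lesssim_n1$, which gives
\[
I(\omega)\le\norm{\Omega}_{L^\infty}^{1/2}\Hc(\Sph)\lesssim_n\norm{\Omega}_{L^\infty}^{1/2}.
\]
Squaring yields $\norm{\Omega}_{\Cspace}\lesssim_n\norm{\Omega}_{L^\infty}$. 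Alternatively, one can write $\abs\Omega\le\sum_\nu\norm{\Omega}_\infty\sig(B_\nu)u_{B_\nu}$ and invoke Proposition~\ref{prop:ball-majorant-converse}.

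The main step is $\norm{\Omega}_{L^{1,1/2}}\lesssim_n\norm{\Omega}_{\Cspace}$. Fix a representative $\omega=\Omega$ $\sigma$-a.e. The distribution function of $\Omega$ with respect to $\sig$ depends only on the equivalence class, so $\sig(\{\abs\Omega>t\})=\sig(\{\abs\omega>t\})$. By the second estimate in \eqref{eq:ball-content}, $\sig(E)^{1/2}\lesssim_n\Hc(E)$ for every measurable $E$. Applying this to $E=\{\abs\omega>t\}$ and integrating against $t^{-1/2}\dd t$, then using \eqref{eq:continuous-layer}, gives
\[
\int_0^\infty t^{-1/2}\sig(\{\abs\Omega>t\})^{1/2}\dd t\lesssim_n\int_0^\infty t^{-1/2}\Hc(\{\abs\omega>t\})\dd t=2I(\omega).
\]
Taking the infimum over representatives and squaring gives the claimed inequality. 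The remaining embedding $L^{1,1/2}\subset L^1$ is Lemma~\ref{lem:lorentz-L1}, and composing the two bounds recovers \eqref{eq:C-L1}.

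I do not expect a genuine obstacle. The only points that need care are that the $L^{1,1/2}$ quasi-norm is insensitive to the choice of representative while the Choquet integral is not, which the ordering of ``bound, then take the infimum'' handles, and the measurability of the level sets. Strictness of $\Cspace\subsetneq L^{1,1/2}$ is not part of this proposition and is left for later.
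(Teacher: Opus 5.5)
Your proposal is correct and follows the paper's proof essentially step for step. The three steps are the bound $I(\omega)\le\Hc(\Sph)\norm{\Omega}_{L^\infty}^{1/2}$ for the first embedding, the estimate $\sig(E)^{1/2}\lesssim_n\Hc(E)$ applied to level sets with \eqref{eq:continuous-layer} (bounding before taking the infimum over representatives) for the second, and Lemma~\ref{lem:lorentz-L1} for the last; your observation that $\Hc(\Sph)\lesssim_n1$ via the finite covering from the proof of Lemma~\ref{lem:choquet-subadd} just makes explicit the constant that the paper leaves implicit.
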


\begin{proof}
If $\Omega\in L^\infty$, then
\[
 \rho_{\Cspace}(\Omega)
 \le \Hc(\Sph)\norm{\Omega}_{L^\infty}^{1/2}<\infty.
\]
Hence $L^\infty\subset\Cspace$.

Conversely, if $\omega=\Omega$ almost everywhere, then Proposition \ref{prop:content-estimates} and the level-set formula \eqref{eq:continuous-layer} give
\begin{align*}
 I(\omega)
 &\gtrsim_n
 \frac12\int_0^\infty
 t^{-1/2}\sig(\{\abs\omega>t\})^{1/2}\dd t\\
 &=
 \frac12\int_0^\infty
 t^{-1/2}\sig(\{\abs\Omega>t\})^{1/2}\dd t.
\end{align*}
Taking the infimum over all measurable functions $\omega=\Omega$ $\sigma$-almost everywhere and then squaring yields $\Cspace\subset L^{1,1/2}$. Finally, Lemma \ref{lem:lorentz-L1} gives $L^{1,1/2}\subset L^1$.
\end{proof}

\subsubsection{Strictness of the Lorentz embedding}

The preceding proposition gives the continuous embedding
$\Cspace(\Sph)\subset L^{1,1/2}(\Sph)$. We next show that this inclusion is strict. The construction uses sets of small surface measure whose $(n-1)/2$-dimensional Hausdorff content remains uniformly positive.

\begin{proposition}\label{prop:C-strict-Lorentz}
There exists a nonnegative measurable function
\[
 w\in L^{1,1/2}(\Sph)\setminus\Cspace(\Sph).
\]
Consequently,
\[
 \Cspace(\Sph)\subsetneq L^{1,1/2}(\Sph).
\]
\end{proposition}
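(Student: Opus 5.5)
The plan is to build $w$ from a sequence of sets $E_k\subset\Sph$ whose surface measure tends to zero very fast while their $(n-1)/2$-dimensional unit-scale content stays bounded below. We then put large values on these sets, with the heights chosen so that the $L^{1,1/2}$ sum converges while the dyadic Choquet sum \eqref{eq:dyadic-layer} diverges.

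\emph{Step 1 (the sets).} Write $\alpha=(n-1)/2$. For a small parameter $s\in(0,1)$, let $\{\theta_\nu\}_{\nu=1}^{N}$ be a maximal $s$-separated subset of $\Sph$, so that $N\asymp_n s^{-(n-1)}$ by the volume argument from the proof of Lemma~\ref{lem:choquet-subadd}. Set
\[
 E(s):=\bigcup_{\nu}B(\theta_\nu,s^2).
\]
The balls $B(\theta_\nu,s^2)$ are pairwise disjoint, so Lemma~\ref{lem:ahlfors} gives
\[
 \sig(E(s))\asymp_n N s^{2(n-1)}\asymp_n s^{n-1}.
\]

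\emph{Step 2 (content lower bound).} We show $\Hc(E(s)\setminus Z)\gtrsim_n 1$ for every $\sig$-null set $Z$, via a mass distribution principle. Let $\mu$ be the probability measure that is $\sig$ restricted to $E(s)$, normalized so that each small ball carries mass $1/N$; then $\mu\ll\sig$. We claim that $\mu(B(\theta,r))\lesssim_n r^{\alpha}$ for all $0<r\le1$, and check it in three ranges.
\begin{itemize}
\item If $r\le s^2$, the ball meets $O(1)$ of the small balls and takes a $\lesssim (r/s^2)^{n-1}$ fraction of each. Hence $\mu(B(\theta,r))\lesssim s^{n-1}(r/s^2)^{n-1}\le r^{\alpha}$, using $r^{\alpha}\le s^{n-1}$ in this range.
\item If $s^2<r\le s$, the ball meets $O(1)$ small balls, so $\mu(B(\theta,r))\lesssim 1/N\asymp s^{n-1}\le r^{\alpha}$.
\item If $r>s$, the ball meets $\lesssim (r/s)^{n-1}$ small balls, so $\mu(B(\theta,r))\lesssim r^{n-1}\le r^{\alpha}$.
\end{itemize}
Now take any admissible cover of $E(s)\setminus Z$ by balls $B(\theta_j,r_j)$. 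Since $\mu(Z)=0$,
\[
 1=\mu(E(s)\setminus Z)\le\sum_j\mu(B(\theta_j,r_j))\lesssim_n\sum_j r_j^{\alpha}.
\]
The whole construction rests on this estimate, and in particular on the choice of radius $s^2$ for the small balls, which is what makes the first range work.

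\emph{Step 3 (the function).} For $k\ge1$ choose $s_k$ with $s_k^{n-1}\asymp 2^{-3k}$, put $E_k:=E(s_k)$, and define
\[
 w:=\sum_{k\ge1}2^k\1_{E_k}.
\]
\emph{Membership in $L^{1,1/2}$.} For $2^{m-1}<t\le2^m$ we have
\[
 \{w>t\}\subset\bigcup_{k\ge m-1}E_k\cup\{w>t\text{ with only }k<m-1\text{ terms}\}.
\]
The partial sum over $k<m-1$ is at most $2^{m-1}<t$, so the second set is empty. Hence
\[
 \sig(\{w>t\})\lesssim\sum_{k\ge m-1}2^{-3k}\lesssim 2^{-3m},
\]
and therefore
\[
 \sum_m 2^{m/2}\,\sig(\{w>2^m\})^{1/2}\lesssim\sum_m 2^{m/2}2^{-3m/2}<\infty.
\]
By the dyadic comparison used in Lemma~\ref{lem:lorentz-L1}, this gives $w\in L^{1,1/2}$.

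\emph{Failure of membership in $\Cspace$.} Let $\widetilde w=w$ $\sig$-a.e. Then $\{\widetilde w>2^{k-1}\}\supset E_k\setminus Z$ for a null set $Z$. By Step 2 and \eqref{eq:dyadic-layer},
\[
 I(\widetilde w)\gtrsim\sum_k 2^{(k-1)/2}\Hc(E_k\setminus Z)\gtrsim_n\sum_k 2^{k/2}=\infty.
\]
Taking the infimum over all such $\widetilde w$ gives $\rho_{\Cspace}(w)=\infty$, which completes the construction.
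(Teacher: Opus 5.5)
Your proposal is correct and follows essentially the same route as the paper. The paper also uses an $s$-separated net with balls of radius $s^2$, and a Frostman-type measure $\ll\sigma$ checked on the three ranges $r\le s^2$, $s^2<r\le s$, $r>s$ to get null-set-robust content $\gtrsim_n 1$, then stacks these sets at heights $2^k$ with rapidly decaying measure. The differences are cosmetic: you use $\sum_k 2^k\1_{E_k}$ instead of $\sup_k 2^k\1_{E_k}$, decay $2^{-3k}$ instead of $2^{-4k}$, and the dyadic rather than the continuous layer formula for the divergence. None of these affects the argument.
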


\begin{proof}
We first construct a family of sets with arbitrarily small surface measure but uniformly positive $(n-1)/2$-dimensional Hausdorff content. Fix $0<r<r_0$, where $r_0>0$ is sufficiently small, and choose an $r$-separated $2r$-net
\[
 \{\eta_1,\ldots,\eta_M\}\subset\Sph.
\]
that is,
\[
d_{\Sph}(\eta_i,\eta_j)\ge r \quad (i\ne j),
\qquad
\Sph\subset \bigcup_{j=1}^{M} B(\eta_j,2r).
\]
By Lemma \ref{lem:ahlfors} and the standard packing estimate,
\[
 M\asymp_n r^{-(n-1)}.
\]
Set
\[
 s:=r^2,
 \qquad
 B_j:=B(\eta_j,s),
 \qquad
 E_r:=\bigcup_{j=1}^M B_j.
\]
After decreasing $r_0$ if necessary, the balls $B_j$ are pairwise disjoint. Hence Lemma \ref{lem:ahlfors} gives
\begin{equation}\label{eq:Er-sigma}
 \sig(E_r)
 \asymp_n M s^{n-1}
 \asymp_n r^{n-1}.
\end{equation}
On the other hand, using the balls $B_j$ themselves as a covering,
\begin{equation}\label{eq:Er-H-upper}
 \Hc(E_r)
 \le \sum_{j=1}^M s^{(n-1)/2}
 \asymp_n 1.
\end{equation}

For the reverse estimate, define a probability measure supported on $E_r$ by
\[
 \nu_r
 :=
 \frac1M\sum_{j=1}^M
 \frac{\sig|_{B_j}}{\sig(B_j)}.
\]
We claim that
\begin{equation}\label{eq:nu-r-frostman}
 \nu_r(B(\xi,\rho))\lesssim_n \rho^{(n-1)/2},
 \qquad \xi\in\Sph,\quad 0<\rho\le1.
\end{equation}
Indeed, if $0<\rho\le s$, then $B(\xi,\rho)$ meets only $O_n(1)$ of the balls $B_j$, and Lemma \ref{lem:ahlfors} yields
\[
 \nu_r(B(\xi,\rho))
 \lesssim_n
 \frac1M\frac{\rho^{n-1}}{s^{n-1}}
 \asymp_n
 \frac{\rho^{n-1}}{r^{n-1}}
 \le \rho^{(n-1)/2},
\]
since $\rho\le r^2$. If $s<\rho<r$, the $r$-separation again implies that $B(\xi,\rho)$ meets only $O_n(1)$ of the balls $B_j$, and therefore
\[
 \nu_r(B(\xi,\rho))
 \lesssim_n M^{-1}
 \asymp_n r^{n-1}
 \le \rho^{(n-1)/2},
\]
because $\rho>r^2$. Finally, if $r\le\rho\le1$, the packing estimate gives
\[
 \#\{j:B_j\cap B(\xi,\rho)\neq\varnothing\}
 \lesssim_n \left(\frac{\rho}{r}\right)^{n-1},
\]
and hence
\[
 \nu_r(B(\xi,\rho))
 \lesssim_n
 \frac1M\left(\frac{\rho}{r}\right)^{n-1}
 \asymp_n \rho^{n-1}
 \le \rho^{(n-1)/2}.
\]
This proves \eqref{eq:nu-r-frostman}.

Let $E_r\subset\bigcup_\ell B(\xi_\ell,\rho_\ell)$ be any admissible covering in the definition of $\Hc$. Since $\nu_r(E_r)=1$, \eqref{eq:nu-r-frostman} gives
\[
 1
 \le \sum_\ell \nu_r(B(\xi_\ell,\rho_\ell))
 \lesssim_n \sum_\ell \rho_\ell^{(n-1)/2}.
\]
Taking the infimum over all such coverings and combining with \eqref{eq:Er-H-upper}, we obtain
\begin{equation}\label{eq:Er-H-two-sided}
 \Hc(E_r)\asymp_n1.
\end{equation}
Moreover, if $N\subset\Sph$ satisfies $\sig(N)=0$, then $\nu_r(N)=0$, since $\nu_r\ll\sig$. Thus $\nu_r(E_r\setminus N)=1$, and the same covering argument gives
\begin{equation}\label{eq:Er-null-robust}
 \Hc(E_r\setminus N)\gtrsim_n1,
\end{equation}
with a constant independent of $r$ and $N$.

We now combine these sets at different scales. Choose $r_k\downarrow0$ so that
\[
 r_k^{n-1}=2^{-4k},
\]
and write $E_k:=E_{r_k}$. By \eqref{eq:Er-sigma},
\begin{equation}\label{eq:Ek-sigma}
 \sig(E_k)\lesssim_n2^{-4k},
\end{equation}
while \eqref{eq:Er-null-robust} implies that, for every $\sig$-null set $N$,
\begin{equation}\label{eq:Ek-null-robust}
 \Hc(E_k\setminus N)\gtrsim_n1,
\end{equation}
with a constant independent of $k$.

Define
\[
 w(\theta):=\sup_{k\ge1}2^k\1_{E_k}(\theta).
\]
Since $\sum_k\sig(E_k)<\infty$, the Borel--Cantelli lemma shows that $w<\infty$ for $\sig$-almost every $\theta$.

We first verify that $w\in L^{1,1/2}(\Sph)$. If $2^m\le t<2^{m+1}$, then
\[
 \{w>t\}\subset\bigcup_{k\ge m+1}E_k,
\]
and therefore \eqref{eq:Ek-sigma} gives
\[
 \sig(\{w>t\})
 \lesssim_n\sum_{k\ge m+1}2^{-4k}
 \lesssim_n2^{-4m}.
\]
Using the distribution-function quasi-norm from Lemma \ref{lem:lorentz-L1},
\begin{align*}
 \norm{w}_{L^{1,1/2}}^{1/2}
 &\lesssim_n
 1+
 \sum_{m=1}^\infty
 \int_{2^m}^{2^{m+1}}
 t^{-1/2}2^{-2m}\dd t\\
 &\lesssim_n
 1+\sum_{m=1}^\infty2^{-3m/2}
 <\infty.
\end{align*}
Thus $w\in L^{1,1/2}(\Sph)$.

It remains to show that $w\notin\Cspace(\Sph)$. Let $\omega=w$ $\sig$-almost everywhere and set
\[
 N:=\{\theta\in\Sph:\omega(\theta)\neq w(\theta)\}.
\]
Then $\sig(N)=0$. For every $k\ge1$,
\[
 E_k\setminus N\subset\{\abs\omega\ge2^k\}.
\]
Hence \eqref{eq:Ek-null-robust} gives
\[
 \Hc(\{\abs\omega>2^{k-1}\})\gtrsim_n1.
\]
Using \eqref{eq:continuous-layer},
\[
 I(\omega)
 \ge
 \frac12\int_0^{2^{k-1}}
 t^{-1/2}\Hc(\{\abs\omega>t\})\dd t
 \gtrsim_n2^{k/2}.
\]
Since $k$ is arbitrary, $I(\omega)=\infty$. This holds for every measurable representative $\omega=w$ $\sig$-almost everywhere. Therefore
\[
 \rho_{\Cspace}(w)=\infty,
\]
and hence $w\notin\Cspace(\Sph)$.
\end{proof}

\subsubsection{An example beyond the $L\log^+ L$ and $L^p$ conditions}

We now construct a nonnegative kernel that belongs to $\Cspace$, but belongs to neither  $L\log^+ L$ nor  any $L^p$ for $p>1$.

\begin{proposition}\label{prop:C-not-LlogL}
There exists a nonnegative kernel
\[
 \Omega_*\in\Cspace(\Sph)
 \setminus\bigcup_{p>1}L^p(\Sph),
 \qquad
 \Omega_*\notin L\log^+ L(\Sph).
\]
\end{proposition}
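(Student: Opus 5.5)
We will build $\Omega_*$ as a sum of spikes on geodesic balls, using Proposition \ref{prop:ball-majorant-converse} to certify membership in $\Cspace$. Fix a sequence of pairwise disjoint geodesic balls $B_k=B(\theta_k,r_k)\subset\Sph$, $k\ge k_0$, with $r_k\downarrow0$ rapidly; for instance choose centers along a fixed geodesic arc with $\sum_k r_k$ small. Disjointness only simplifies the integral computations. Set
\[
 \Omega_*:=\sum_{k\ge k_0} c_k\,u_{B_k}, \qquad c_k\ge0 ,
\]
so that $\Omega_*=c_k/\sig(B_k)$ on $B_k$. By Proposition \ref{prop:ball-majorant-converse}, $\Omega_*\in\Cspace(\Sph)$ as soon as $\sum_k\sqrt{c_k}<\infty$. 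We therefore take $c_k:=k^{-2}$, so $\sum_k\sqrt{c_k}=\sum_k k^{-1}$.

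This choice fails, since the series diverges, so we take $c_k:=k^{-3}$ instead, which gives $\sum\sqrt{c_k}=\sum k^{-3/2}<\infty$. The free parameter is then the size of the balls. Write $A_k:=c_k/\sig(B_k)$ for the height on $B_k$. We compute
\[
 \int\Omega_*\log(2+\Omega_*)\dd\sig=\sum_k c_k\log(2+A_k)\approx\sum_k k^{-3}\log A_k .
\]
Choosing $\sig(B_k)\asymp_n e^{-k^4}$, that is $r_k\asymp_n e^{-k^4/(n-1)}$ via Lemma \ref{lem:ahlfors}, gives $\log A_k\approx k^4$. The sum then behaves like $\sum_k k$, which diverges, so $\Omega_*\notin L\log^+L$.

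For $L^p$ with $p>1$ we compute
\[
 \int\Omega_*^p\dd\sig=\sum_k A_k^p\sig(B_k)=\sum_k c_k^p\sig(B_k)^{1-p}\approx\sum_k k^{-3p}e^{(p-1)k^4}=\infty
\]
for every $p>1$. By disjointness, $\Omega_*$ is finite a.e. and measurable. Its $L^1$ norm is $\sum c_k<\infty$, which is consistent with \eqref{eq:C-L1}.

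The only delicate point is the disjointness and admissibility of the balls, which requires $r_k\le1$ and pairwise disjointness. We can ensure this by placing $\theta_k$ at geodesic distance $2^{-k}$ from a fixed point along a great circle, with $r_k\le 2^{-k-3}$ for $k\ge k_0$; the superexponential decay of $r_k$ guarantees this for large $k_0$. No step is a serious obstacle. The only thing to check is that the Choquet-integral bound in Proposition \ref{prop:ball-majorant-converse} applies to the specific representative $\sum_k c_k u_{B_k}$, and it does directly, since that proposition requires only the majorant inequality and $\sum_k\sqrt{c_k}<\infty$.
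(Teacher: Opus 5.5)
Your proposal is correct and is essentially the paper's proof. Both use disjoint geodesic balls centred at distance $2^{-k}$ along a great-circle arc with $r_k\le 2^{-k-3}$, set $\Omega_*=\sum_k c_k u_{B_k}$, place it in $\Cspace$ via Proposition~\ref{prop:ball-majorant-converse}, and use the divergence of $\sum_k c_k\log(1/\sig(B_k))$ and of $\sum_k c_k^p\sig(B_k)^{1-p}$. The only difference is the choice of parameters: you take $c_k=k^{-3}$ and $\sig(B_k)\asymp e^{-k^4}$, whereas the paper takes $c_k=4^{-k}k^{-2}$ and $\sig(B_k)\asymp e^{-k4^k}$.

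In a clean write-up, drop the abandoned choice $c_k=k^{-2}$. Also, since the paper defines $L\log^+L$ through the Luxemburg norm, state the divergence of $\int\Phi(\Omega_*/a)\dd\sig$ for every $a>0$; your estimate gives this verbatim because $\log(2+A_k/a)\ge k^4-O_a(\log k)$.
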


\begin{proof}
Let
\[
 c_k:=\frac{4^{-k}}{k^2},
 \qquad
 r_k:=\varepsilon\exp\left(-\frac{k4^k}{n-1}\right),
 \qquad k\ge1.
\]
Set
\[
\gamma(s):=(\cos s,\sin s,0,\ldots,0),
\qquad 0\le s\le \frac12.
\]
Then, for every $s,t\in[0,1/2]$,
\[
d_{\Sph}(\gamma(s),\gamma(t))=|s-t|.
\]
Let $\theta_k:=\gamma(2^{-k})$. Choose $\varepsilon>0$ sufficiently small that $r_k\le2^{-k-3}$ for every $k$. Since $r_k$ is decreasing, if $k<\ell$, then
\[
 d_{\Sph}(\theta_k,\theta_\ell)
 =2^{-k}-2^{-\ell}
 \ge2^{-k-1}
 >r_k+r_\ell.
\]
Thus the geodesic balls $B_k:=B(\theta_k,r_k)$ are pairwise disjoint.

Define
\[
 b_k:=\frac{c_k}{\sig(B_k)},
 \qquad
 \Omega_*:=\sum_{k=1}^\infty b_k\1_{B_k}
 =\sum_{k=1}^\infty c_ku_{B_k}.
\]
Since
\[
 \sum_{k=1}^\infty\sqrt{c_k}
 =\sum_{k=1}^\infty\frac{2^{-k}}k<\infty,
\]
Proposition \ref{prop:ball-majorant-converse} gives
$\Omega_*\in\Cspace(\Sph)$.

By Lemma \ref{lem:ahlfors},
\(
 \sig(B_k)\asymp_n r_k^{n-1}
 =\varepsilon^{n-1} e^{-k4^k}.
\)
Therefore, for every fixed $a>0$ and all sufficiently large $k$,
\(
 \log\left(2+\frac{b_k}{a}\right)
 \gtrsim_a k4^k.
\)
Using the pairwise disjointness of the $B_k$,
\[
 \int_{\Sph}
 \Phi\left(\frac{\Omega_*}{a}\right)\dd\sig
 =
 \sum_k
 \frac{c_k}{a}
 \log\left(2+\frac{b_k}{a}\right)
 \gtrsim_a
 \sum_{k\gg1}\frac1k
 =\infty.
\]
This holds for every $a>0$, so the definition of the norm in \eqref{eq:LlogL-lux} implies that
\(
 \Omega_*\notin L\log^+ L(\Sph).
\)

Finally, fix $p>1$. Since the geodesic balls $B_k$ are pairwise disjoint,
\(
	\|\Omega_*\|_{L^p}^p=\sum_{k=1}^{\infty}c_k^p\sigma(B_k)^{1-p}.
\)
Moreover,
\[
c_k^p\sigma(B_k)^{1-p}
\asymp_{n,p,\varepsilon}
4^{-pk}k^{-2p}e^{(p-1)k4^k}.
\]
Since $p>1$, the exponential factor $e^{(p-1)k4^k}$ dominates $4^{pk}k^{2p}$ for large $k$, and consequently
\[
c_k^p\sigma(B_k)^{1-p}\longrightarrow+\infty.
\]
Thus the series diverges, so
\[
\Omega_*\notin L^p(\Sph).
\]
\end{proof}

\begin{proof}[Proof of Theorem \ref{thm:comparison-main}]
Proposition \ref{prop:lorentz-embedding} yields the continuous embeddings, Proposition \ref{prop:C-strict-Lorentz} shows that the inclusion $\Cspace(\Sph)\subset L^{1,1/2}(\Sph)$ is strict, and Proposition \ref{prop:C-not-LlogL} provides the required nonnegative kernel $\Omega_*$.
\end{proof}

\section{Weak-Type Estimates for Rough Maximal Operators and the Sharp Scale}\label{sec:weak-main}

\subsection{A Uniform Weak Type Estimate for a Single Geodesic-Ball Kernel}\label{subsec:single-geodesic-ball}

For a geodesic ball $B\subset\Sph$ and $R>0$, set
\[
 \Gamma(B,R):=\{\rho\theta:0<\rho<R,\ \theta\in B\}.
\]

\begin{lemma}\label{lem:cone-box}
There exist constants $r_0\in(0,1)$ and $C_n>1$, depending only on $n$, such that if
$B=B(e,r)$ and $0<r\le r_0$, then for every $R>0$, with respect to the orthogonal decomposition
\(
 \R^n=\R e\oplus e^\perp
\)
one has
\(
 \Gamma(B,R)\subset Q(B,R),
\)
where
\[
 Q(B,R):=\{se+z:0<s<C_nR,\ z\in e^\perp,\ \abs z<C_nrR\},
\]
and
\[
 \abs{Q(B,R)}\asymp_nr^{n-1}R^n\asymp_n\sig(B)R^n.
\]
\end{lemma}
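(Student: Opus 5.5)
Since the claim is an elementary geometric inclusion, the plan is to parametrize a point of the cone directly. For $\theta\in B=B(e,r)$ write $\theta=\langle\theta,e\rangle e+\theta'$ with $\theta'\in e^\perp$. If $t:=d_{\Sph}(\theta,e)<r$, then $\langle\theta,e\rangle=\cos t$ and $\abs{\theta'}=\sin t$. For a point $x=\rho\theta$ of $\Gamma(B,R)$ with $0<\rho<R$, the components are therefore $s=\rho\cos t$ and $z=\rho\theta'$. I will choose $r_0\le1$, for instance $r_0=1$, so that $t<1<\pi/2$. This gives $\cos t\ge\cos 1>0$, hence $s>0$; moreover $s<R\le C_nR$. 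Using $\sin t\le t<r$ we also get $\abs z<rR\le C_nrR$. So $\Gamma(B,R)\subset Q(B,R)$ already holds with $C_n=2$, say (strict inequality forces $C_n>1$). Any fixed small $r_0$ works equally well.

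Next comes the volume. $Q(B,R)$ is a cylinder $(0,C_nR)\times\{z\in e^\perp:\abs z<C_nrR\}$. Hence
\[
\abs{Q(B,R)}=C_nR\cdot\omega_{n-1}(C_nrR)^{n-1}=C_n^n\omega_{n-1}\,r^{n-1}R^n,
\]
where $\omega_{n-1}$ is the volume of the unit ball in $\R^{n-1}$. This gives $\abs{Q(B,R)}\asymp_n r^{n-1}R^n$ as an identity with constants depending only on $n$. Since $r\le r_0\le1$, Lemma~\ref{lem:ahlfors} gives $\sig(B)\asymp_n r^{n-1}$, and the second equivalence $r^{n-1}R^n\asymp_n\sig(B)R^n$ follows.

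There is no real obstacle. The only points needing care are to keep $r_0<\pi/2$ so that $\cos t$ stays bounded below (ensuring $s>0$), and to note that the constants are independent of $e$ by rotation invariance. If one later wants the matching lower bound $\abs{\Gamma(B,R)}\gtrsim\abs{Q(B,R)}$, polar coordinates give $\abs{\Gamma(B,R)}=\abs{\Sph}_{\text{unnormalized}}\,\sig(B)R^n/n$, which is comparable. That bound is not required by the statement, but it explains why $Q(B,R)$ is an efficient box for applying Lemma~\ref{lem:fixed-shape}-type averaging later.
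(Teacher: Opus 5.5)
Your proof is correct and follows essentially the same argument as the paper. You split $\rho\theta$ into the axial part $\rho\langle\theta,e\rangle$ and a transverse part of length $\rho\sin d_{\Sph}(\theta,e)<rR$, keep $r_0$ below $\pi/2$ so the axial part is positive, compute the cylinder's volume, and then invoke Lemma~\ref{lem:ahlfors}. One small point: the statement requires $r_0\in(0,1)$, so take for instance $r_0=1/2$ rather than $r_0=1$, which your own remark that any small $r_0$ works already permits.
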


\begin{proof}
Take any $y=\rho\theta\in\Gamma(B,R)$. Let
\[
 s:=\rho\langle\theta,e\rangle,
 \qquad
 z:=\rho\bigl(\theta-\langle\theta,e\rangle e\bigr).
\]
Then $z\perp e$ and $y=se+z$. Since $d_{\Sph}(\theta,e)<r\le r_0$, choosing $r_0<\pi/3$ ensures that $\langle\theta,e\rangle>0$, and hence
\[
 0<s=\rho\langle\theta,e\rangle\le\rho<R.
\]
Moreover, if $\gamma=d_{\Sph}(\theta,e)$, then
\[
 \abs z=\rho\sin\gamma\le R\sin r\le Rr.
\]
Thus $y\in Q(B,R)$, proving the inclusion.

The length of $Q(B,R)$ in the $e$ direction is comparable to $R$, while its cross-section in the $n-1$ transverse directions is an $(n-1)$-dimensional ball of radius comparable to $rR$. Hence
\[
 \abs{Q(B,R)}\asymp_nR(rR)^{n-1}=r^{n-1}R^n.
\]
The final comparison follows from \eqref{eq:ball-measure}, since $r^{n-1}\asymp_n\sig(B)$.
\end{proof}

The geometry of Lemma \ref{lem:cone-box} is illustrated in Figure \ref{fig:cone-box}, which shows a two-dimensional cross-section containing the axial direction $e$. For
\(
y=\rho\theta\in\Gamma(B,R),
\)
the orthogonal decomposition
\(
y=se+z
\)
gives the axial and transverse components, respectively. The condition
\(
d_{\Sph}(\theta,e)<r
\)
ensures that
\(
0<s<R
\)
and
\(
\abs z\le Rr,
\)
and therefore the cone $\Gamma(B,R)$ is contained in the cylinder $Q(B,R)$.

\begin{figure}[t]
\centering
\includegraphics[width=0.84\textwidth]
{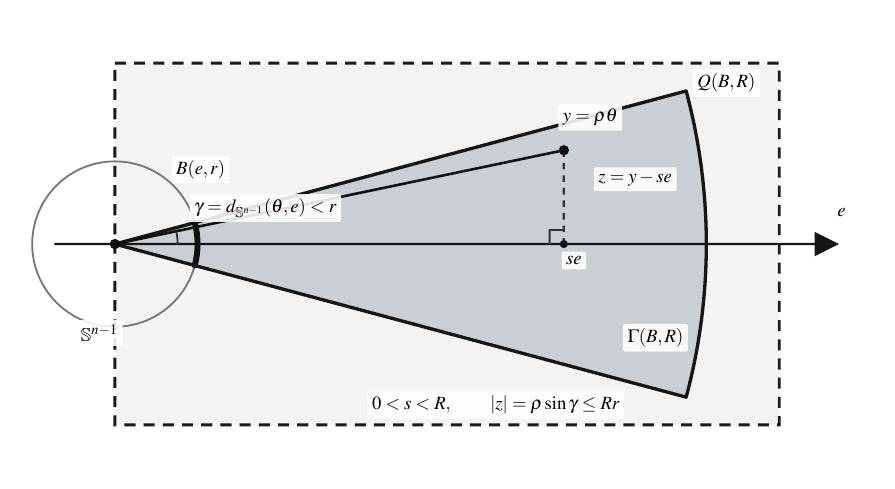}
\caption{The cone $\Gamma(B,R)$ is controlled by the cylinder $Q(B,R)$.}
\label{fig:cone-box}
\end{figure}

Before establishing the single-geodesic-ball estimate, it is important to distinguish it from the Kakeya maximal function. Although the argument below also uses averages over elongated regions, the geodesic ball $B=B(e,r)$ remains fixed throughout; both its central direction $e$ and geodesic radius $r$ are fixed, and only the radial scale $R>0$ varies. By contrast, the Kakeya maximal function takes a supremum over all possible directions of the elongated regions; see \cite[Chapter~X]{Stein1993}. Thus, the single-geodesic-ball estimate below does not imply a weak-type $(1,1)$ estimate for the Kakeya maximal function.

\begin{theorem}[Single-geodesic-ball estimate]\label{thm:single-geodesic-ball}
There exists a constant $C_n$ such that for every geodesic ball $B\subset\Sph$, every
$f\in L^1(\R^n)$, and every $\lambda>0$,
\[
 \abs{\{x:\mathcal M_{u_B}f(x)>\lambda\}}
 \le\frac{C_n}{\lambda}\norm f_1.
\]
The constant is independent of the location and radius of $B$.
\end{theorem}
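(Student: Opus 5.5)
The plan is to dominate $\mathcal M_{u_B}f$ pointwise by a dimensional constant times an average over a dilated fixed shape, and then to invoke Lemma~\ref{lem:fixed-shape}. The difficulty is that Lemma~\ref{lem:fixed-shape} gives a constant that depends on the shape $Q_0$. So we will first normalize the shape by a linear change of variables, and only then apply the Hardy--Littlewood theorem (Lemma~\ref{lem:HL}).

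\emph{Step 1 (small balls).} Assume $B=B(e,r)$ with $0<r\le r_0$, where $r_0$ is from Lemma~\ref{lem:cone-box}. Passing to polar coordinates,
\[
 R^{-n}\int_{|y|<R}|f(x-y)|\,u_B(y/|y|)\dd y
 =\frac{1}{\sig(B)R^n}\int_{\Gamma(B,R)}|f(x-y)|\dd y
 \le \frac{1}{\sig(B)R^n}\int_{Q(B,R)}|f(x-y)|\dd y .
\]
By Lemma~\ref{lem:cone-box}, $|Q(B,R)|\asymp_n\sig(B)R^n$, so this is at most $C_n$ times the average of $|f(x-\cdot)|$ over the cylinder $Q(B,R)$. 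Up to a rotation, $Q(B,R)=R\,Q_{e,r}$ with
\[
 Q_{e,r}=\{se+z:0<s<C_n,\ |z|<C_nr\}.
\]
Let $T_r$ be the linear map that fixes $e$ and scales $e^\perp$ by $r$. Then $Q_{e,r}=T_r(Q_0)$, where $Q_0$ is the cylinder $\{se+z:0<s<C_n,\ |z|<C_n\}$, which no longer depends on $r$. Setting $g:=|f|\circ T_r$, the average of $|f(x-\cdot)|$ over $R\,T_rQ_0$ equals the average of $g(T_r^{-1}x-\cdot)$ over $RQ_0$. Hence
\[
 \mathcal M_{u_B}f(x)\lesssim_n \mathcal N_{Q_0}g(T_r^{-1}x)\lesssim_n \mathcal M g(T_r^{-1}x).
\]
Applying Lemma~\ref{lem:HL} to $g$ and changing variables back, the Jacobian factors $\det T_r^{\pm1}$ cancel. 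Indeed, $\|g\|_1=r^{-(n-1)}\|f\|_1$, while the level set of $\mathcal M_{u_B}f$ is contained in $T_r$ applied to the level set of $\mathcal M g$, whose measure is multiplied by $r^{n-1}$. This gives the bound $C_n\|f\|_1/\lambda$ uniformly in $r$ and $e$. Rotation invariance of Lebesgue measure handles the direction $e$, since the rotation is absorbed by composing $f$ with it.

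\emph{Step 2 (large balls).} Assume $r_0<r\le 1$. Then $\sig(B)\gtrsim_n r_0^{n-1}\gtrsim_n 1$ by Lemma~\ref{lem:ahlfors}, so $u_B\lesssim_n 1$. Consequently
\[
 \mathcal M_{u_B}f\lesssim_n \sup_R R^{-n}\int_{|y|<R}|f(x-y)|\dd y\lesssim_n \mathcal Mf,
\]
and Lemma~\ref{lem:HL} concludes. (If geodesic balls with larger radii were allowed, the same argument applies, since $\sig(B)$ would still be bounded below.)

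\emph{Main obstacle.} The delicate point is the uniformity in Step 1. Lemma~\ref{lem:fixed-shape} cannot be applied directly to $Q(B,1)$, because the shape degenerates as $r\to0$ and its constant would blow up. The anisotropic rescaling $T_r$ is exactly what removes this dependence. It works because $T_r$ commutes with the isotropic dilations $y\mapsto Ry$, so the supremum over $R$ is preserved under the change of variables, and the weak-$L^1$ bound is invariant under linear maps, since both sides pick up the same determinant factor. I would check this Jacobian bookkeeping carefully.
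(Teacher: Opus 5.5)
Your proposal is correct and follows the paper's proof essentially step for step: the cone-in-cylinder containment of Lemma~\ref{lem:cone-box}, the anisotropic rescaling (your $T_r$ is exactly the inverse of the paper's $L_B$) with cancelling Jacobians, and the direct comparison with $\mathcal M f$ when $r>r_0$. The only cosmetic difference is that you bound $\mathcal N_{Q_0}g$ by $\mathcal M g$ explicitly instead of citing Lemma~\ref{lem:fixed-shape}, which makes it transparent that the constant does not depend on the direction $e$.
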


\begin{proof}
Let $B=B(e,r)$. Then
\begin{align}
 \mathcal M_{u_B}f(x)
 &=c_n\sup_{R>0}\frac1{R^n\sig(B)}
 \int_0^R\int_B
 \abs{f(x-\rho\theta)}\rho^{n-1}\dd\sig(\theta)\dd\rho\notag\\
 &=c_n\sup_{R>0}\frac1{R^n\sig(B)}
 \int_{\Gamma(B,R)}\abs{f(x-y)}\dd y.
 \label{eq:ball-polar}
\end{align}

First suppose that $0<r\le r_0$, where $r_0$ is as in Lemma \ref{lem:cone-box}. By that lemma,
\[
 \Gamma(B,R)\subset Q(B,R),
 \qquad
 R^n\sig(B)\asymp_n\abs{Q(B,R)}.
\]
Hence
\begin{equation}\label{eq:ball-to-box-max}
 \mathcal M_{u_B}f(x)
 \lesssim_n
 \sup_{R>0}\frac1{\abs{Q(B,R)}}
 \int_{Q(B,R)}\abs{f(x-y)}\dd y.
\end{equation}

Define the invertible linear map
\[
 L_B(se+z):=se+r^{-1}z,
 \qquad s\in\R,\ z\in e^\perp.
\]
It leaves the $e$ direction unchanged and dilates each of the $n-1$ directions in $e^\perp$ by the factor $r^{-1}$. Therefore,
\[
 J_B:=\abs{\det L_B}=r^{-(n-1)},
 \qquad
 \abs{\det L_B^{-1}}=r^{n-1}.
\]
Moreover,
\[
 L_BQ(B,R)=RQ_0,
\]
where
\[
Q_0
:=
\bigl\{
se+w:
0<s<C_n,\;
w\in e^\perp,\;
|w|<C_n
\bigr\},
\]
and $RQ_0$ denotes the dilation of $Q_0$ by the factor $R$ about the origin; namely,
\[
\begin{aligned}
	RQ_0
	&:=
	\{Ry:y\in Q_0\}\\
	&=
	\bigl\{
	se+w:
	0<s<C_nR,\;
	w\in e^\perp,\;
	|w|<C_nR
	\bigr\}.
\end{aligned}
\]

This change of variables is illustrated in Figure \ref{fig:anisotropic-normalization}.
\begin{figure}[H]
\centering
\includegraphics[width=0.88\textwidth]
{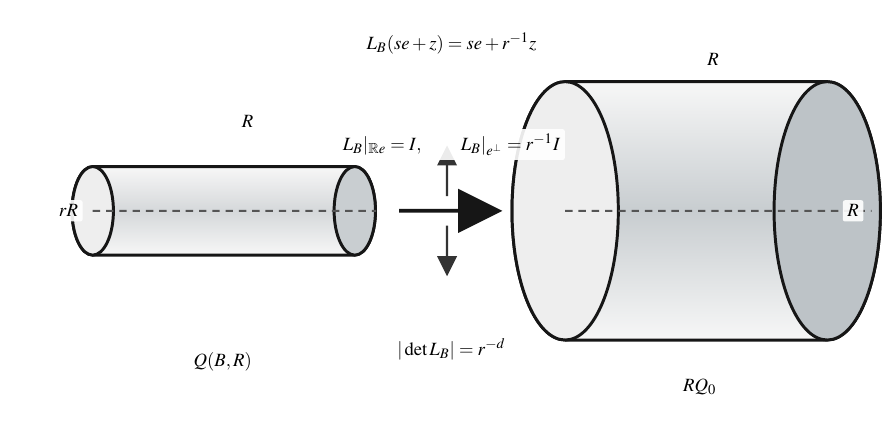}
\caption{The map $L_B$ leaves the axial scale unchanged and expands the transverse scale from $rR$ to $R$, thereby mapping $Q(B,R)$ onto the dilation $RQ_0$ of a fixed shape.}
\label{fig:anisotropic-normalization}
\end{figure}

Let
\[
 F(w):=f(L_B^{-1}w),
 \qquad x'=L_Bx.
\]
In the integral in \eqref{eq:ball-to-box-max}, make the change of variables $w=L_By$. Since $\dd y=J_B^{-1}\dd w$ and $\abs{Q(B,R)}=J_B^{-1}\abs{RQ_0}$, we obtain
\begin{align*}
 &\frac1{\abs{Q(B,R)}}
 \int_{Q(B,R)}\abs{f(x-y)}\dd y\\
 &\qquad=
 \frac1{\abs{RQ_0}}
 \int_{RQ_0}
 \abs{f(L_B^{-1}x'-L_B^{-1}w)}\dd w\\
 &\qquad=
 \frac1{\abs{RQ_0}}
 \int_{RQ_0}\abs{F(x'-w)}\dd w.
\end{align*}
Therefore,
\[
 \mathcal M_{u_B}f(L_B^{-1}x')
 \lesssim_n\mathcal N_{Q_0}F(x').
\]
Consequently by Lemma \ref{lem:fixed-shape},
\begin{align*}
 \abs{\{x:\mathcal M_{u_B}f(x)>\lambda\}}
 &\le J_B^{-1}
 \abs{\{x':C_n\mathcal N_{Q_0}F(x')>\lambda\}}\\
 &\lesssim_nJ_B^{-1}\frac{\norm F_1}{\lambda}.
\end{align*}
On the other hand, the change of variables $w=L_Bx$ gives
\[
 \norm F_1
 =\int_{\R^n}\abs{f(L_B^{-1}w)}\dd w
 =J_B\norm f_1.
\]
Thus, the two Jacobian factors cancel out exactly, yielding
\[
 \abs{\{x:\mathcal M_{u_B}f(x)>\lambda\}}
 \lesssim_n\frac{\norm f_1}{\lambda}.
\]

Finally, suppose that $r>r_0$. By Lemma \ref{lem:ahlfors},
$\sig(B)\ge\sig(B(e,r_0))\gtrsim_n1$, and
$\Gamma(B,R)\subset B_{\R^n}(0,R)$. By \eqref{eq:ball-polar},
\[
 \mathcal M_{u_B}f(x)
 \lesssim_n
 \sup_{R>0}\frac1{R^n}
 \int_{\abs y<R}\abs{f(x-y)}\dd y
 \lesssim_n\mathcal Mf(x).
\]
Applying Lemma \ref{lem:HL} once again, we obtain the desired result.
\end{proof}

\subsection{Weak-Type Estimates for General Hausdorff--Choquet Kernels}\label{subsec:C-weak}

\begin{proof}[Proof of Theorem \ref{thm:main}]
Fix $\varepsilon>0$. By Proposition \ref{prop:ball-majorant}, there exist geodesic balls $B_j\subset\Sph$ and coefficients $c_j\ge0$ such that
\[
 \abs{\Omega}\le\sum_{j=1}^\infty c_ju_{B_j}
 \quad\text{a.e.},
 \qquad
 \sum_{j=1}^\infty\sqrt{c_j}
 \lesssim_n\rho_{\Cspace}(\Omega)+\varepsilon.
\]
Then we get
\[
 \mathcal M_{\Omega}f
 \le\sum_{j=1}^\infty c_j\mathcal M_{u_{B_j}}f.
\]

For any $N\in\mathbb N$, set
\[
 F_N:=\sum_{j=1}^Nc_j\mathcal M_{u_{B_j}}f,
 \qquad
 S_N:=\sum_{j=1}^N\sqrt{c_j}.
\]
If $S_N=0$, the conclusion is immediate. Otherwise, discard all terms with $c_j=0$ and sum only over indices for which $c_j>0$. Let
\[
 \lambda_j:=\lambda\frac{\sqrt{c_j}}{S_N}.
\]
It then follows that the sum of these $\lambda_j$ is $\lambda$, and
\[
 \{F_N>\lambda\}
 \subset
 \bigcup_{\substack{1\le j\le N\\ c_j>0}}
 \left\{
 \mathcal M_{u_{B_j}}f>\frac{\lambda_j}{c_j}
 \right\}.
\]
By Theorem \ref{thm:single-geodesic-ball},
\begin{align*}
 \abs{\{F_N>\lambda\}}
 &\lesssim_n
 \norm f_1\sum_{\substack{1\le j\le N\\ c_j>0}}\frac{c_j}{\lambda_j}\\
 &=\frac{S_N^2}{\lambda}\norm f_1\\
 &\lesssim_n
 \frac{(\rho_{\Cspace}(\Omega)+\varepsilon)^2}{\lambda}
 \norm f_1.
\end{align*}
Letting $N\to\infty$ and using monotone convergence yields the same estimate; since $\mathcal M_{\Omega}f\le\lim_NF_N$, we obtain
\[
 \abs{\{x:\mathcal M_{\Omega}f(x)>\lambda\}}
 \lesssim_n
 \frac{(\rho_{\Cspace}(\Omega)+\varepsilon)^2}{\lambda}
 \norm f_1.
\]
Finally, letting $\varepsilon\downarrow0$ gives
\[
 \abs{\{x:\mathcal M_{\Omega}f(x)>\lambda\}}
 \lesssim_n
 \frac{\norm{\Omega}_{\Cspace}}{\lambda}\norm f_1.
\]
\end{proof}

\subsection{The Sum Space with $L\log^+\! L$}\label{subsec:sum-space}

By Lemma \ref{lem:classical-llogl} and Theorem \ref{thm:main}, both the $L\log^+\! L$ and $\Cspace$ components enjoy homogeneous weak-type $(1,1)$ bounds. We now prove Theorem \ref{thm:bridge} directly. The sum space $\Xspace$ and the control quantity $\mathcal N_{\Xspace}$ are as defined in \eqref{eq:X-control-main}.

\begin{proof}[Proof of Theorem \ref{thm:bridge}]
Take any $\Omega\in\Xspace(\Sph)$ and any admissible decomposition
\[
 \Omega=\Omega_1+\Omega_2,
 \qquad
 \Omega_1\in L\log^+\! L(\Sph),
 \qquad
 \Omega_2\in\Cspace(\Sph).
\]
Since
\[
 \abs\Omega\le\abs{\Omega_1}+\abs{\Omega_2},
\]
the elementary properties of the maximal operator give
\[
 \mathcal M_{\Omega}f\le\mathcal M_{\Omega_1}f+\mathcal M_{\Omega_2}f.
\]
Therefore, for every $\lambda>0$,
\[
 \{x:\mathcal M_{\Omega}f(x)>\lambda\}
 \subset
 \left\{x:\mathcal M_{\Omega_1}f(x)>\frac\lambda2\right\}
 \cup
 \left\{x:\mathcal M_{\Omega_2}f(x)>\frac\lambda2\right\}.
\]
By Lemma \ref{lem:classical-llogl} and Theorem \ref{thm:main},
\begin{align*}
 \abs{\{x:\mathcal M_{\Omega}f(x)>\lambda\}}
 &\le
 \abs{\left\{x:\mathcal M_{\Omega_1}f(x)>\frac\lambda2\right\}}
 +
 \abs{\left\{x:\mathcal M_{\Omega_2}f(x)>\frac\lambda2\right\}}\\
 &\lesssim_n
 \frac{
 \norm{\Omega_1}_{L\log^+\! L}+\norm{\Omega_2}_{\Cspace}
 }{\lambda}\norm f_1.
\end{align*}
Taking the infimum over all admissible decompositions $\Omega=\Omega_1+\Omega_2$, we obtain
\[
 \abs{\{x:\mathcal M_{\Omega}f(x)>\lambda\}}
 \lesssim_n
 \frac{\mathcal N_{\Xspace}(\Omega)}{\lambda}\norm f_1,
\]
which is \eqref{eq:X-weak-main}.

Finally, Proposition \ref{prop:C-not-LlogL} constructs
\[
 \Omega_*\in\Cspace(\Sph)\setminus L\log^+\! L(\Sph).
\]
Since $\Cspace(\Sph)\subset\Xspace(\Sph)$, we have $\Omega_*\in\Xspace(\Sph)$, while $\Omega_*\notin L\log^+\! L(\Sph)$. Hence
\[
 L\log^+\! L(\Sph)\subsetneq\Xspace(\Sph).
\]
\end{proof}

\subsection{The Sharp Dimension in the Hausdorff--Choquet Scale}\label{subsec:critical-alpha}

In this subsection we return to the general parameter $0<\alpha\le n-1$ introduced in  Definition \ref{def:hausdorff-content} and use the Hausdorff--Choquet scale $\Cspace_\alpha$ from Definition \ref{def:C-alpha}.

\begin{proposition}[Basic estimates in general dimension]\label{prop:general-alpha-ball}
Let $0<\alpha\le n-1$. Then
\begin{equation}\label{eq:general-content-measure}
 \mathcal H^\alpha_1(E)
 \gtrsim_{n,\alpha}\sig(E)^{\alpha/(n-1)}
\end{equation}
holds for every measurable set $E\subset\Sph$, and
\begin{equation}\label{eq:general-alpha-ball}
 \mathcal H^\alpha_1(B(\theta,r))
 \asymp_{n,\alpha}r^\alpha.
\end{equation}
Consequently, for the normalized geodesic-ball kernel
\[
 u_{B(\theta,r)}
 =\frac{\1_{B(\theta,r)}}{\sig(B(\theta,r))},
\]
one has
\begin{equation}\label{eq:C-alpha-normalized-ball}
 \norm{u_{B(\theta,r)}}_{\Cspace_\alpha}
 \asymp_{n,\alpha}r^{2\alpha-(n-1)}.
\end{equation}
\end{proposition}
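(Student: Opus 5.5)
The plan is to repeat the argument of Proposition~\ref{prop:content-estimates} with the exponent $(n-1)/2$ replaced by a general $\alpha\in(0,n-1]$. The only new ingredient is the elementary inequality $\sum_j a_j^{p}\le(\sum_j a_j)^{p}$ for $p=(n-1)/\alpha\ge1$ and $a_j\ge0$.

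\emph{Lower bound for general sets.} Take any admissible covering $E\subset\bigcup_jB(\theta_j,r_j)$ with $0<r_j\le1$. Subadditivity of $\sig$ and Lemma~\ref{lem:ahlfors} give
\[
\sig(E)\le\sum_j\sig(B(\theta_j,r_j))\lesssim_n\sum_jr_j^{n-1}=\sum_j\bigl(r_j^\alpha\bigr)^{(n-1)/\alpha}\le\Bigl(\sum_jr_j^\alpha\Bigr)^{(n-1)/\alpha}.
\]
Here we used $(n-1)/\alpha\ge1$. Taking $\alpha/(n-1)$ powers and then the infimum over coverings yields \eqref{eq:general-content-measure}.

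\emph{Content of balls.} Using the ball itself as a covering gives $\mathcal H_1^\alpha(B(\theta,r))\le r^\alpha$. For the reverse inequality, combine \eqref{eq:general-content-measure} with Lemma~\ref{lem:ahlfors}:
\[
\mathcal H_1^\alpha(B(\theta,r))\gtrsim_{n,\alpha}\sig(B(\theta,r))^{\alpha/(n-1)}\gtrsim_n r^\alpha.
\]
This proves \eqref{eq:general-alpha-ball}.

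\emph{Normalized kernels.} For the upper bound in \eqref{eq:C-alpha-normalized-ball}, take the representative $\omega=u_B$ with $B=B(\theta,r)$. Since $u_B^{1/2}=\sig(B)^{-1/2}\1_B$, the Choquet integral is
\[
\int_{\Sph}u_B^{1/2}\dd\mathcal H_1^\alpha=\sig(B)^{-1/2}\mathcal H_1^\alpha(B)\asymp_{n,\alpha}r^{-(n-1)/2}r^\alpha.
\]
Squaring gives $\norm{u_B}_{\Cspace_\alpha}\lesssim r^{2\alpha-(n-1)}$.

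For the lower bound we must control every $\sig$-a.e.\ representative $\omega$. The difference between $\omega$ and $u_B$ lives on a null set $N$, so $\{\abs\omega^{1/2}>t\}\supset B\setminus N$ for $0<t<\sig(B)^{-1/2}$. By \eqref{eq:general-content-measure},
\[
\mathcal H_1^\alpha(B\setminus N)\gtrsim\sig(B\setminus N)^{\alpha/(n-1)}=\sig(B)^{\alpha/(n-1)}\asymp r^\alpha.
\]
Integrating in $t$ over $(0,\sig(B)^{-1/2})$ gives $I(\omega)\gtrsim r^{\alpha-(n-1)/2}$ for every representative. Squaring yields the two-sided estimate.

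The only delicate point is this null-set robustness in the last step. The measure-based lower bound \eqref{eq:general-content-measure} handles it automatically, since it does not see $\sig$-null modifications. Everything else is routine, and the constants depend on $\alpha$ only through the exponent $\alpha/(n-1)$ applied to the Ahlfors constants.
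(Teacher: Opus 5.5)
Your proposal is correct and follows the paper's proof essentially step by step. It uses the same covering argument with $\sum_j r_j^{n-1}\le(\sum_j r_j^\alpha)^{(n-1)/\alpha}$, the ball itself as a cover for the upper bounds, and the measure-based lower bound on $\mathcal H^\alpha_1$ to handle every $\sigma$-a.e.\ representative. The only difference is cosmetic: you bound the content of $B\setminus N$, while the paper bounds the superlevel set $\{\abs\omega>1/(2\sigma(B))\}$, which has the same $\sigma$-measure as $B$.
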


\begin{proof}
If $E\subset\bigcup_jB(\theta_j,r_j)$, then
\[
 \sig(E)\lesssim_n\sum_jr_j^{n-1}
 =\sum_j(r_j^\alpha)^{(n-1)/\alpha}
 \le\left(\sum_jr_j^\alpha\right)^{(n-1)/\alpha}.
\]
Taking the infimum gives \eqref{eq:general-content-measure}. Applying this with $E=B(\theta,r)$ yields $\mathcal H^\alpha_1(B(\theta,r))\gtrsim r^\alpha$, while the geodesic ball itself provides the reverse upper bound.

Taking the pointwise-defined function $u_{B(\theta,r)}$ in the definition, we obtain
\[
 \rho_{\Cspace_\alpha}(u_{B(\theta,r)})
 \lesssim
 \sig(B(\theta,r))^{-1/2}r^\alpha
 \asymp r^{\alpha-(n-1)/2}.
\]
Conversely, if $\omega=u_{B(\theta,r)}$ almost everywhere, then the set
\[
 E_\omega
 :=
 \left\{\abs\omega>
 \frac1{2\sig(B(\theta,r))}\right\}
\]
has the same $\sigma$-measure as $B(\theta,r)$. By \eqref{eq:general-content-measure},
\[
 \mathcal H^\alpha_1(E_\omega)\gtrsim r^\alpha.
\]
Hence
\[
 \int_{\Sph}\abs\omega^{1/2}\dd\mathcal H^\alpha_1
 \ge
 \frac{\mathcal H^\alpha_1(E_\omega)}
 {\sqrt{2\sig(B(\theta,r))}}
 \gtrsim r^{\alpha-(n-1)/2}.
\]
Taking the infimum over all measurable functions $\omega=u_{B(\theta,r)}$ $\sigma$-almost everywhere and then squaring gives \eqref{eq:C-alpha-normalized-ball}.
\end{proof}

\begin{theorem}\label{thm4.4}
	Let $0<\alpha\le n-1$.
	\begin{enumerate}[label=\textup{(\roman*)}]
		\item If $0<\alpha\le (n-1)/2$, then there exists a constant $C_n$, depending only on $n$, such that
		\[
		\abs{\{x:\mathcal M_{\Omega}f(x)>\lambda\}}
		\le
		\frac{C_n}{\lambda}
		\norm{\Omega}_{\Cspace_\alpha}\norm f_1.
		\]
		\item If $(n-1)/2<\alpha\le n-1$, then there is no finite constant $C_{n,\alpha}$ for which the estimate above holds simultaneously for all nonnegative measurable kernels, all $f\in L^1(\R^n)$, and all $\lambda>0$.
	\end{enumerate}
\end{theorem}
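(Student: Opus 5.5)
For part (i), I will reduce to the critical case $\alpha=(n-1)/2$ by comparing contents. For $0<\alpha\le(n-1)/2$ and radii $0<r_j\le1$ we have $r_j^{(n-1)/2}\le r_j^{\alpha}$. Hence every admissible covering gives $\Hc(E)\le\mathcal H^\alpha_1(E)$ for all $E\subset\Sph$. By the level-set definition of the Choquet integral, this yields $\int|\omega|^{1/2}\dd\Hc\le\int|\omega|^{1/2}\dd\mathcal H^\alpha_1$ for every representative $\omega$. Taking the infimum over representatives gives $\|\Omega\|_{\Cspace}\le\|\Omega\|_{\Cspace_\alpha}$. Theorem \ref{thm:main} then gives the weak type bound with the constant $C_n$ from that theorem, uniformly in $\alpha$. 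The constant depends only on $n$, as required, because it is just the constant of Theorem \ref{thm:main}.

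For part (ii), I will test the inequality on the normalized ball kernels $\Omega=u_{B(e,r)}$ with $r\to0$. By \eqref{eq:C-alpha-normalized-ball}, $\|u_{B(e,r)}\|_{\Cspace_\alpha}\asymp r^{2\alpha-(n-1)}$, and this tends to $0$ as $r\to0$ because $2\alpha>n-1$. It therefore suffices to find $f$ and $\lambda$ with $\lambda|\{\mathcal M_{u_{B(e,r)}}f>\lambda\}|\gtrsim_n\|f\|_1$ uniformly in $r$, which is a lower bound for the single-ball operator. I take $f=\1_{B_{\R^n}(0,\delta)}$ with $\delta$ small, or an approximate identity, so that $\|f\|_1\asymp\delta^n$.

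The key computation is a lower bound on the cone $\{x=\rho\theta:\theta\in B(e,r/2),\ \rho\ge 1\}$, say. For such $x$, take $R\approx 2|x|$. The region $\Gamma(B,R)$ contains the ball $B_{\R^n}(x,\delta)$ as long as $\delta\lesssim r|x|$. By \eqref{eq:ball-polar} this gives
\[
\mathcal M_{u_B}f(x)\gtrsim \frac{\delta^n}{|x|^n\sig(B)}\asymp\frac{\delta^n}{|x|^nr^{n-1}}.
\]
Set $\lambda:=c\,\delta^n r^{-(n-1)}T^{-n}$. Then every point of the truncated cone $\{\rho\theta:\theta\in B(e,r/2),\ 1\le\rho\le T\}$ lies in the level set, and this cone has measure $\asymp T^n r^{n-1}$. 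Consequently $\lambda|\{\mathcal M_{u_B}f>\lambda\}|\gtrsim\delta^n\asymp\|f\|_1$. The requirement $\delta\lesssim r$ is met by choosing $\delta=r/10$, or by letting $\delta\to0$ first. The assumed estimate would then force $\|f\|_1\lesssim C\,r^{2\alpha-(n-1)}\|f\|_1$, which is a contradiction as $r\to0$.

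The main obstacle is checking the containment $B_{\R^n}(x,\delta)\subset\Gamma(B(e,r),R)$, that is, $x-y\in\Gamma$ with $|f(x-y)|$ supported near $0$. Equivalently, the set of $y$ with $|x-y|<\delta$ must have directions within $r$ of $e$ and radii below $R$. This holds when $x$ has direction within $r/2$ of $e$ and $\delta\le r|x|/4$, by an elementary angular estimate. Everything else is routine scaling.
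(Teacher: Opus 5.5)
Your part (i) is the paper's argument. Part (ii) is correct and has the same skeleton as the paper: the normalized ball kernels $u_r$, the upper bound $\norm{u_r}_{\Cspace_\alpha}\lesssim r^{2\alpha-(n-1)}$ obtained from the representative $u_r$ itself, and $r\downarrow0$. Where you differ is in how you show that $\mathcal M_{u_r}$ stays non-degenerate, and your way is more laborious.

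The paper takes $f=\1_{B_{\R^n}(0,2)}$ and radius $1$ in \eqref{eq:rough-maximal}. For $\abs x<1$ one has $f(x-y)=1$ whenever $\abs y<1$, so $\mathcal M_{u_r}f(x)\ge c_n\int_{\Sph}u_r\dd\sig=c_n$. Hence $B_{\R^n}(0,1)\subset\{\mathcal M_{u_r}f>c_n/2\}$ for every $r$. No cone geometry is used; the only input is $\norm{u_r}_{L^1(\sig)}=1$. In effect, the assumed inequality would force $\norm{\Omega}_{L^1(\sig)}\lesssim\norm{\Omega}_{\Cspace_\alpha}$ for nonnegative $\Omega$, and the normalized balls violate this when $\alpha>(n-1)/2$.

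Your route takes $f=\1_{B_{\R^n}(0,\delta)}$ with $\delta=r/10$ and shows $B_{\R^n}(x,\delta)\subset\Gamma(B(e,r),2\abs x)$ for $x$ in the cone over $B(e,r/2)$ with $\abs x\ge1$. You then balance $\mathcal M_{u_B}f(x)\gtrsim\delta^n\abs x^{-n}r^{-(n-1)}$ against the cone measure $\asymp T^nr^{n-1}$. This checks out: the angle between $y$ and $x$ is at most $\arcsin(\delta/\abs x)<r/2$, $\abs y<2\abs x$, and $T=2$ already suffices. It places the large level set in the far field along the cone, in the spirit of Lemma \ref{lem:cone-box}.

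For the theorem itself this buys nothing extra. The containment you flag as the main obstacle can be avoided entirely. Also, since $\mathcal M_\Omega$ commutes with dilations, the paper's test function can be run at any scale without localizing $f$.
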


\begin{proof}
First suppose that $0<\alpha\le (n-1)/2$. Since all covering radii are at most $1$,
\[
 r^{(n-1)/2}\le r^\alpha,
\]
and hence
\[
 \mathcal H^{(n-1)/2}_1(E)\le\mathcal H^\alpha_1(E)
\]
for every $E$. Applying the level-set definition to every measurable function $\omega=\Omega$ $\sigma$-almost everywhere, and then taking the infimum over all such $\omega$ yields
\[
 \rho_{\Cspace}(\Omega)
 \le\rho_{\Cspace_\alpha}(\Omega),
 \qquad
 \norm{\Omega}_{\Cspace}
 \le\norm{\Omega}_{\Cspace_\alpha}.
\]
The sufficiency thus follows from Theorem \ref{thm:main}.

Now suppose that $(n-1)/2<\alpha\le n-1$. Suppose, for contradiction, that there exists a uniform constant $C_{n,\alpha}$. Fix $\theta_0\in\Sph$ and let
\[
 B_r:=B(\theta_0,r),
 \qquad
 u_r:=\frac{\1_{B_r}}{\sig(B_r)}.
\]
By \eqref{eq:C-alpha-normalized-ball},
\[
 \norm{u_r}_{\Cspace_\alpha}
 \asymp_{n,\alpha}r^{2\alpha-(n-1)}\longrightarrow0
 \qquad(r\downarrow0).
\]

Take $f=\1_{B_{\R^n}(0,2)}$. If $x\in B_{\R^n}(0,1)$, choose $R=1$ in \eqref{eq:rough-maximal}. Then $f(x-y)=1$ for every $\abs y<1$. Hence there exists $c_n>0$ such that
\[
 \mathcal M_{u_r}f(x)
 \ge c_n\int_{\Sph}u_r\dd\sig
 =c_n.
\]
Thus
\[
 B_{\R^n}(0,1)\subset\{x:\mathcal M_{u_r}f(x)>c_n/2\}.
\]
If the uniform estimate held, then
\[
 \abs{B_{\R^n}(0,1)}
 \le\frac{2C_{n,\alpha}}{c_n}
 \norm{u_r}_{\Cspace_\alpha}\norm f_1.
\]
Letting $r\downarrow0$ gives a contradiction.
\end{proof}

\begin{proof}[Proof of Theorem \ref{thm:alpha-main}]
	The conclusion follows directly from Theorem \ref{thm4.4}.
\end{proof}

\section*{Declarations}
\noindent\textbf{Ethical Approval}

The declaration for ethical approval is not applicable.

\noindent\textbf{Competing interests}

The authors declare no competing interest.

\noindent\textbf{Availability of data and materials}

Data sharing is not applicable to this article as no datasets were generated or analysed during the current study.

\bibliographystyle{mathann-num}
\bibliography{ref3}
\end{document}